\newtheorem{theorem}{Theorem}
\newtheorem{lemma}[theorem]{Lemma}
\newtheorem*{lemma*}{Lemma}
\newtheorem{proposition}[theorem]{Proposition}
\newtheorem{corollary}[theorem]{Corollary}
\newtheorem{definition}[theorem]{Definition}
\newtheorem{remark}[theorem]{Remark}
\title[Multiplicative chaos measures for a random model of the Riemann 
zeta function]{Multiplicative chaos measures for a random model of the Riemann 
zeta function}
\date{\today}
\author[E. Saksman]{Eero Saksman}
\address{University of Helsinki, Department of Mathematics and Statistics,
         P.O. Box 68 , FIN-00014 University of Helsinki, Finland}
\email{eero.saksman@helsinki.fi}
\author[C. Webb]{Christian Webb}
\address{Department of mathematics and systems analysis, Aalto University, P.O.
Box 11000, 00076 Aalto, Finland}
\email{christian.webb@aalto.fi}
\keywords{Multiplicative chaos, Riemann zeta function, Gaussian approximation}
\subjclass[2010]{Primary 60G57; Secondary 11M06, 60G15, 11M50}
\thanks{The first author was supported by the Finnish Academy CoE in Analysis and Dynamics Research. The second author was supported by Academy of Finland.}
\newcommand{\Z}{\mathbb{Z}}
\newcommand{\R}{\mathbb{R}}
\newcommand{\C}{\mathbb{C}}
\newcommand{\E}{\mathbb{E}\, }
\newcommand{\Prob}{\mathbb{P}}
\newcommand{\add}{\; \; \underset{{\rm unif}}{\sim}\;\;}
\begin{document}

\begin{abstract}
We prove convergence of a stochastic approximation of powers of the  Riemann $\zeta$ function to a non-Gaussian multiplicative chaos measure, and prove that this measure is a non-trivial multifractal random measure. The results cover both the subcritical and critical chaos. A basic ingredient of the proof is a 'good' Gaussian approximation of the induced random fields  that is potentially of independent interest.
\end{abstract}

\maketitle

\section{Introduction}

The goal of this note is to study the multifractal behavior emerging from the complexity of the distribution of the prime numbers. Our approach is to consider a stochastic approximation to the Riemann $\zeta$ function and study its connection with random multifractal measures known as multiplicative chaos measures. This is strongly motivated by conjectures in \cite{FK} and recent results in \cite{ABH}, where this stochastic approximation to the $\zeta$ function was studied.

The main conjecture of \cite{FK} is that on a suitable scale, the logarithm of the $\zeta$-function on the critical line, far away from the origin, should look roughly like a log-correlated Gaussian field. For rigorous results in this direction, see \cite{Bourgade,BK}, and for further conjectures, see \cite{Ostrovsky}. Motivated by the conjectures in \cite{FK}, the stochastic approximation we consider has recently been studied in \cite{ABH}, where the authors proved that the maximum of this field behaves essentially as the maximum of a log-correlated Gaussian field (see e.g. \cite{DRZ,Madaule} for more on the maximum of log-correlated Gaussian fields). 

Log-correlated Gaussian fields are rough objects - they must be understood as random generalized functions, but as realized already by Kahane, some of their geometric properties can be studied by exponentiating these fields into random measures known as multiplicative chaos measures (see \cite{Kahane} for Kahane's original work, \cite{RV} for a recent review and \cite{Berestycki} for a concise proof of existence and uniqueness). Gaussian multiplicative chaos has recently found applications in two-dimensional quantum gravity \cite{DKRV,DS}, the study of random planar curves through conformal welding \cite{AJKS,Sheffield}, models for asset returns in mathematical finance \cite{BKM}, and random matrix theory \cite{FK,Webb}.

The major difference in our case is that the field is no-longer Gaussian and though some simple non-Gaussian cases have been studied \cite{BM} there is no general theory for studying such an object. Our main goal is to show that a corresponding object exists and it enjoys many of the properties Gaussian multiplicative chaos measures are known to have. 

Our approach is philosophically close to that of \cite{ABH}, but still slightly different. The main idea in their proof is to find a hidden tree structure which governs the main properties (such as the maximum) of the field. We on the other hand will exhibit a log-correlated Gaussian field, which provides a very good approximation of the field. While our calculations rely heavily on the model we study, this approach of Gaussian approximation might be useful for studying other "nearly Gaussian" multiplicative chaos measures.

The structure of this paper is the following:  in the next section we first introduce our model and state the main results. We then move on to proving that the non-Gaussian field can be approximated well by a log-correlated Gaussian one. We do this using a quantitative Gaussian approximation result for sums of random variables, whose proof we postpone to the last section. The approximation enables us to make use of standard Gaussian multiplicative chaos theory to prove that also the non-Gaussian chaos measure exists, both in the subcritical and critical cases. Making use of Gaussian multiplicative chaos theory, we additionally prove a result concerning the multifractal scaling of the non-Gaussian chaos measure. 

\section{The model and main results}

Let us begin by sketching how our model appears. Our discussion will be  imprecise and short. For more information, see \cite{ABH} and \cite[Section 3]{Bourgade}. 

The real question one is interested in is describing the behavior of the  $\zeta$ function on the critical line far away from the origin. This of course is an extremely difficult question so it's natural to try to simplify things. Assuming an Euler product representation for $\zeta$ on the critical line, one would formally have 

\begin{equation}
\notag \log \zeta(it+1/2)=-\sum_{p}\log(1-p^{-it-1/2})=\sum_p \sum_{k=1}^{\infty}\frac{1}{k}p^{-ikt -k/2},
\end{equation}

\noindent where the $p$-sum is over prime numbers. As one has quite fast decay in the summation variable $k$, it is natural to expect that the $k=1$ term would be the dominant part of this sum.  Thus one is lead to looking at the object

\begin{equation}
\notag \sum_{p}\frac{1}{\sqrt{p}}p^{-it}.
\end{equation}

Studying this is still too difficult, so one introduces randomness. We want to consider the behavior of the above object in the vicinity of a generic large point on the critical line. To formalize this, let $u$ be a random variable which is uniformly distributed on $[1,2]$ and let $T$ be a large parameter. Moreover, let $x\in[0,1]$ - so our generic large point is $iuT+1/2$ and we look at points near this, i.e. points corresponding to $t=uT+x$. Consider then the object 

\begin{equation}\label{eq:zetasum}
\sum_p \frac{1}{\sqrt{p}}p^{-ix}p^{-iu T}.
\end{equation}

If one considers only a fixed number of primes, say $p\leq N$ for some $N\in \Z_+$, then as $T\to\infty$,

\begin{equation}
\notag \left(p^{-iuT}\right)_{p\leq N}\stackrel{d}{\to}\left(e^{i\theta_p}\right)_{p\leq N},
\end{equation}
\noindent where $(\theta_p)_{p\leq N}$ are i.i.d. uniformly distributed on $[0,2\pi]$. This follows by observing that if $p_1,\ldots,p_K$ are distinct primes and $r_k$:s are integers, not all equal to zero, then in the limit  all the non-trivial joint moments vanish:
$$
\lim_{T\to\infty}\E \prod_{k=1}^K\big(p_k^{\pm iu T}\big)^{r_k}=\lim_{T\to\infty}\int_1^2\exp \Big(iTu\big(\sum_{k=1}^Kr_k\log p_k\big)\Big)du=0,
$$ 
as $\log(p_k)$:s are independent over rationals.
Now as we're interested in the size of $\zeta$, it's natural to only consider the real part of the logarithm and make the following definition. In order to facilitate definitions later on, we let $p_j$ stand for the $j$:th prime and define

\begin{definition}\label{def:xn}{\rm
Let $(\theta_p)_p$ be i.i.d. random variables that are uniformly distributed on $[0,2\pi]$ and indexed by prime numbers. We denote their law by $\Prob$ and integration with respect to this measure by $\E$. For $N\in \Z_+$ and $x\in[0,1]$ set

\begin{equation}\label{eq:xn}
\notag X_N(x)=\sum_{j=1}^N\frac{1}{\sqrt{p_j}}\left(\cos(x\log p_j)\cos \theta_{p_j}+\sin(x\log p_j)\sin\theta_{p_j}\right).
\end{equation}}
\end{definition}

\begin{remark}
{\rm One can check that as $N\to\infty$, the sequence of functions $(X_N)$ converges almost surely in some suitable Sobolev space of distributions to a non-trivial limit, say $X$, which is an honest random generalised function. A natural question arises, in analogy with random matrix theory (see e.g. \cite{HKO}),  whether the quantity \eqref{eq:zetasum} also  converges to the same limit as $T\to\infty$. However, one easily checks that this quantity does not converge locally in any reasonable Sobolev space of distributions for any fixed $T$. A  more natural way to make a more rigorous link to the $\zeta$-function would  be to study the convergence of suitable (smoothed) cuts of the series that depend on $T$.}
\end{remark}

This object came about when considering the logarithm of the $\zeta$ function, so it's natural to want to exponentiate it. It turns out that the correct way to understand this exponential is to view it as a positive measure. To get a better understanding of the measure, is is customary to add a further parameter that will enable studying the (random) $L^p$ norm of the "density" of the measure (though the limiting measure actually is almost surely not absolutely continuous with respect to the Lebesgue measure). We also need to normalize the measure suitably to obtain a non-trivial limiting object - our choice of normalization is such that the expectation of the total mass of the measure is equal to one - this and the independence of the summands ensures that  the sequences of measures forms a measure-valued martingale, which allows the use of standard limit theorems in order to define the limiting object.

\begin{definition}\label{def:mun}{\rm
For $\beta>0$, we consider the measure 

\begin{equation}
\notag \mu_{\beta,N}(dx)=\frac{e^{\beta X_N(x)}}{\E e^{\beta X_N(x)}}dx
\end{equation}

\noindent on $[0,1]$.}

\end{definition}

By the theory of martingales, the existence of a ${\rm weak}^*$-limit of the sequence
 $\mu_{\beta,N}(dx)$ is easy. However, that the limit is no-trivial is a more delicate issue, and our first main result guarantees that this is the case:

\begin{theorem}\label{th:main}
For $\beta\in(0,\beta_c)$, where $\beta_c=2$, the measure $\mu_{\beta,N}(dx)$ converges almost surely with respect to the weak topology of measures to a non-trivial random measure $\mu_\beta(dx)$. Actually, there is a Gaussian multiplicative chaos-measure $\nu_\beta$ on (0,1) such that $\mu_\beta=f\nu_\beta$, where the random multiplier  function $f$ is almost surely continuous and bounded form above and $\| f\|_{L^\infty (0,1)}$ possesses all moments.  One has  $\E \mu_\beta(0,1)^p<\infty$ for $p<4/\beta^2.$ For $\beta\geq \beta_c$, $\mu_{\beta,N}(dx)$ converges almost surely to the zero measure (with respect to the weak topology of measures).
\end{theorem}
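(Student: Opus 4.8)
\noindent\emph{Plan of proof.} The plan is to transfer all four assertions to the setting of Gaussian multiplicative chaos by exhibiting a sufficiently precise log-correlated Gaussian approximation of $X_N$, and then to invoke standard chaos theory for the Gaussian object.

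First I would record the covariance of the field. Since $X_N(x)=\sum_{j=1}^N p_j^{-1/2}\cos(x\log p_j-\theta_{p_j})$, one has $\E X_N(x)X_N(y)=\tfrac12\sum_{j=1}^N p_j^{-1}\cos((x-y)\log p_j)$; using $\sum_p p^{-1}\cos(t\log p)=\Re\log\zeta(1+it)+O(1)$ near $t=0$ together with the pole of $\zeta$ at $1$, this converges for $x\neq y$ to $\tfrac12\log\tfrac1{|x-y|}+g(x,y)$ with $g$ continuous on $[0,1]^2$, i.e. the limiting kernel is log-correlated. Using the quantitative Gaussian approximation result proved in the final section --- applied after partitioning the primes into blocks indexed by the scale of $\log\log p$ (so that each block contributes $O(1)$ to the variance and is a sum of many independent uniformly bounded summands), coupling each block to a Gaussian, and passing from pointwise to uniform-in-$x$ control by a chaining argument --- I would construct on one probability space the family $(X_N)$ together with a centred Gaussian field $\mathbf{G}$ whose partial sums $\mathbf{G}_N$ have exactly the covariance of $X_N$, in such a way that $\Delta_N:=X_N-\mathbf{G}_N$ converges almost surely uniformly on $[0,1]$ to a continuous function $\Phi$ with $\E\exp\big(q\sup_{[0,1]}|\Phi|\big)<\infty$ for all $q>0$. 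I expect this to be the main obstacle: it requires both the sharp coupling estimate and a verification that $(\mathbf{G}_N)$ is an admissible martingale approximation of a bona fide log-correlated field --- if necessary via a further comparison of this ``prime-frequency'' approximation with a standard $\star$-scale invariant one using Kahane's convexity inequality.

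Granting this, the case $\beta\in(0,\beta_c)$ goes quickly. Writing $\mathbf{G}=\tfrac1{\sqrt2}\widetilde{\mathbf{G}}$ with $\widetilde{\mathbf{G}}$ of covariance $\log\tfrac1{|x-y|}+(\text{continuous})$, the effective chaos parameter is $\gamma=\beta/\sqrt2$, which is subcritical precisely when $\beta<2$; standard Gaussian multiplicative chaos theory then gives that $\nu_{\beta,N}(dx):=e^{\beta\mathbf{G}_N(x)}/\E e^{\beta\mathbf{G}_N(x)}\,dx$ converges almost surely weakly to a non-trivial chaos measure $\nu_\beta$ on $(0,1)$, with $\E\nu_\beta(0,1)^s<\infty$ for all $s<2/\gamma^2=4/\beta^2$. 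Next, a direct computation gives $\E e^{\beta X_N(x)}=\prod_{j\le N}I_0(\beta/\sqrt{p_j})$ and $\E e^{\beta\mathbf{G}_N(x)}=\exp\big(\tfrac{\beta^2}4\sum_{j\le N}p_j^{-1}\big)$, both independent of $x$, and since $\log I_0(\beta/\sqrt{p_j})=\tfrac{\beta^2}{4p_j}+O(p_j^{-2})$ the deterministic ratio $r_N:=\E e^{\beta\mathbf{G}_N}/\E e^{\beta X_N}$ tends to a positive constant $c_\beta^{-1}$. Hence $\mu_{\beta,N}(dx)=r_N\,e^{\beta\Delta_N(x)}\,\nu_{\beta,N}(dx)$, and with $f:=c_\beta^{-1}e^{\beta\Phi}$ one has $r_Ne^{\beta\Delta_N}\to f$ uniformly; testing against a continuous $\phi$ and writing $\int\phi\,d\mu_{\beta,N}=\int(\phi f)\,d\nu_{\beta,N}+\int\phi\,(r_Ne^{\beta\Delta_N}-f)\,d\nu_{\beta,N}$, the first term converges to $\int\phi\,d(f\nu_\beta)$ since $\phi f$ is continuous, and the second is bounded by $\|\phi\|_\infty\,\|r_Ne^{\beta\Delta_N}-f\|_\infty\,\nu_{\beta,N}(0,1)\to0$ almost surely. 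Thus $\mu_{\beta,N}\to\mu_\beta:=f\nu_\beta$ almost surely weakly; by construction $f$ is almost surely continuous, positive and bounded above, and $\|f\|_{L^\infty(0,1)}=c_\beta^{-1}e^{\sup_{[0,1]}\beta\Phi}$ has moments of all orders. Non-triviality of $\mu_\beta$ is inherited from that of $\nu_\beta$ and the positivity of $f$. For the moment bound, fix $p<4/\beta^2$ and pick $q>1$ with $pq<4/\beta^2$; from $\mu_\beta(0,1)\le\|f\|_{L^\infty}\,\nu_\beta(0,1)$ and Hölder,
\[
\E\mu_\beta(0,1)^p\le\big(\E\|f\|_{L^\infty}^{pq/(q-1)}\big)^{(q-1)/q}\big(\E\nu_\beta(0,1)^{pq}\big)^{1/q}<\infty ,
\]
the first factor being finite because $\|f\|_{L^\infty}$ has all moments and the second because $pq<4/\beta^2$.

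Finally, for $\beta\ge\beta_c$ the coupling of the first step is unchanged (it is a statement about the fields, not about $\beta$), so $\|\Delta_N\|_\infty$ is almost surely bounded in $N$ and $r_N$ stays bounded, whence $\mu_{\beta,N}(dx)=r_Ne^{\beta\Delta_N(x)}\nu_{\beta,N}(dx)\le C\,\nu_{\beta,N}(dx)$ for a finite random constant $C$. Since it is classical that the naively normalised Gaussian multiplicative chaos $\nu_{\beta,N}$ converges almost surely to the zero measure in the critical ($\beta=\beta_c$) and supercritical ($\beta>\beta_c$) regimes, it follows that $\mu_{\beta,N}\to0$ almost surely.
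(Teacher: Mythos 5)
Your proposal is correct and follows essentially the same route as the paper: the same decomposition $X_N=G_N+E_N$ obtained from a block-wise Gaussian coupling with exponential moments for the uniform error, the same covariance asymptotics $\tfrac12\log(1/|x-y|)+g$ via $\zeta(1+it)$, and the same transfer to standard Gaussian multiplicative chaos theory (including the H\"older argument for the moment bound and the domination $\mu_{\beta,N}\le C\nu_{\beta,N}$ for $\beta\ge\beta_c$). The only difference is in the final deduction: the paper proves uniform $L^p$-boundedness of the non-Gaussian martingale and invokes martingale convergence, whereas you transfer the almost sure weak convergence of $\nu_{\beta,N}$ directly through the uniformly convergent multiplier $r_Ne^{\beta\Delta_N}\to f$ — which works, and is made clean by your (slightly sharper than Lemma \ref{le:exp}) observation that the normalisation ratio actually converges rather than merely being comparable.
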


Our second main result concerns the more difficult case of $\beta=\beta_c$, i.e. the critical case. Here convergence  to a non-trivial object is obtained if one  normalizes the measure in a slightly different  way:

\begin{theorem}\label{th:critical}
As $N\to\infty$, the measure $\sqrt{\log\log N}\mu_{\beta_C,N}(dx)$ converges in distribution (with respect to the weak topology) to a non-trivial random measure which is also absolutely continuous with respect to a Gaussian multiplicative chaos measure. Moreover, $\E \mu_{\beta_C}(0,1)^p<\infty$ for $p\in (0,1).$
\end{theorem}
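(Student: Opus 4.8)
The plan is to deduce the critical statement from the subcritical structure theorem (Theorem~\ref{th:main}) together with the existing theory of \emph{critical} Gaussian multiplicative chaos, the bridge being the Gaussian approximation that the abstract advertises as a basic ingredient. That approximation should be carried out at the level of the fields, and the coupling it produces is independent of $\beta$: I would set things up so that there is a standard mollified log-correlated Gaussian field $G$ on $(0,1)$ (say $\star$-scale invariant), scales $T_N=\log\log N+\mathcal{O}(1)$, and a coupling under which $X_N=G_{T_N}+\Delta_N$ with the error field $\Delta_N$ converging, uniformly on $[0,1]$ and almost surely, to an almost surely continuous function $\Delta$ — the same mechanism that yields the continuous multiplier $f$ in Theorem~\ref{th:main}. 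Since the normalising constants $\E e^{\beta_c X_N}$ and $\E e^{\beta_c G_{T_N}}$ are $x$-independent, one then has
\begin{equation}
\notag
\sqrt{\log\log N}\,\mu_{\beta_c,N}(dx)=f_N(x)\,\sqrt{\log\log N}\,\widetilde\nu_{\beta_c,N}(dx),\qquad \widetilde\nu_{\beta_c,N}(dx):=\frac{e^{\beta_c G_{T_N}(x)}}{\E e^{\beta_c G_{T_N}(x)}}\,dx,
\end{equation}
with $f_N(x)=r_N\,e^{\beta_c\Delta_N(x)}$ and $r_N=\E e^{\beta_c G_{T_N}}/\E e^{\beta_c X_N}$.

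First I would show $r_N\to r_\infty\in(0,\infty)$: indeed $\E e^{\beta_c X_N}=\prod_{j\le N}I_0(\beta_c/\sqrt{p_j})$ while $\E e^{\beta_c G_{T_N}}=\exp(\tfrac{\beta_c^2}{4}\sum_{j\le N}1/p_j+\mathcal{O}(1))$, and $\log I_0(z)=z^2/4+\mathcal{O}(z^4)$, so the two logarithms differ by a convergent series. Combined with $\Delta_N\to\Delta$ uniformly a.s., this gives $f_N\to f:=r_\infty e^{\beta_c\Delta}$ uniformly on $[0,1]$, almost surely; in particular $f$ is a.s. continuous, strictly positive, bounded, and $\|f\|_{L^\infty(0,1)}$ possesses all moments, exactly as in Theorem~\ref{th:main}. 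Next I would invoke critical Gaussian chaos theory for the \emph{standard} field $G$: in the renormalisation by $\sqrt{T_N}$ one has $\sqrt{T_N}\,\widetilde\nu_{\beta_c,N}\to c\,\nu'_{\beta_c}$ in probability for the weak topology of measures, where $\nu'_{\beta_c}$ is the critical (derivative martingale) chaos measure of $G$ and $c>0$ is deterministic — convergence of the renormalised critical chaos for such approximations being due to Duplantier--Rhodes--Sheffield, with the robustness/uniqueness needed to handle the present approximation scheme supplied by Junnila--Saksman (and Aru--Powell--Sepúlveda). Since $\sqrt{T_N}/\sqrt{\log\log N}\to1$, and since uniform a.s. convergence of $f_N$ together with convergence in probability of the measures gives joint convergence, testing against a fixed continuous $\varphi$ yields $\int\varphi\,f_N\,d(\sqrt{\log\log N}\,\widetilde\nu_{\beta_c,N})\to c\int\varphi\,f\,d\nu'_{\beta_c}$. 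Hence $\sqrt{\log\log N}\,\mu_{\beta_c,N}$ converges in distribution to $\mu_{\beta_c}:=c\,f\,\nu'_{\beta_c}$, which is absolutely continuous with respect to the Gaussian critical chaos $\nu'_{\beta_c}$ and is non-trivial because $f$ is a.s. bounded below by a positive constant on the compact $[0,1]$ while $\nu'_{\beta_c}(0,1)\in(0,\infty)$ almost surely.

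For the moment bound I would use $\mu_{\beta_c}(0,1)=c\int_0^1 f\,d\nu'_{\beta_c}\le c\,\|f\|_{L^\infty(0,1)}\,\nu'_{\beta_c}(0,1)$. Given $p\in(0,1)$, choose $q>1$ with $pq<1$ and conjugate exponent $q'$; Hölder's inequality then gives $\E\mu_{\beta_c}(0,1)^p\le c^p\big(\E\|f\|_{L^\infty(0,1)}^{pq'}\big)^{1/q'}\big(\E\nu'_{\beta_c}(0,1)^{pq}\big)^{1/q}$, and the right-hand side is finite: the first factor because $\|f\|_{L^\infty(0,1)}$ has all moments, the second because the critical Gaussian chaos $\nu'_{\beta_c}(0,1)$ is known to have finite moments of all orders in $(0,1)$.

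The main obstacle is that the martingale softness which made Theorem~\ref{th:main} comparatively easy is gone: at $\beta=\beta_c$ the total masses $\mu_{\beta_c,N}(0,1)$ are no longer uniformly integrable and converge to $0$, so one must work with the $\sqrt{\log\log N}$-renormalised family and can only hope for convergence in law. This places real weight on the quality of the Gaussian approximation — one genuinely needs $\Delta_N$ (and $r_N$) to converge \emph{uniformly}, not merely in a Sobolev norm — and it forces $G$ to be chosen so that it simultaneously approximates $X_N$ well and belongs to a class for which convergence of the renormalised critical chaos is available; reconciling these two demands, very possibly through an extra interpolation step between the prime partial-sum Gaussian field and a genuinely mollified log-correlated field, is where most of the work will lie. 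A secondary, more technical point is the bookkeeping needed to multiply a uniformly-but-only-almost-surely convergent multiplier against a sequence of measures that converges only in probability, which calls for a Slutsky-type statement for random measures.
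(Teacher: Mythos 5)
Your overall strategy is the paper's: approximate $X_N$ by a Gaussian field plus a uniformly convergent error, transfer the known convergence of a renormalised critical Gaussian chaos to the approximating field via the comparison/uniqueness machinery of Junnila--Saksman, and then reinstate the multiplier. The normalisation-ratio computation via $\log I_0(z)=z^2/4+\mathcal{O}(z^4)$ is exactly the paper's Lemma \ref{le:exp}, and your H\"older argument for the moment bound is sound. However, the two points you defer are where essentially all of the paper's work in this section lies, and your sketch of the second one contains an unjustified step.

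First, the coupling $X_N=G_{T_N}+\Delta_N$ with $G_{T_N}$ a genuinely $\star$-scale invariant (or white-noise mollified) field does not come out of the Gaussian approximation: Theorem \ref{th: gaussian_appro} produces the prime-sum field $G_N(x)=\sum_{j\le N}(2p_j)^{-1/2}(W^{(1)}_j\cos(x\log p_j)+W^{(2)}_j\sin(x\log p_j))$, which is not in any class for which convergence of the renormalised critical chaos is known. The paper bridges this by a four-step chain $G_N\to G_{N,1}\to G_{N,2}\to G_{N,3}\to G_{N,4}$ (replacing $p_j$ by $\mathrm{Li}^{-1}(j)$, discrete Gaussians by Wiener integrals, renormalising, and replacing $1/\sqrt{s}$ by $\sqrt{\widehat C(s)/\pi}$), each step contributing a further a.s.\ uniformly convergent \emph{random Gaussian} error, and only then compares covariances of $G_{N,4}$ with the Bacry--Muzy reference field $\widetilde G_t$ at $t=\log\log\mathrm{Li}^{-1}(N+1)$ so that \cite[Theorem 1.1]{JS} applies. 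You flag this as "where most of the work will lie" but do not carry it out. Second, and more seriously, \cite[Theorem 1.1]{JS} only transfers convergence \emph{in distribution} from $\widetilde G_t$ to the approximating field (the paper explicitly leaves convergence in probability open), and the multiplier $f_N$ — which now contains the interpolation errors $F_1,\dots,F_4$, built from the very same Gaussians as the measure — is \emph{not} independent of $\widetilde\nu_{\beta_c,N}$. So the situation is not "a.s.\ convergent multiplier times a measure converging in probability", and no Slutsky-type lemma applies directly. The paper's resolution is the decomposition $e^{f_M-f_N}\mu_{\beta_c,N}=\bigl(e^{f_M+\beta_c G_{M,4}-\frac{\beta_c^2}{2}\E G_{M,4}^2}\bigr)\nu_{\beta_c,M,N}$, in which the prefactor \emph{is} independent of $\nu_{\beta_c,M,N}$ by construction; one then uses \cite[Lemma 4.2(ii)]{JS} and conditioning to pass to the limit in $N$ for fixed $M$, and finally lets $M\to\infty$ using $\sup_{N\ge M}\|f_N-f_M\|_{L^\infty(0,1)}\to 0$ in probability. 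Without this (or an equivalent device), your final limiting step does not go through as written.
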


These two results parallel very closely the type of behavior one has for Gaussian multiplicative chaos measures. 
Actually, they even imply   that in a suitable 'mesoscopic' scaling the approximating measures converge to an actual Gaussian multiplicative chaos measure, modified just by a scalar random multiplicative factor, see Remark \ref{rem:mesoscopic} below. 

There are a couple of issues  that one would expect to hold from the close relationship to the Gaussian case, but we do not touch on them in the present note. First of all we expect that in Theorem \ref{th:critical}, the convergence is not just in distribution, but also actually in probability -- in fact, this probably follows simply by slightly modifying some of the results in \cite{JS}. Moreover, it seems possible that applying our Gaussian approximation result one could obtain for $\beta>\beta_c$ another deterministic normalization under which the measures would converge (this time only in distribution) to a non-trivial limiting object and this this limiting object is a purely atomic measure. This is known as a freezing transition in the framework of physics of disordered systems, and is believed to be a universal phenomenon - see \cite{CLD}. 
Moreover, we suspect that it might be possible to prove that under a suitable deterministic shift, $\max_x X_N(x)$ converges in law to a non-trivial random object, whose distribution can be represented in terms of the critical measure as for log-correlated Gaussian fields. Our approach of expressing $X_N$ in terms of a Gaussian field means that the difficulty in proving all of these claims is in proving the corresponding result for the Gaussian field. While such results are known for some approximations of log-correlated Gaussian fields, the current knowledge is not sufficient to cover our case. For more information about these statements, see e.g. \cite{RV,DRSV1,DRSV2,MRV,DRZ,Madaule}.

A fundamental property of Gaussian multiplicative chaos measures is multifractality - or that the measure can't be described simply with a single scaling dimension, but needs a whole spectrum of them. There are different ways to make precise sense of this (in particular, in the theory of Gaussian multiplicative chaos, there are results about the so called KPZ-scaling of the dimension of the measure - see \cite{DS,RV2}), but we present the following simple result describing the non-trivial scaling of the (subcritical) measure.

\begin{proposition}\label{prop:multif}
Let $\beta<\beta_c$. Then there exists a $q_c=q_c(\beta)>1$ such that for $q\in(0,q_c)$ and any $x\in (0,1)$

\begin{equation*}
\lim_{r\to 0}\frac{\log\E(\mu_\beta(B(x,r))^{q})}{\log r}=\left(1+\frac{\beta^{2}}{2}\right)q-\frac{\beta^{2}}{2}q^{2}.
\end{equation*}
\end{proposition}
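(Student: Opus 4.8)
The plan is to transfer the statement to the Gaussian multiplicative chaos $\nu_\beta$ supplied by Theorem~\ref{th:main}, where the corresponding multifractal scaling is classical, the only genuinely new work being the control of the random multiplier $f$. Throughout write $B=B(x,r)$, and recall from Theorem~\ref{th:main} that $\mu_\beta=f\nu_\beta$ on $(0,1)$ with $f$ almost surely positive and continuous and $\|f\|_{L^\infty(0,1)}$ possessing moments of all orders; I would also use — an input not included in the statement of Theorem~\ref{th:main}, whose verification will be the main point, see below — that $\|1/f\|_{L^\infty(K)}$ possesses moments of all orders for every compact $K\subset(0,1)$. Let $\xi_\beta(s)$ denote the right-hand side of the claimed identity; this is the $L^s$-scaling exponent of $\nu_\beta$, so standard multiplicative chaos theory (see e.g.\ \cite{RV}) yields, uniformly for small $r$ and with constants $c_s,C_s>0$,
\begin{equation*}
c_s\,r^{\xi_\beta(s)}\ \le\ \E\big(\nu_\beta(B(x,r))^s\big)\ \le\ C_s\,r^{\xi_\beta(s)},\qquad 0<s<\tfrac{4}{\beta^2},
\end{equation*}
where $4/\beta^2$ is the critical moment of $\nu_\beta$. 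Since $\beta<\beta_c=2$, we may take $q_c(\beta)=4/\beta^2>1$, and it suffices to show, for $q\in(0,q_c)$, that $\liminf_{r\to0}\frac{\log\E\mu_\beta(B(x,r))^q}{\log r}\ge\xi_\beta(q)$ together with the matching bound for $\limsup$.

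For the first inequality I would start from $\mu_\beta(B)\le\|f\|_{L^\infty(0,1)}\,\nu_\beta(B)$, raise to the power $q$, take expectations, and apply H\"older's inequality with an exponent $p>1$ and its conjugate $p'$:
\begin{equation*}
\E\mu_\beta(B)^q\ \le\ \big(\E\|f\|_{L^\infty(0,1)}^{\,qp'}\big)^{1/p'}\big(\E\nu_\beta(B)^{qp}\big)^{1/p}\ \le\ C_{q,p}\,r^{\,\xi_\beta(qp)/p},
\end{equation*}
valid whenever $qp<4/\beta^2$, the first factor being finite because $\|f\|_{L^\infty(0,1)}$ has all moments. Since $\log r<0$, this gives $\liminf_{r\to0}\frac{\log\E\mu_\beta(B)^q}{\log r}\ge\xi_\beta(qp)/p$ for every admissible $p$, and letting $p\downarrow1$, using continuity of $\xi_\beta$, produces $\liminf\ge\xi_\beta(q)$.

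The reverse bound is the delicate point, precisely because $f$ is controlled only from above. Fix $r_0>0$ with $\overline{B(x,r_0)}\subset(0,1)$ and put $Z=\inf_{B(x,r_0)}f$, which is almost surely positive and, by the input recalled above, has finite negative moments of all orders. For $r\le r_0$ one has $\mu_\beta(B(x,r))\ge Z\,\nu_\beta(B(x,r))$, hence $\nu_\beta(B)^s\le Z^{-s}\mu_\beta(B)^s$, and H\"older's inequality gives
\begin{equation*}
\E\nu_\beta(B)^s\ \le\ \big(\E Z^{-sp'}\big)^{1/p'}\big(\E\mu_\beta(B)^{sp}\big)^{1/p},
\end{equation*}
so that $\E\mu_\beta(B)^{sp}\ge c_{s,p}\big(\E\nu_\beta(B)^s\big)^{p}\ge c'_{s,p}\,r^{\,p\,\xi_\beta(s)}$ for $0<s<4/\beta^2$. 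Writing $q=sp$ and dividing by $\log r<0$, one obtains $\limsup_{r\to0}\frac{\log\E\mu_\beta(B)^q}{\log r}\le p\,\xi_\beta(q/p)$ for every $p>1$ with $q/p<4/\beta^2$; letting $p\downarrow1$ yields $\limsup\le\xi_\beta(q)$. Combined with the previous paragraph, this shows that $\frac{\log\E\mu_\beta(B(x,r))^q}{\log r}$ converges to $\xi_\beta(q)$, which is the asserted formula.

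The main obstacle is exactly the lower bound just sketched, i.e.\ upgrading the one-sided information on $f$ to a two-sided bound with all moments on compact subintervals. This requires returning to the proof of Theorem~\ref{th:main}, where $f$ appears as a deterministic multiple of $e^{\beta\Delta}$ for a continuous random function $\Delta$ measuring the discrepancy between $X_N$ and its log-correlated Gaussian surrogate: one must check that not only $\sup_K\Delta$ but also $\sup_K(-\Delta)$ has finite exponential moments of every order, which should follow from the same quantitative Gaussian approximation that bounds $\|f\|_{L^\infty(0,1)}$. A secondary, routine matter is to ensure the two-sided chaos estimate above is quoted on the full range $0<s<4/\beta^2$, including $s\ge1$.
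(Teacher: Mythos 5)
Your proof is correct and follows essentially the same route as the paper: both directions are obtained by sandwiching $\mu_\beta$ between (random) constant multiples of $\nu_\beta$ and applying H\"older with an exponent tending to $1$ — the paper phrases the lower bound via the reverse H\"older inequality, which is the same computation as your application of H\"older to $\E\nu_\beta(B)^s$. The two-sided moment control on the multiplier that you single out as the main remaining obstacle is in fact already available: Theorem \ref{th:g-approx} gives $\E\exp\big(\lambda\sup_{N\geq 1}\|E_N\|_{L^{\infty}(0,1)}\big)<\infty$ for all $\lambda>0$, i.e.\ the discrepancy is controlled in absolute value, so $1/f$ (like $f$) has moments of all orders on all of $(0,1)$ and no further work is needed.
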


Let us finally briefly outline our approach to proving convergence of $\mu_{N,\beta}$. As in Kahane's original theory, $\mu_{\beta,N}$ is a measure valued martingale - in particular, for each continuous function $f:[0,1]\to \R$, $\mu_{\beta,N}(f)$ is a martingale. So to prove convergence to a non-trivial object, for $\beta<\beta_c$ it is enough for us to demonstrate that this martingale is uniformly integrable. As in the Gaussian case, we'll prove that for $\beta<\beta_c$, the martingale is bounded in $L^p$ for some $p>1$. This will in fact follow from the our representation of $\mu_{\beta,N}$ being absolutely continuous with respect to an approximation of a Gaussian chaos measure and the Radon-Nikodym derivative being very well behaved. For the  case $\beta=\beta_c$ we need to be much more careful in  choosing the approximative Gaussian field, but after that the result is obtained by  applying uniqueness results for critical Gaussian chaos contained in \cite{JS}.

The  Gaussian approximation we need is contained in the following:

\begin{theorem}\label{th: gaussian_appro}  For each $N\geq 1$ there is the decomposition
\begin{equation}
\notag X_N(x)=G_N(x)+E_N(x),
\end{equation}
where $G_N(x)$ is a Gaussian field that has the covariance structure of a standard smooth approximation to a log-correlated field $($see Lemma
\ref{le:covariance_approximation} below$)$, and $E_N$ is continuous and converges a.s. uniformly to continuous function $E$. Moreover, the maximal error in the approximation has finite exponential moments:
$$
\E \exp\big( \lambda \sup_{N\geq 1, x\in [0,1]}E_N(x)\big) \; <\; \infty \quad {\rm for\;  all} \;\; \lambda >0.
$$
\end{theorem}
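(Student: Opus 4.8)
The plan is to carry out the coupling block by block over the primes. Rewriting $\cos(x\log p_j)\cos\theta_{p_j}+\sin(x\log p_j)\sin\theta_{p_j}=\cos(\theta_{p_j}-x\log p_j)$ one finds $\E[X_N(x)X_N(y)]=\sum_{j=1}^N\frac{\cos((x-y)\log p_j)}{2p_j}$, so by Lemma~\ref{le:covariance_approximation} this sum is already exactly the covariance of a standard smooth log-correlated approximation, and I would build $G_N$ to carry precisely this covariance (say $G_N(x)=\sum_{j\le N}\frac{1}{\sqrt{2p_j}}(A_j\cos(x\log p_j)+B_j\sin(x\log p_j))$ in law, with $A_j,B_j$ i.i.d.\ standard Gaussian). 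One cannot produce the coupling term by term: the least possible $L^2$ cost of coupling a single uniform phase to the corresponding Gaussian summand is of order $p_j^{-1}$, and $\sum_j p_j^{-1}=\infty$, so the error of a term-by-term coupling would have divergent variance. The remedy is to couple the accumulated contribution of a whole block of primes at once, using that such a contribution is a sum of enormously many tiny independent pieces and hence, by a quantitative central limit theorem, extremely close to a Gaussian process of the same covariance.

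Partition the primes into consecutive blocks $\mathcal J_k$ determined by $\log\log p_j\in(k-1,k]$, so that $\sum_{j\in\mathcal J_k}p_j^{-1}\asymp1$ by Mertens' theorem, every frequency occurring in block $k$ is at most $\Omega_k:=e^k$, and every summand in block $k$ has sup-norm at most $\epsilon_k:=e^{-e^{k-1}/2}$, which is super-exponentially small in $k$; consequently the Lyapunov-type quantity $\sum_{j\in\mathcal J_k}p_j^{-3/2}$ is also $\lesssim\epsilon_k$. Write $\Delta_k(x)$ for the contribution of block $k$ to $X_N$; the $(\Delta_k)_k$ are independent, and each $\Delta_k$, together with all its intermediate partial sums, is a trigonometric sum with frequencies $\le\Omega_k$.

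The core step is, for each $k$ separately (and then on the product space, so that the blocks stay independent), to couple $\Delta_k$ with a Gaussian process $\Gamma_k$ of the same covariance so that the maximal error $D_k$ over all intermediate partial sums of the block and over $x\in[0,1]$ satisfies $\E e^{\lambda D_k}\le 1+e^{-ce^k}$ for $k$ large, for every fixed $\lambda>0$. To get this one restricts to an equally spaced grid $\mathcal G_k$ of $M_k\asymp\Omega_k\asymp e^k$ points, invokes the multivariate quantitative Gaussian approximation theorem of the last section on $\R^{M_k}$ --- for which the relevant Berry--Esseen quantity is $\sum_{j\in\mathcal J_k}\E\|\xi_j\|_2^3\lesssim M_k^{3/2}\sum_{j\in\mathcal J_k}p_j^{-3/2}\lesssim M_k^{3/2}\epsilon_k=e^{-c'e^k}$, which is super-exponentially small and therefore swamps the merely polynomial-in-$M_k$ dimension loss --- and finally passes from the grid back to $C[0,1]$ via a Marcinkiewicz--Zygmund / large sieve type inequality, which for a trigonometric sum of frequency $\le\Omega_k$ bounds the sup-norm by a constant times the maximum over $\mathcal G_k$ (if the CLT only matches the covariance approximately, composing with a Gaussian--Gaussian coupling makes $\Gamma_k$ carry exactly the covariance of $\Delta_k$ at the price of an additional super-exponentially small error). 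Setting $G_N:=\sum\Gamma_k$ over the completed blocks, plus the coupled part of the unique straddling block, produces a Gaussian field with covariance $\sum_{j\le N}\frac{\cos((x-y)\log p_j)}{2p_j}$ and $E_N=X_N-G_N=\sum_k(\Delta_k-\Gamma_k)$ up to a straddling remainder.

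The conclusions now follow from the block bounds. Since $\E D_k\lesssim e^{-ce^k}$ is summable, Borel--Cantelli gives $\sum_k D_k<\infty$ almost surely, whence $E_N$ converges uniformly to the continuous function $E=\sum_k(\Delta_k-\Gamma_k)$; and since $\sup_{N\ge1,\,x\in[0,1]}E_N(x)\le C\sum_k D_k$ with the $D_k$ independent, $\E\exp(\lambda\sup_{N,x}E_N(x))\le\prod_k\E e^{\lambda CD_k}<\infty$ for every $\lambda>0$, the product converging because all but finitely many factors are $\le1+e^{-ce^k}$; applying the same reasoning to $-E_N$ yields the bound for $\sup|E_N|$ as well. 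The main obstacle is precisely the core step: proving the multivariate quantitative CLT for a single block with an error term that is simultaneously uniform in $x\in[0,1]$, uniform over the intermediate partial sums, and exponentially integrable at a super-exponentially small scale --- this is the delicate work deferred to the last section --- together with the (more routine but still necessary) large-sieve input needed to control the sup-norm of a non-harmonic exponential sum by its values on a finite grid.
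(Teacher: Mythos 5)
Your overall architecture (block the primes, couple each block with a Gaussian, sum super\nobreakdash-exponentially small errors) is the right one, and your observation that a term-by-term coupling has divergent cost is exactly the motivation behind the paper's construction. But the ``core step'', as you state it, hides the real difficulty and is not supported by the quantitative CLT that the paper (or the standard literature) supplies. You propose to couple each block \emph{as a process in $x$} by discretizing to a grid of $M_k\asymp e^k$ points and invoking a multivariate Gaussian coupling whose error is the Lyapunov sum times only a polynomial factor in the dimension. The coupling result actually proved in the last section (Proposition \ref{le2}) gives $\E|V|\lesssim n^{-\beta(d)}$ with an exponent $\beta(d)\asymp d^{-1}$ that \emph{degrades} with the dimension; with $d=M_k\asymp e^k$ and $n=|\mathcal J_k|\asymp e^{e^k}/e^k$ this yields $n^{-ce^{-k}}\asymp e^{-c}$, a constant, not anything summable. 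Polynomial-in-dimension multivariate Berry--Esseen bounds do exist, but they control distances over convex sets, not the Wasserstein/coupling distance with exponential moments on the error that you need; moreover the grid covariance of a block is nearly singular in the directions corresponding to neighbouring grid points, which is precisely the degenerate-eigenvalue regime that forces the costly case analysis in the proof of Proposition \ref{le2}. On top of this, because your blocks carry $\sum_{j\in\mathcal J_k}p_j^{-1}\asymp1$, the intermediate partial sums inside a block are of order one, so you genuinely need a KMT-type strong approximation of the whole two-parameter partial-sum process --- yet another level of difficulty that the deferred section does not address.

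The paper circumvents all of this with one extra idea. It takes much finer blocks ($r_m=\lfloor e^{m^\alpha}\rfloor$ with $\alpha<2/5$), so that within a block the frequencies $\log p_k$ are essentially constant; one can then \emph{freeze} the $x$-dependence at $\log p_{r_m}$, which collapses each block to a single $\R^2$-valued sum $(C_m,S_m)$, and Proposition \ref{le2} is applied only in fixed dimension $d=2$ where its constants are harmless. The freezing error is estimated separately (Lemma \ref{le:E2}) via the embedding of $W^{1,2}(0,1)$ into $L^\infty$, Azuma's inequality and the prime number theorem, and the smallness of each block's $L^2$ mass lets L\'evy's inequality absorb the intermediate partial sums (Theorem \ref{th:g-approx}). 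To rescue your route you would have to either prove the high-dimensional coupling with genuinely polynomial dimension dependence, uniformly over partial sums and with exponential integrability, or reintroduce some form of the freezing reduction --- at which point you have essentially reproduced the paper's argument.
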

The idea behind the Gaussian approximation is simply that in definition \eqref{eq:xn} we may divide the sum into suitable blocks and use the slowly varying nature of $p\to\log p$ to 'freeze' the $x$-dependence inside each block, and obtain a Gaussian approximation by a simple coupling argument. The basic input from number theory needed is the prime number theorem with a good bound for the error term.

In what follows, for the sake of non-initiated reader we have not striven for a condensed exposition but instead attempt to provide full details  even for the somewhat repetitive  parts of the argument.

\section{A Gaussian approximation for the field}
The goal of this section is to prove that we can indeed write $X_N(x)=G_N(x)+E_N(x)$, where $G_N$ converges to a log-correlated Gaussian field, and $E_N$ converges to a continuous function. This will be carried out in steps. First we'll prove things along a suitable subsequence of $N$s making use of a Gaussian approximation theorem for sums of independent random variables, and later extend the result to all $N$.

 As mentioned earlier, we'll want to split the field into a sum over blocks, where within the blocks, the quantities $\log p$ are roughly constant, and perform a Gaussian approximation on each block separately. To make this formal, let $\mathbf{P}=\lbrace p_1,p_2,..\rbrace$ be the set of primes (indexed in increasing order) and let $(r_k)_{k=1}^{\infty}$ be a sequence of strictly increasing  positive integers with $r_1=1$. The idea is that $\lbrace p_{r_k},...,p_{r_{k+1}-1}\rbrace$ will be the set of primes appearing in the block we've mentioned. 

Later on, we'll discuss what we precisely require of the sequence $r_k$, but for now we note that if we want some kind of central limit theorem to take effect within a block we need $r_{k+1}-r_k\to \infty$ as $k\to\infty$. On the other hand, to have $\log p_{r_k}\approx \log p_{r_{k+1}}$, we'll want (by the prime number theorem) $r_{k+1}/r_k\to 1$ as $k\to\infty$. To apply our Gaussian approximation result - Proposition \ref{le2} - without much further calculations, we also assume that $p_{r_{m+1}-1}/p_{r_m}\leq 2$. Let us further assume that $r_{m+1}-r_m>1$ for all $m$.

We then define the "blocks" of the field as well as our "freezing approximation".

\begin{definition}\label{def:blocks}{\rm
For $(r_m)_{m=1}^\infty$ as above, define for $x\in[0,1]$ and $m\geq 1$:

\begin{equation}\label{eq:ym}
Y_m(x)=\sum_{k=r_m}^{r_{m+1}-1}\frac{1}{\sqrt{p_k}}\left(\cos(x\log p_k)\cos \theta_{p_k}+\sin(x\log p_k)\sin\theta_{p_k}\right).
\end{equation}

Consider also the approximation to this where the $x$-dependent terms within each block are frozen:

\begin{align}\label{eq:ymtil}
\widetilde{Y}_m(x)&=\cos(x\log p_{r_m})\sum_{k=r_m}^{r_{m+1}-1}\frac{1}{\sqrt{p_k}}\cos \theta_{p_k}+\sin(x\log p_{r_m})\sum_{k=r_m}^{r_{m+1}-1}\frac{1}{\sqrt{p_k}}\sin \theta_{p_k}\\
\notag &=:\cos(x\log p_{r_m}) C_m+\sin(x\log p_{r_m})S_m.
\end{align}}

\end{definition}

The Gaussian approximation needed will be based on the following result. We state it in a slightly more general form than we actually need here, since this turns out to be useful in further study of non-Gaussian chaos models \cite{J}.

\begin{proposition}\label{le2}  \quad{\bf (i)}\quad Assume that $d\geq 2$ and $H_j=(H_j^{(1)},\ldots H_j^{(d)})$, $j\in\{1,\ldots, n\}$ are independent and symmetric $\R^d$-valued random variables with $b_0^{-1}\leq c_j:=d^{-1}{\rm Tr\,}({\rm Cov\,}(H_j))\leq b_0$ for all   $j\in\{1,\ldots, n\}$, where $b_0>0$.    Assume also that the following uniform exponential bound holds for some $b_1,b_2>0$:
\begin{equation}\label{eq:exp}
\E \exp(b_1|H_k|)\leq b_2\quad\textrm{for all}\quad k=1,\ldots , n.
\end{equation}
Then there is a $d$-dimensional  Gaussian random variable $U$  with 
$$
{\rm Cov\,}(U)=\big(\sum_{j=1}^n c_j\big)^{-1}\big(\sum_{j=1}^n {\rm Cov\,}(H_j) \big),\qquad {\rm Tr \,} ({\rm Cov\,}(U))=d,
$$
and such that  the difference
$$
V:=U-\big(\sum_{j=1}^n c_j\big)^{-1/2}\big(\sum_{j=1}^n H_j \big)
$$
satisfies
\begin{equation}\label{e30}
\E |V|\leq a_1n^{-\beta}.
\end{equation}
Above $\beta =\beta(d)>0$ depends only on the dimension and
$a_1$ on $d, b_0,b_1,b_2$. Moreover, $U$ can be chosen to be measurable with respect to $\sigma (G, H_1,\ldots H_n),$
where $G$ is a d-dimensional standard Gaussian independent of the $H_j$:s. In addition, there is the exponential estimate
\begin{equation}\label{e32}
\E \exp(\lambda |V|)\leq 1+ a_2 e^{a_3 \lambda^2}n^{-a_4}\qquad\textrm{for}\quad 0\leq\lambda\leq a_5n^{1/2},
\end{equation} 
where the constants $a_2,a_3,a_4, a_5>0$ depend only on $b_0, b_1, b_2$ and the dimension $d.$ Here $a_2\in (0,1/2).$

In the case where the variables $H_k$ are uniformly bounded, say $|H_k|\leq b_3$ for all $k,$ then \eqref{e32}  holds true for all $\lambda >0$, where now the constants $a_2,a_3,a_4$ may   also depend on $b_3, $ and there are   constants $a_6,a_7,a_8>0$ that  depend only on $b_0, b_1,b_2,b_3,d$ so that
\begin{equation}\label{e31}
\E \exp(a_6|V|^2)\leq 1+a_7n^{-a_8},
\end{equation}

\smallskip

\noindent {\bf (ii)}\quad If we assume that $Cov(H_j)=c_jd^{-1}I$, where $I$ the the $d\times d$ identity matrix, and the dimension $d\geq 1$ is arbitrary, then the conclusion \eqref{e30} can be strengthened to 
\begin{equation}\label{e30'}
\E |V|\leq a_1\log(n)^{d+1}n^{-1/2}.
\end{equation}

\end{proposition}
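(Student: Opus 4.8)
The statement is a quantitative multivariate central limit theorem expressed as a coupling (a strong-approximation/Berry--Esseen estimate), and I would prove it along the classical lines: smooth, compare in a weak metric, couple, and then correct the smoothing. Throughout put $t_n=\sum_{j\le n}c_j\asymp n$, $W_n=t_n^{-1/2}\sum_{j\le n}H_j$ and $\Sigma_n=\mathrm{Cov}(W_n)=t_n^{-1}\sum_j\mathrm{Cov}(H_j)$, so that $\mathrm{Tr}\,\Sigma_n=d$, which is exactly the prescribed covariance and trace of $U$. The hypothesis \eqref{eq:exp} will be used only through two elementary consequences: every polynomial moment of the $H_j$ is bounded, in particular $\sum_j\E|H_j|^3=O(n)$; and, by a Bernstein/truncation argument, $|W_n|$ is sub-Gaussian with a variance proxy depending only on $d$, and more generally $\langle W_n,v\rangle$ is sub-Gaussian on the range $|s|\le c\,n^{1/2}$ with proxy at most $\langle\Sigma_n v,v\rangle\le d$.

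For part (i) I would split $\R^d=V_{\ge}\oplus V_{<}$ into the spectral subspaces of $\Sigma_n$ above and below a threshold $\tau$ (a small negative power of $n$). On $V_\ge$, where $\Sigma_n$ is invertible with condition number $O(d/\tau)$, run the main construction: mollify $W_n^{\mathrm{mol}}=P_\ge W_n+\delta G'$ by an independent Gaussian of scale $\delta\ll\sqrt\tau$; bound $d_{\mathrm{TV}}\bigl(W_n^{\mathrm{mol}},\,N(0,P_\ge\Sigma_nP_\ge+\delta^2 I)\bigr)\lesssim\delta^{-3}n^{-1/2}$ by applying the Lindeberg/Stein swapping estimate for $C^3$-test-functions to $h*\varphi_\delta$ (whose third derivatives are $O(\delta^{-3})$) and using $\sum_j\E|H_j|^3=O(n)$; produce, by maximal coupling, a variable $\eqlaw N(0,P_\ge\Sigma_nP_\ge+\delta^2 I)$ that is a function of $W_n^{\mathrm{mol}}$ and extra randomness and agrees with it off an event of probability $\lesssim\delta^{-3}n^{-1/2}$; and finally undo the mollification by Gaussian conditional regression, multiplying by $\Sigma_n(\Sigma_n+\delta^2 I)^{-1}$ and adding an independent correction, to land exactly on $N(0,P_\ge\Sigma_nP_\ge)$. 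On $V_{<}$, where the total variance is $\le d\tau$ and the target Gaussian is nearly a point mass, I would instead use a (multivariate) quantile/Tusn\'ady-type coupling of $P_<W_n$ with $N(0,P_<\Sigma_nP_<)$ --- so the joint law is exactly right --- whose error is controlled by the dimension-free $W_1$-CLT bound $W_1\lesssim n^{-1/2}$ (again fed by $\sum_j\E|H_j|^3=O(n)$) together with the sub-Gaussian tails of $\langle W_n,v\rangle$. All auxiliary Gaussians and coupling randomness are generated from a single independent $d$-dimensional standard Gaussian $G$ (its law being atomless), giving the claimed $\sigma(G,H_1,\dots,H_n)$-measurability of $U$.

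The estimates then follow by combining three contributions: the regression error on $V_\ge$, which by the sub-Gaussian bound and the lower bound $\tau$ on the eigenvalues of $\Sigma_n$ there has sub-Gaussian proxy $\lesssim\delta^2+\delta^4/\tau\lesssim\delta^2$ and is $O(\delta)$ in $L^1$; the quantile-coupling error on $V_{<}$, of size $\lesssim n^{-1/2}$ in $L^1$ and likewise controlled exponentially; and the off-good-event part, where $|V|\le|U|+|W_n|$ is handled by the crude dimension-only sub-Gaussian bounds weighted by a power (close to one, via H\"older) of $\delta^{-3}n^{-1/2}$. Choosing $\tau,\delta$ as suitable negative powers of $n$ and optimising gives $\E|V|\lesssim n^{-\beta}$ for some $\beta=\beta(d)>0$, i.e. \eqref{e30}; the same ingredients assemble into \eqref{e32}, the shape $1+a_2e^{a_3\lambda^2}n^{-a_4}$ emerging because on the good event $\E e^{\lambda|V|}\le 1+C_d(\lambda\delta+\lambda^2\delta^2)e^{C_d\lambda^2\delta^2}$ and one chooses $a_3$ exceeding the dimensional constant so that $e^{C_d\lambda^2\delta^2}\le e^{a_3\lambda^2}$ on all of $0\le\lambda\le a_5 n^{1/2}$. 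When the $H_k$ are uniformly bounded, $W_n$ has bounded differences and is sub-Gaussian at every scale, nothing is lost at the threshold, and a Tusn\'ady-type strong coupling upgrades \eqref{e32} to the squared-exponential \eqref{e31}. For part (ii), with $\mathrm{Cov}(H_j)$ a scalar multiple of the identity $\Sigma_n$ itself is, so there is no degenerate part to discard; the near-optimal rate \eqref{e30'} comes not from a single mollification scale but from iterating the one-dimensional Tusn\'ady/quantile coupling along a dyadic (KMT-type) decomposition of $\{1,\dots,n\}$ --- $\sim\log n$ levels, and at each node a $d$-dimensional conditional coupling realised through $d$ successive one-dimensional ones, producing the $\log(n)^{d+1}$ factor --- or alternatively by quoting known multivariate strong-approximation theorems.

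The soft part is \eqref{e30}, where almost any coupling works; the delicate part --- which I expect to be the main obstacle --- is the precise ``$1+\text{small}$'' form of the exponential bounds \eqref{e32}--\eqref{e31} in the near-null directions of $\Sigma_n$, where $W_n$ has tiny variance but only an $O(1)$ a~priori exponential scale, so that a careless (say, independent) coupling would leave $V$ with an $O(1)$-scale tail there and destroy the estimate. Controlling this requires a genuine Tusn\'ady-type analysis of the quantile coupling, using that the third-moment sum is $O(n)$ so the $W_1$-distance to the limiting Gaussian is only $O(n^{-1/2})$; and then the bulk of the remaining work is the bookkeeping needed to make everything uniform in $\lambda$ on $0\le\lambda\le a_5 n^{1/2}$, balancing the small exponents $\tau,\delta$ against the constants $a_3,a_4,a_5$.
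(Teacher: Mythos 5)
Your overall architecture --- mollify by an independent Gaussian, compare the smoothed law quantitatively with the target Gaussian, couple, treat the near-degenerate spectral directions separately, and only then bootstrap to exponential moments --- matches the paper's, but the tools differ at almost every step. Where you bound $d_{\mathrm{TV}}$ of the mollified sum by Lindeberg swapping on $C^3$ test functions and then invoke a maximal coupling (paying a H\"older loss on the bad event) followed by a Gaussian regression to restore the exact covariance, the paper estimates the $L^1$-distance of characteristic functions $\|\widehat\mu-\widehat\nu\|_{L^1(\R^d)}$ directly (the three-region splitting \eqref{61}) and feeds it into an explicit $W_1$-coupling bound (Lemma \ref{le11} and Corollary \ref{co12}) that needs no Cauchy--Schwarz correction; it then simply absorbs the mollifier via $\E|U-W|\le\E|U-\widetilde W|+\E|G|$ rather than regressing it away. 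On the small-eigenvalue subspace the paper does the crudest possible thing --- it sets $U''\equiv 0$ and bounds $\E|W''|\le(\sum_{k>k_0}\lambda_k)^{1/2}$ --- so your multivariate quantile/Tusn\'ady coupling there is unnecessary. Most strikingly, the $\log(n)^{d+1}$ in \eqref{e30'} has nothing to do with a KMT dyadic scheme: it is just the factor $R^{d+1}$ from Corollary \ref{co12} evaluated at the optimal truncation radius $R\asymp\log n$, available with $\delta=0$ precisely because the isotropic covariance in part (ii) removes all small eigenvalues. Your approach buys conceptual familiarity (Lindeberg, maximal coupling); the paper's buys elementary self-containedness and the sharp rate in (ii) for free.

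Two points in your proposal are genuinely thin. First, your diagnosis that the ``$1+\text{small}$'' form of \eqref{e32}--\eqref{e31} requires a refined Tusn\'ady analysis in the near-null directions is mistaken: the paper derives both estimates \emph{solely} from the $L^1$ bound \eqref{e30} together with uniform Bernstein/Azuma tail bounds on $|V|$, by splitting $\E e^{\lambda|V|}$ at a level $M$ chosen so that $e^{-\lambda M}=\sqrt{\E|V|}$ and using the monotonicity of $t\mapsto t^{-1}(e^{\lambda t}-1)$; an $O(1)$-scale exponential tail in a null direction is harmless once $\E|V|$ is small. Second, your argument for part (ii) is not a proof as it stands: the dyadic Tusn\'ady construction is only asserted to produce the exponent $d+1$, and the fallback of ``quoting known multivariate strong-approximation theorems'' would need a precise citation valid for non-identically distributed summands under the exponential moment hypothesis \eqref{eq:exp}. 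For part (i), however, your route would go through with the indicated care.
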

\noindent We will postpone the proof of this result to a later section.

We'll now consider what kind of Gaussian approximation this implies in our case - our aim is to apply Proposition \ref{le2} to approximate $(C_m,S_m)$ by a $\R^{2}$-valued Gaussian random variable. To do this, we need to scale things a bit differently. Define the following sequence of $\R^{2}$-valued random variables (so in the setting of Proposition \ref{le2}, $d=2$)

\begin{equation}\label{eq:hj}
H_{j,m}=\left(\frac{\sqrt{p_{r_{m+1}-1}}}{\sqrt{p_{r_m-1+j}}}\cos \theta_{p_{r_m-1+j}},\frac{\sqrt{p_{r_{m+1}-1}}}{\sqrt{p_{r_m-1+j}}}\sin \theta_{p_{r_m-1+j}}\right).
\end{equation}

We now have 

\begin{equation*}
|H_{j,m}|^{2}\leq \frac{p_{r_{m+1}-1}}{p_{r_m}}\leq 2
\end{equation*}

\noindent and 

\begin{equation*}
Cov(H_{j,m})=\frac{1}{2}\frac{p_{r_{m+1}-1}}{p_{r_m-1+j}} I=c_{j,m}I
\end{equation*}

\noindent where $1/2\leq c_{j,m}\leq 1$. In this notation, we have 

\begin{equation*}
(C_m,S_m)=\frac{1}{\sqrt{p_{r_{m+1}-1}}}\sum_{j=1}^{r_{m+1}-r_m}H_{j,m}=\left({\frac{1}{2}\sum_{j=1}^{r_{m+1}-r_m}\frac{1}{p_{r_m-1+j}}}\right)^{1/2}\frac{1}{\sqrt{\sum_{j=1}^{r_{m+1}-r_m}c_{j,m}}}\sum_{j=1}^{r_{m+1}-r_m}H_{j,m}
\end{equation*}
Proposition \ref{le2} (ii) thus yields a sequence of independent standard two-dimensional normal variables $(V^{(1)}_n,V^{(2)}_n)$ for all $n\geq 1$, so that the distance between $(C_m,S_m)$ and 

\begin{equation*}
\left(\frac{1}{2}\sum_{j=1}^{r_{m+1}-r_m}\frac{1}{p_{r_m-1+j}}\right)^{1/2}(V_m^{(1)},V_m^{(2)})
\end{equation*}

\noindent can be controlled.


We may assume that our probability space is large enough for us to write for each $m\geq 1$ and $i\in\{1,2\}$
$$
\sqrt{\frac{1}{2}\sum_{j=1}^{r_{m+1}-r_m}\frac{1}{p_{r_m-1+j}}}V_m^{(i)}=\sum_{j=r_m}^{r_{m+1}-1} \frac{1}{\sqrt{2p_j}}W^{(i)}_j,
$$
where the $W^{(i)}_j$:s are independent standard normal random variables for all $j\geq 1$ and $i\in\{1,2\}$. Finally we can define our Gaussian approximation to the field, its blocks, and frozen versions of the blocks. 

\begin{definition}\label{de:gaussian}{\rm
Let $(W_k^{(j)})_{k\geq 1,j\in\lbrace 1,2\rbrace}$ be i.i.d. standard Gaussians. For any $N\geq1$ and $x\in[0,1]$ the Gaussian approximation of the field $X_N$ is given by the Gaussian field
\begin{equation}\label{eq:gn}
G_N(x):=\sum_{j=1}^N\frac{1}{\sqrt{2p_j}}\left(W^{(1)}_j\cos(x\log p_j)+W^{(2)}_j\sin(x\log p_j)\right).
\end{equation}

Moreover, we define the blocks of $G_N$ as 

\begin{equation}\label{eq:ygm}
Z_m(x)=\sum_{k=r_m}^{r_{m+1}-1}\frac{1}{\sqrt{2p_k}}\left(\cos(x\log p_k)W_k^{(1)}+\sin(x\log p_k)W_k^{(2)}\right)
\end{equation}

\noindent and a "frozen" version of the block as 

\begin{equation}
\widetilde Z_m(x)=\cos(x\log p_{r_m})b_m V_m^{(1)}+\sin(x\log p_{r_m})b_m V_m^{(2)},
\end{equation}

\noindent where

\begin{equation*}
b_m=\sqrt{\frac{1}{2}\sum_{j=1}^{r_{m+1}-r_m}\frac{1}{p_{r_m-1+j}}}.
\end{equation*}}

\end{definition}

We then the  start the analysis of the error produced by our approximations. This is first performed only for sums over full blocks. We introduce some notation for the errors. 
Let us call the error we make by approximating our "frozen" field by the "frozen" Gaussian one by 
\begin{equation}
\widetilde E_{1,n}(x):=\sum_{m=1}^{n}(\widetilde{Y}_m(x)-\widetilde Z_m(x)),\qquad x\in (0,1).
\end{equation}
In a similar vein, the error obtained from the "freezing procedure" is denoted by  
\begin{equation}
\widetilde E_{2,n}(x):=\sum_{m=1}^{n}\big(Y_m(x)-\widetilde{Y}_m(x)+\widetilde Z_m(x)-Z_m(x)\big),\qquad x\in (0,1).
\end{equation}
whence the total error  can be written as
\begin{equation}
\widetilde E_{n}(x):=\widetilde E_{1,n}(x)+ \widetilde E_{2,n}(x).
\end{equation}

We study first the error $\widetilde{E}_{1,n}$.

\begin{lemma}\label{le:E1}
Assume $($in addition to our previous constraints on $(r_m))$ that 
\begin{equation}
\sum_{m=1}^{\infty}(r_{m+1}-r_m)^{-a_4}<\infty,
\end{equation}
where $a_4$ is the constant from Proposition \ref{le2}.
Then, almost surely  there exists a continuous limit function
\begin{equation}\label{eq:171}
\widetilde E_1(x):=\lim_{n\to\infty} \widetilde E_{1,n}(x),
\end{equation}
where the convergence is in the sup-norm over $(0,1)$. Moreover, one has 
\begin{equation}\label{eq:172}
\E\exp(\lambda \sup_{0\leq n'<n}||\widetilde E_{1,n}-\widetilde E_{1,n'}||_{L^{\infty}(0,1)})<\infty\quad \textrm{for all}\quad \lambda >0,
\end{equation}
where one applies the convention $\widetilde E_{1,0}(x) \equiv 0$. In particular,
\begin{equation}\label{eq:173}
\E\exp(\lambda ||\widetilde E_1||_{L^{\infty}(0,1)})<\infty\quad \textrm{for all}\quad \lambda >0,
\end{equation}
\end{lemma}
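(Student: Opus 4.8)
The plan is to reduce all three assertions to a single nonnegative random series. For each block $m$ set $D_m:=b_m(V_m^{(1)},V_m^{(2)})-(C_m,S_m)\in\R^2$; by the computation preceding Definition~\ref{de:gaussian}, $D_m$ is $b_m$ times the error vector $V$ produced by Proposition~\ref{le2} applied (with $d=2$) to the block $\{p_{r_m},\dots,p_{r_{m+1}-1}\}$. Put $n_m:=r_{m+1}-r_m$ and $M_\infty:=\sum_{m\geq1}|D_m|$. First I would establish the pointwise estimate: since
\[
\widetilde Y_m(x)-\widetilde Z_m(x)=-\cos(x\log p_{r_m})\,(D_m)_1-\sin(x\log p_{r_m})\,(D_m)_2,
\]
the Cauchy--Schwarz inequality in $\R^2$ gives $\sup_{x\in(0,1)}|\widetilde Y_m(x)-\widetilde Z_m(x)|\leq|D_m|$. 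Consequently, for all $0\leq n'<n$,
\[
\|\widetilde E_{1,n}-\widetilde E_{1,n'}\|_{L^\infty(0,1)}=\Bigl\|\sum_{m=n'+1}^{n}(\widetilde Y_m-\widetilde Z_m)\Bigr\|_{L^\infty(0,1)}\leq\sum_{m=n'+1}^{n}|D_m|\leq M_\infty,
\]
so everything follows once we show (a) $M_\infty<\infty$ almost surely, and (b) $\E\exp(\lambda M_\infty)<\infty$ for every $\lambda>0$.

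Two structural facts are used throughout. First, independence across blocks: arranging the auxiliary standard Gaussian $G$ in Proposition~\ref{le2}, and the extra randomness used to write $b_mV_m^{(i)}=\sum_{j=r_m}^{r_{m+1}-1}(2p_j)^{-1/2}W_j^{(i)}$, to be independent over $m$, each $D_m$ becomes a function of the block-$m$ angles $(\theta_{p_k})_{r_m\leq k<r_{m+1}}$ and of block-$m$ auxiliary randomness only, so the $|D_m|$ are independent. Second, uniform control of the per-block error: because $|H_{j,m}|^2\leq2$ for all $j,m$, the uniformly bounded case of Proposition~\ref{le2} applies with constants independent of $m$, and \eqref{e32} (applied with $s\mapsto sb_m$) gives, for all $s\geq0$ and $m\geq1$,
\[
\E\exp(s|D_m|)\leq1+a_2e^{a_3s^2b_m^2}n_m^{-a_4};
\]
moreover, by Mertens' theorem together with $p_{r_{m+1}-1}/p_{r_m}\leq2$, the quantities $b_m^2=\tfrac12\sum_{k=r_m}^{r_{m+1}-1}p_k^{-1}$ are uniformly bounded, say $b_m\leq B$.

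For (a) I would take $s=1$ above and use $t\leq e^t-1$ to obtain $\E|D_m|\leq a_2e^{a_3B^2}n_m^{-a_4}$, whence $\E M_\infty=\sum_m\E|D_m|\leq a_2e^{a_3B^2}\sum_m n_m^{-a_4}<\infty$ by the hypothesis; hence $M_\infty<\infty$ a.s. Then $\sum_m(\widetilde Y_m-\widetilde Z_m)$ converges absolutely, and therefore uniformly, in $L^\infty(0,1)$, and since each partial sum is continuous in $x$ (a finite sum of functions $x\mapsto a\cos(x\log p_{r_m})+b\sin(x\log p_{r_m})$) the limit $\widetilde E_1$ is continuous, giving \eqref{eq:171}. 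For (b), by independence and $\log(1+t)\leq t$,
\[
\E\exp(\lambda M_\infty)=\prod_{m\geq1}\E\exp(\lambda|D_m|)\leq\prod_{m\geq1}\bigl(1+a_2e^{a_3\lambda^2b_m^2}n_m^{-a_4}\bigr)\leq\exp\Bigl(a_2e^{a_3\lambda^2B^2}\sum_{m\geq1}n_m^{-a_4}\Bigr)<\infty
\]
for every $\lambda>0$. Combined with $\sup_{0\leq n'<n}\|\widetilde E_{1,n}-\widetilde E_{1,n'}\|_{L^\infty(0,1)}\leq M_\infty$ this is \eqref{eq:172}, and letting $n\to\infty$ with $\widetilde E_{1,0}\equiv0$, together with Fatou, yields \eqref{eq:173}.

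The substantive difficulty is entirely contained in Proposition~\ref{le2}; here the only genuinely delicate points are the bookkeeping that makes the $D_m$ honestly independent over $m$, and matching the infinite product to the precise summability hypothesis $\sum_m n_m^{-a_4}<\infty$. The crux of the latter is that the factor $n_m^{-a_4}$ in \eqref{e32} is exactly what makes $\sum_m\log\bigl(1+a_2e^{a_3\lambda^2B^2}n_m^{-a_4}\bigr)$ converge --- uniformly for $\lambda$ in bounded sets, once $b_m\leq B$ is known --- which is why the lemma's hypothesis is stated with this exponent.
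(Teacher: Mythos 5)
Your proof is correct and follows essentially the same route as the paper's: reduce the block error to $b_m$ times the coupling error of Proposition \ref{le2}, check that $b_m$ is uniformly bounded, and use independence across blocks together with the exponential-moment bound \eqref{e32} to control the infinite product by the summability hypothesis. The only (harmless) differences are cosmetic — you bound the supremum by $\sum_m|D_m|$ directly rather than by $\sqrt{2}\sum_m|D_m|$, and you invoke Mertens' theorem in place of the prime number theorem for the boundedness of $b_m$.
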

\begin{proof}
To prove convergence of $\widetilde{E}_{1,m}$, we note that 

\begin{align*}
||\widetilde{E}_{1,m-1}-\widetilde{E}_{1,m}||_{L^\infty(0,1)}&\leq |(C_m,S_m)-b_m(V_m^{(1)},V_m^{(2)})|\\
\notag &=b_m\left|\frac{1}{\sqrt{\sum_{j=1}^{r_{m+1}-r_m}c_{j,m}}}\sum_{j=1}^{r_{m+1}-r_m}H_{j,m}-(V_m^{(1)},V_m^{(2)})\right|.
\end{align*}

We then recall that we assumed that $r_{m+1}/r_m\to 1$ so we see from the prime number theorem (and a crude estimate on the sum) that for some constant $C>0$

\begin{equation*}
b_m^2\leq C \frac{r_{m+1}-r_m}{r_m\max(\log r_m,1)}
\end{equation*}

\noindent so we see that $b_m\to 0$ and in particular, it is bounded. Thus by Proposition \ref{le2} \eqref{e32}, we have for some constants $C,\widetilde{C}$

\begin{align*}
\E||\widetilde{E}_{1,m-1}-\widetilde{E}_{1,m}||_{L^\infty(0,1)}&\leq C\E( e^{|(\sum_{j=1}^{r_{m+1}-r_m}c_{j,m})^{-1/2}\sum_{j=1}^{r_{m+1}-r_m}H_{j,m}-(V_m^{(1)},V_m^{(2)})|}-1)\\
&\leq \widetilde{C} (r_{m+1}-r_m)^{-a_4}.
\end{align*}

Thus by our assumption on $(r_m)$, the series

\begin{equation}
\widetilde{E}_1=\sum_{m=1}^\infty (\widetilde{E}_{1,m}-\widetilde{E}_{1,m-1})
\end{equation}

\noindent converges in $L^\infty(0,1)$.

We next use the crude estimate
\begin{align*}
\sup_{0\leq n'<n} ||\widetilde E_{1,n}-\widetilde E_{1,n'}||_{L^{\infty}(0,1)}&\leq \sum_{m=1}^{\infty}(|b_m V_m^{(1)}-C_m|+|b_m V_m^{(2)}-S_m|)\\
&\leq \sqrt{2}\sum_{m=1}^{\infty}|(C_m,S_m)-b_m(V_m^{(1)},V_m^{(2)})|,
\end{align*}
so that by independence and Proposition \ref{le2}
\begin{align*}
\E\exp(\lambda \sup_{0\leq n'<n}||\widetilde E_{1,n}-\widetilde E_{1,n'}||_{L^{\infty}(0,1)})&\leq \prod_{m=1}^{\infty}\E e^{\sqrt{2}\lambda |(C_m,S_m)-b_m(V_m^{(1)},V_m^{(2)})|}\\
\notag &\leq \prod_{m=1}^{\infty}\left(1+a_2 e^{2a_3\lambda^{2}b_m^{2}}(r_{m+1}-r_m)^{-a_4}\right)
\end{align*}

As we saw that $b_m$ is bounded and we find for some constant $C$ (depending on $\lambda$) that 

\begin{equation*}
\E\exp(\lambda \sup_{0\leq n'<n}||\widetilde E_{1,n}-\widetilde E_{1,n'}||_{L^{\infty}(0,1)})\leq \prod_{m=1}^{\infty}\left(1+C(r_{m+1}-r_m)^{-a_4}\right)\leq e^{C\sum_{m=1}^{\infty}(r_{m+1}-r_m)^{-a_4}},
\end{equation*}
and  \eqref{eq:172} follows. Finally, \eqref{eq:173} is an obvious  consequence of \eqref{eq:172}.

\end{proof}

Let us then estimate the error due to the  freezing procedure.

\begin{lemma}\label{le:E2}
Assume that the sequence $(r_m)$ is chosen so that 
\begin{equation}\label{eq:rcond}
\sum_{m=1}^{\infty}\frac{(r_{m+1}-r_m)(p_{r_{m+1}}-p_{r_m})^{2}}{r_m^{3}}<\infty.
\end{equation}
Then, almost surely  there exists the continuous limit function
\begin{equation}\label{eq:181}
\widetilde E_2(x):=\lim_{n\to\infty} \widetilde E_{2,n}(x),
\end{equation}
where the convergence is in the sup-norm over $(0,1)$.  Moreover,  for small enough $a>0$ we have both
\begin{equation}\label{eq:182}
\E \exp\big(a \|\widetilde E_{2}\|_{L^{\infty}(0,1)}^{2}\big)<\infty
\end{equation}
and 
\begin{equation}\label{eq:183}
\E \exp\big(a \sup_{0\leq n'<n}\|\widetilde E_{2,n}-\widetilde E_{2,n'}\|_{L^{\infty}(0,1)}^{2}\big)<\infty
\end{equation}
Consequently, 
\begin{equation}\label{eq:184}
\E \exp\big(\lambda \|\widetilde E_{2}\|_{L^{\infty}(0,1)}\big)<\infty \quad  \textrm{for all}\quad \lambda >0\qquad \textrm{and}
\end{equation}
\begin{equation}\label{eq:185}
\E \exp\big(\lambda \sup_{0\leq n'<n}||\widetilde E_{2,n}-\widetilde E_{2,n'}||_{L^{\infty}(0,1)}\big)<\infty \quad  \textrm{for all}\quad \lambda >0.
\end{equation}
\end{lemma}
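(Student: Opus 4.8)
The plan is to route everything through the $L^2(0,1)$‑norm of the derivative. Write $\xi_m:=Y_m-\widetilde Y_m+\widetilde Z_m-Z_m=D_m-\widetilde D_m^{G}$ with $D_m:=Y_m-\widetilde Y_m$, $\widetilde D_m^{G}:=Z_m-\widetilde Z_m$, so that $\widetilde E_{2,n}=\sum_{m=1}^n\xi_m$; the $\xi_m$ are independent, mean‑zero, real‑analytic in $x$, and—crucially—vanish at $x=0$, since freezing is exact there: $Y_m(0)=C_m=\widetilde Y_m(0)$, and on the Gaussian side $Z_m(0)=\sum_{k=r_m}^{r_{m+1}-1}(2p_k)^{-1/2}W_k^{(1)}=b_mV_m^{(1)}=\widetilde Z_m(0)$. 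Differentiating gives
$$
D_m'(x)=\sum_{k=r_m}^{r_{m+1}-1}\frac{g_k(x)\cos\theta_{p_k}+h_k(x)\sin\theta_{p_k}}{\sqrt{p_k}},\qquad (\widetilde D_m^{G})'(x)=\sum_{k=r_m}^{r_{m+1}-1}\frac{g_k(x)W_k^{(1)}+h_k(x)W_k^{(2)}}{\sqrt{2p_k}},
$$
with the \emph{same} coefficients $g_k(x)=\log p_{r_m}\sin(x\log p_{r_m})-\log p_k\sin(x\log p_k)$ and $h_k(x)=\log p_k\cos(x\log p_k)-\log p_{r_m}\cos(x\log p_{r_m})$, which satisfy $|g_k(x)|,|h_k(x)|\le\Lambda_m:=(1+\log p_{r_{m+1}-1})\log(p_{r_{m+1}-1}/p_{r_m})$ for $x\in[0,1]$ (split each into a ``frequency‑gap'' and a ``phase‑gap'' piece, use $|(\sin)'|\le1$ and $\log(1+t)\le t$). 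Since $\xi_m(0)=0$, for any $0\le M<n$,
$$
\Big\|\sum_{m=M+1}^n\xi_m\Big\|_{L^\infty(0,1)}\le\Big\|\sum_{m=M+1}^n\xi_m'\Big\|_{L^1(0,1)}\le\Big\|\sum_{m=M+1}^n\xi_m'\Big\|_{L^2(0,1)},
$$
which reduces every assertion of the lemma to estimates for a sum of independent mean‑zero vectors in the Hilbert space $L^2(0,1)$.

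Carrying this out: from the displays, $\sum_{k=r_m}^{r_{m+1}-1}p_k^{-1}\le(r_{m+1}-r_m)/p_{r_m}$, orthogonality of $\cos\theta_{p_k},\sin\theta_{p_k}$ and $\mathrm{Var}(W_k^{(i)})=1$ one obtains the per‑block bound $\E\|\xi_m'\|_{L^2(0,1)}^2\le 4\,\Lambda_m^2(r_{m+1}-r_m)/p_{r_m}$, uniformly in $x$ for the integrand. Invoking the prime number theorem in the weak form $p_{r_m}\asymp r_m\log r_m$ (which, together with $p_{r_{m+1}-1}\le2p_{r_m}$, also gives $\log p_{r_{m+1}-1}\lesssim\log r_m$), the right side is $\lesssim(p_{r_{m+1}}-p_{r_m})^2(r_{m+1}-r_m)/r_m^3$, so \eqref{eq:rcond} is exactly the statement $\sum_m\E\|\xi_m'\|_{L^2(0,1)}^2<\infty$. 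Hence $\E\|\widetilde E_{2,n}-\widetilde E_{2,n'}\|_{L^\infty(0,1)}^2\le\sum_{m=n'+1}^n\E\|\xi_m'\|_{L^2(0,1)}^2\to0$; since $n\mapsto\sum_{m\le n}\xi_m'$ is an $L^2(0,1)$‑valued martingale, Doob's $L^2$ maximal inequality upgrades this to $\E\sup_{M<n'\le n}\|\widetilde E_{2,n'}-\widetilde E_{2,M}\|_{L^\infty(0,1)}^2\le4\sum_{m>M}\E\|\xi_m'\|_{L^2(0,1)}^2\to0$ as $M\to\infty$, and the usual monotonicity argument then forces $(\widetilde E_{2,n})$ to be a.s. Cauchy in $\|\cdot\|_{L^\infty(0,1)}$, proving \eqref{eq:181}.

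For the exponential‑square bounds \eqref{eq:182}--\eqref{eq:183} I would fix $M_0$ with $\tau_{M_0}:=\sum_{m>M_0}\E\|\xi_m'\|_{L^2(0,1)}^2$ finite and as small as desired, and split $\widetilde E_{2,n}-\widetilde E_{2,n'}$ into its part over the first $M_0$ blocks and the rest. The first part is dominated by the fixed random variable $A^*:=\sum_{m=1}^{M_0}(\|D_m\|_{L^\infty(0,1)}+\|\widetilde D_m^{G}\|_{L^\infty(0,1)})$, and $\E e^{aA^{*2}}<\infty$ for small $a$ because each $\|D_m\|_{L^\infty(0,1)}$ is deterministically bounded and each $\|\widetilde D_m^{G}\|_{L^\infty(0,1)}$ is a finite linear combination of moduli of Gaussians. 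For the tail one checks $\sup_{0\le n'<n}\big\|\sum_{m\in(n',n],\,m>M_0}\xi_m\big\|_{L^\infty(0,1)}\le A^*+2R$ where $R:=\big\|\sup_{M_0<n''\le n}\big|\sum_{m=M_0+1}^{n''}\xi_m'(\cdot)\big|\big\|_{L^2(0,1)}$ (using $\|f\|_{L^\infty(0,1)}\le\|f'\|_{L^2(0,1)}$ on each partial sum, then $\sup_{n''}\|f_{n''}\|_{L^2(0,1)}\le\|\sup_{n''}|f_{n''}|\|_{L^2(0,1)}$). Now Jensen's inequality on $((0,1),dx)$ gives $\E e^{aR^2}\le\int_0^1\E\exp(a\,G(x)^2)\,dx$ with $G(x):=\sup_{M_0<n''\le n}|\sum_{m=M_0+1}^{n''}\xi_m'(x)|$; for each fixed $x$ this is the running maximum of a martingale whose independent increments $\xi_m'(x)$ are uniformly in $x$ sub‑Gaussian with variance proxy $\lesssim\Lambda_m^2(r_{m+1}-r_m)/p_{r_m}$—for $D_m'(x)$ by Hoeffding (its summands are mean‑zero and bounded by $2\Lambda_m/\sqrt{p_k}$), for $(\widetilde D_m^{G})'(x)$ because it is Gaussian—so Doob's $L^2$ inequality applied to $e^{\lambda\sum_{m\le n''}\xi_m'(x)}$ plus the sub‑Gaussian moment generating function yield $\E\exp(a\,G(x)^2)\le1+Ca\tau_{M_0}$ for $a\tau_{M_0}$ small, uniformly in $x$. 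Hence $\E e^{aR^2}<\infty$, and combining with $A^*$ via Cauchy--Schwarz gives \eqref{eq:182}--\eqref{eq:183}; finally \eqref{eq:184}--\eqref{eq:185} follow from $\lambda t\le at^2+\lambda^2/(4a)$.

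The main obstacle is conceptual rather than computational. Condition \eqref{eq:rcond} only yields $\sum_m\E\|\xi_m\|_{L^\infty(0,1)}^2<\infty$, not $\sum_m\E\|\xi_m\|_{L^\infty(0,1)}<\infty$, so the series $\sum_m\xi_m$ cannot be summed absolutely in $C(0,1)$; and one cannot decompose the field prime‑by‑prime, because the Gaussian coupling of Proposition \ref{le2} is carried out block‑by‑block (within a block the $W_k^{(i)}$ are independent neither of the $\theta_{p_k}$ nor of one another). One is therefore forced to exploit cancellation between blocks, and the device that makes this tractable in a Banach space like $C(0,1)$ is precisely the reduction $\|f\|_{L^\infty(0,1)}\le\|f'\|_{L^1(0,1)}\le\|f'\|_{L^2(0,1)}$ (legitimate because $\xi_m(0)=0$) to the Hilbert space $L^2(0,1)$, where independent mean‑zero vectors are orthogonal and $L^p$‑martingale tools apply; the companion point requiring care is the use of Jensen's inequality to convert an $L^2(dx)$‑average of martingale maxima into a genuine exponential‑square moment, which hinges on the sub‑Gaussianity of $\xi_m'(x)$ being uniform in $x$.
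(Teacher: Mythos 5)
Your proof is correct and follows essentially the same route as the paper's: reduce the sup-norm over $(0,1)$ to an $L^2$-bound on the derivative (the paper uses the embedding $\|g\|_{L^\infty}\le 2\|g\|_{W^{1,2}}$ where you use $\xi_m(0)=0$ and $\|f\|_{L^\infty}\le\|f'\|_{L^2}$), exploit independence and centredness of the blocks together with the bounds $|g_k|,|h_k|\lesssim \log p_{r_m}\log(p_k/p_{r_m})$ to land exactly on condition \eqref{eq:rcond}, and derive the exponential-square moments from pointwise-in-$x$ sub-Gaussian concentration (Hoeffding/Azuma for the $\theta$-part, Gaussian tails for the $W$-part, with the cross term absorbed by Cauchy--Schwarz) followed by Jensen and Fubini --- the only real divergence being that the paper invokes L\'evy's inequality for symmetric $C(0,1)$-valued sums where you use Doob's maximal inequality, which is an equally valid substitute. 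One small correction to your closing discussion: the $W_k^{(i)}$ within a block are constructed to be mutually independent standard normals (only their joint law with the $\theta_{p_k}$ of the same block is constrained by the coupling), and this independence is in fact what your variance computation for $(\widetilde D_m^{G})'$ tacitly relies on, so the computation stands but the parenthetical claim that they are not independent of one another should be dropped.
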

\begin{proof}
For the reader's convenience we first recall a standard estimate for the sup-norm of a given function $g\in C^1(0,1)$. For arbitrary $x,y\in(0,1)$ we may estimate 
\begin{equation*}
|g(x)|=|g(y)+\int_y^{x}g'(t)dt|\leq |g(y)|+\int_0^1|g'(t)|dt.
\end{equation*}
Since $x$ is arbitrary, by integrating with respect to $y$ and using the  Cauchy-Schwarz inequality we obtain
\begin{equation}\label{eq:sobolev}
\|g\|_{L^\infty (0,1)}\leq \int_0^1(|g'(t)|+|g(t)|)dt\leq  2 \Big(\int_0^1(|g'(t)|^2+|g(t)|^2)dt\Big)^{1/2}:=2||g||_{W^{1,2}(0,1)}
\end{equation}

Recall that 
\begin{align}\label{eq:190}
Y_m(x)- \widetilde{Y}_m(x)&+\widetilde{Z}_m(x)-Z_m(x)\\
&=\sum_{k=r_m}^{r_{m+1}-1}\frac{1}{\sqrt{p_k}}\big(\cos \theta_{p_k}-2^{-1/2}W^{(1)}_k\big)\big(\cos(x\log p_k))-\cos(x\log p_{r_m})\big)\notag \\
&\qquad + \sum_{k=r_m}^{r_{m+1}-1}\frac{1}{\sqrt{p_k}}\big(\sin \theta_{p_k}-2^{-1/2}W^{(2)}_k\big)\big(\sin(x\log p_k))-\sin(x\log p_{r_m})\big).\nonumber
\end{align}
As $(Y_m-\widetilde{Y}_m+\widetilde{Z}_m-Z_m)$ is independent of $(Y_{m'}-\widetilde{Y}_{m'}+\widetilde{Z}_{m'}-Z_{m'})$ for $m\neq m'$, and these objects have vanishing expectation, we have
\begin{align}
\E\|\widetilde E_{n,2}-\widetilde E_{n',2}||_{L^{\infty }(0,1)}^{2}&\leq 2\sum_{m=n'+1}^{n}\int_0^{1}\Big(\E(Y_m(x)-\widetilde{Y}_m(x)+\widetilde{Z}_m(x)-Z_m(x))^{2}\\
\notag & \qquad +\E(Y'_m(x)-\widetilde{Y}'_m(x)+\widetilde{Z}_{m}'(z)-Z_m'(x))^{2}\Big)dx.
\end{align}
Observe that  for all $x\in (0,1)$ we have
\begin{eqnarray}\label{eq:191}
&&\big(\cos(x\log p_k)-\cos(x\log p_{r_m})\big)^{2}+\big(\sin(x\log p_k)-\sin(x\log p_{r_m})\big)^{2}\\&=&2( 1-\cos(x (\log p_k-\log p_{r_m})))
\;\; \leq \;\;(\log (p_k/p_{r_m}))^2.\nonumber
\end{eqnarray}
Moreover,
\begin{align*}
|\log p\sin(x\log p)-\log q\sin(x\log q)|&\leq \log p|\sin(x\log p)-\sin(x\log q)|+|\log p-\log q|\\
&\leq 2\log p |\log p-\log q|
\end{align*}
and as a similar estimate is valid for the cosine term, we deduce that 
\begin{eqnarray}\label{eq:192}
&&\Big(\frac{d}{dx}\big(\cos(x\log p_k)-\cos(x\log p_{r_m})\big)\Big)^{2}+\Big(\frac{d}{dx}\big(\sin(x\log p_k)-\sin(x\log p_{r_m})\big)\Big)^{2}\\
&\leq & 
8\big(\log(p_{r_m})\log (p_k/p_{r_m})\big)^2.\nonumber
\end{eqnarray}
By dividing the sum \eqref{eq:190} into two parts\footnote{Here one should note that the variable $\theta_{p_j}$ inside a block is not necessarily independent of any of the variables $W^{(i)}_j$ inside the same block!}, where the first one corresponds to the random variables $\theta_{p_j}$ and the second one the random variables $W^{(i)}_j$ (and then use the elementary inequality $(a+b)^2\leq 2(a^2+b^2)$), we may perform for both parts an identical computation that uses independence and the previous estimates
to obtain for any $x\in(0,1)$
\begin{align}\label{eq:193}
\E(Y_m(x)-\widetilde{Y}_m(x)&+\widetilde{Z}_m(x)-Z_m(x))^{2} +\E(Y'_m(x)-\widetilde{Y}'_m(x)+\widetilde{Z}_{m}'(z)-Z_m'(x))^{2}\\
& \leq 36\sum_{k=r_m}^{r_{m+1}-1}\frac{1}{p_k}\big(\log(p_{r_m})\log (p_k/p_{r_m})\big)^2\notag\\
&\lesssim (r_{m+1}-r_m)\frac{\log^{2}p_{r_m}}{p_{r_m}^{3}}(p_{r_{m+1}-1}-p_{r_m})^{2}.\nonumber
\end{align}
By summing over $m\in\{ n'+1,\ldots ,n\}$ and integrating over $(0,1)$ it follows that
\begin{equation}\label{eq:194}
\E \| \widetilde E_{n,2}-\widetilde{E}_{n',2}\|_{L^{\infty }(0,1)}^{2}\lesssim\sum_{m=n'+1}^n (r_{m+1}-r_m)\frac{\log^{2}p_{r_m}}{p_{r_m}^{3}}(p_{r_{m+1}-1}-p_{r_m})^{2}.
\nonumber
\end{equation}
Then Levy's inequality (see \cite[Lemma 1., p. 14 ]{Kahane2}, applied  here to our  $C(0,1)-$valued symmetric random variables) yields that

\begin{equation}\label{eq:195}
\E \Big(\sup_{n'\leq r\leq n}\| \widetilde E_{r,2}-\widetilde E_{n',2}\|_{L^{\infty }(0,1)}^{2}\Big)\lesssim\sum_{m=n'+1}^n (r_{m+1}-r_m)\frac{\log^{2}p_{r_m}}{p_{r_m}^{3}}(p_{r_{m+1}-1}-p_{r_m})^{2}.
\end{equation}
Using the prime number theorem, we can bound this series by one appearing in the assumptions of this lemma. Thus the series above converges and this enables us to pick a subsequence $(n_\ell)$  with the property
$$
\sum_{m=n_\ell+1}^{n_{\ell+1}} (r_{m+1}-r_m)\frac{\log^{2}p_{r_m}}{p_{r_m}^{3}}(p_{r_{m+1}-1}-p_{r_m})^{2}<\ell^{-6}\quad\textrm{for all}\quad \ell\geq 1.
$$ 
 Borel-Cantelli lemma combined with \eqref{eq:195} yields an almost surely finite   index $\ell_0(\omega)$ such that
$$
\sup_{n_\ell+1\leq u\leq n_{\ell+1}} \| \widetilde E_{u,2}-\widetilde E_{n_\ell,2}\|_{L^{\infty }}\leq\ell^{-2}\quad \textrm{for}\quad \ell\geq \ell_0(\omega),
$$
summing over $l$, this yields the statement \eqref{eq:181} on the convergence.

\smallskip

In order to consider the double exponential integrability of our random variable, let us define the sequence $(c_k)$ by setting 
$c_k=8p_{r_m}^{-1/2}\log(p_{r_m})\log (p_k/p_{r_m})$ for $r_m\leq k\leq r_{m+1}-1$ and $m\geq 1.$ Fix any $x\in (0,1)$ and observe that \eqref{eq:190} and our estimates \eqref{eq:191} and
\eqref{eq:192} show that we may write
$$
\widetilde  E'_{2}(x) =\sum_{k=1}^\infty A_k(x)\qquad\textrm{and}\qquad \widetilde  E_{2}(x) =\sum_{k=1}^\infty B_k(x),
$$
where the symmetric  random variables $A_k(x)$  
can be written in the form $A_k(x)= A_{1,k}(x)+ A_{2,k}(x)$, so  that the random variables $A_{1,k}(x)$ in turn are independent and satisfy the bound $|A_{1,k}(x)|\leq c_k$ 
for all $k$.  In turn, the variables $A_{2,k}$ are independent centered Gaussians with $\E (A_{2,k}(x))^2\leq c_k^2$. Note that in particular, the argument for uniform convergence of $\widetilde{E}_{n,2}$ goes through essentially unchanged for proving uniform convergence of $\widetilde{E}_{n,2}'$ so we can indeed differentiate term by term. Our previous computations for \eqref{eq:193} verify that $\sum_{k=1}^\infty c_k^2<\infty.$ A similar decomposition is valid for the terms $B_k(x)$ with the same bounds. Azuma's inequality applied to the bounded summands, and a trivial estimate to the Gaussian sums (along with H\"older to allow us to consider the Gaussian and non-Gaussian case separately) yields  for small enough $a>0$ the existence of a finite constant $C$  such that both
\begin{equation}\label{e20}
\E \exp\big(a  |\widetilde E_2(0)|^2\big)\leq C\qquad \textrm{ and}\qquad   \E \exp\big(a  |\widetilde E'_2(x)|^2\big)\leq C\quad \textrm{ for all}\quad  x\in [0,1].
\end{equation}
 In particular,  Fubini  yields that
 \begin{equation}\label{e21}
 \E \int_0^1  \exp\big(a  |\widetilde E'_2(x)|^2\big)dx<\infty.
 \end{equation}
 By the first inequality in \eqref{e20} it is enough to show that $\sup_{x\in(0,1)}|E_2(x)-E_2(0)|$ has the desired exponential integrability. However,
 now  $\sup_{x\in(0,1)}|\widetilde E_2(x)-\widetilde E_2(0)|\leq \int_0^1|\widetilde E_2'(x)|dx$ and since $t\mapsto \exp(at^2)$ is convex we obtain by Jensen's inequality
 \begin{equation}\label{e22}
\exp\big(a \Big( \int_0^1|\widetilde E_2'(x)|dx\Big)^2\big)\leq\int_0^1  \exp\big(a  |\widetilde E'_2(x)|^2\big)dx
  \end{equation}
 and \eqref{eq:182} is obtained by taking expectations and remembering \eqref{e21}. This improves to  \eqref{eq:183} by Levy's inequality, perhaps by making $a$ smaller if needed, and  finally  \eqref{eq:184} and \eqref{eq:185} follow immediately.

\end{proof}

We next combine the error estimates proven so far and make the final choice for the subsequence $(r_m)$. For that purpose we need the following well-known lemma, whose proof we include for the reader's convenience.

\begin{lemma}\label{le:li-inverse}  For large enough $n$ it holds that
\begin{equation}\label{eq:prime}
-ne^{-\sqrt{\log n}}\lesssim p_n- \mathrm{Li}^{-1}(n)\lesssim ne^{-\sqrt{\log n}}.\nonumber
\end{equation}
\end{lemma}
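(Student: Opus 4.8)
The statement $-ne^{-\sqrt{\log n}} \lesssim p_n - \mathrm{Li}^{-1}(n) \lesssim ne^{-\sqrt{\log n}}$ should follow from the prime number theorem with classical (de la Vallée Poussin-type) error term by inverting the asymptotic relation $\mathrm{Li}(p_n) \sim n$. First I would recall the standard form of PNT with error term: $\pi(x) = \mathrm{Li}(x) + O(x e^{-c\sqrt{\log x}})$ for some $c>0$ and all $x \geq 2$. Since $\pi(p_n) = n$, this gives $n = \mathrm{Li}(p_n) + O(p_n e^{-c\sqrt{\log p_n}})$, i.e. $\mathrm{Li}(p_n) = n + O(p_n e^{-c\sqrt{\log p_n}})$. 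The goal is then to "solve for $p_n$" by applying $\mathrm{Li}^{-1}$, which requires controlling the derivative of $\mathrm{Li}^{-1}$, equivalently the derivative of $\mathrm{Li}$.

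Second, I would use the mean value theorem: $p_n - \mathrm{Li}^{-1}(n) = (\mathrm{Li}^{-1})'(\xi)\,(\mathrm{Li}(p_n) - n)$ for some $\xi$ between $n$ and $\mathrm{Li}(p_n)$, where $(\mathrm{Li}^{-1})'(y) = \log(\mathrm{Li}^{-1}(y))$ because $\mathrm{Li}'(x) = 1/\log x$. Since $\mathrm{Li}^{-1}(y) \asymp y \log y$ and both $n$ and $\mathrm{Li}(p_n)$ are comparable (their difference is lower order), we get $(\mathrm{Li}^{-1})'(\xi) = \log(\mathrm{Li}^{-1}(\xi)) \asymp \log n$, while $p_n \asymp n\log n$ so that $\log p_n \asymp \log n$. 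Combining,
$$
|p_n - \mathrm{Li}^{-1}(n)| \lesssim (\log n)\cdot p_n e^{-c\sqrt{\log p_n}} \lesssim (\log n)(n\log n) e^{-c'\sqrt{\log n}} \lesssim n e^{-c''\sqrt{\log n}}
$$
for suitable constants, since the polynomial-in-$\log n$ factors are absorbed by lowering the constant in the exponent (i.e., $(\log n)^2 e^{-c\sqrt{\log n}} \leq e^{-(c/2)\sqrt{\log n}}$ for large $n$). This handles both the upper and lower bounds simultaneously.

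A couple of routine points need care: one must check that $n$ and $\mathrm{Li}(p_n)$ are genuinely of the same order so that $\xi$ is too — this is immediate because their difference is $O(p_n e^{-c\sqrt{\log p_n}}) = o(n)$. One also needs the elementary asymptotics $p_n = n\log n + O(n\log\log n)$ and $\mathrm{Li}^{-1}(y) = y\log y + O(y\log\log y)$, both standard consequences of PNT, to justify $\log p_n \asymp \log n$ and $\log(\mathrm{Li}^{-1}(\xi)) \asymp \log n$.

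**Main obstacle.** There is no deep obstacle here — this is a bookkeeping lemma. The only mildly delicate point is making sure the extra $\log n$ factors (one from the derivative of $\mathrm{Li}^{-1}$, one from $p_n \asymp n\log n$) are correctly swallowed into the subexponential term $e^{-\sqrt{\log n}}$; this is exactly why the statement is phrased with $\lesssim$ rather than with an explicit constant in the exponent, and the absorption $(\log n)^{A} e^{-c\sqrt{\log n}} \lesssim e^{-\sqrt{\log n}}$ (valid for $n$ large) is what "for large enough $n$" in the statement is there to accommodate.
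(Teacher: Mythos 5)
Your proposal follows essentially the same route as the paper: apply the prime number theorem with a subexponential error term to get $\mathrm{Li}(p_n)=n+O(p_n e^{-c\sqrt{\log p_n}})$, then transfer this to $p_n-\mathrm{Li}^{-1}(n)$ by controlling the derivative $(\mathrm{Li}^{-1})'=\log\circ\,\mathrm{Li}^{-1}\asymp\log n$ (the paper does this via convexity of $\mathrm{Li}^{-1}$ and a derivative bound at the endpoint rather than the mean value theorem at an intermediate point, which is the same idea). The bookkeeping with the two $\log n$ factors is handled correctly.

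One point does need fixing. You start from the PNT error term $O(xe^{-c\sqrt{\log x}})$ \emph{for some} $c>0$, and your absorption step then yields $ne^{-c''\sqrt{\log n}}$ with $c''<c$. If the available $c$ is small (say the constant from the classical zero-free region), then $c''<1$ and $ne^{-c''\sqrt{\log n}}$ is strictly \emph{larger} than the bound $ne^{-\sqrt{\log n}}$ asserted in the lemma, so the stated inequality does not follow. To prove the lemma as written you must invoke the (still standard) form of the PNT in which the constant in the exponent may be taken as large as you like -- e.g.\ $|\pi(x)-\mathrm{Li}(x)|=O(xe^{-2\sqrt{\log x}})$, which follows from the Vinogradov--Korobov error term $O(x\exp(-c_0(\log x)^{3/5}(\log\log x)^{-1/5}))$; this is exactly what the paper does, taking $c=2$ so that the factor $2\log n$ from the derivative is absorbed and one lands precisely on $ne^{-\sqrt{\log n}}$. (For the paper's applications the exact constant in the exponent is immaterial, but the lemma as stated pins it to $1$.)
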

\begin{proof} We note first that the inverse $ \mathrm{Li}^{-1}$ is convex since $ \mathrm{Li}$ itself is concave. Furthermore,
we have $( \mathrm{Li}^{-1})'(x)=\log( \mathrm{Li}^{-1}(x))\leq \log(2x\log(x))\leq 2\log(x)$ for large enough $x.$ Hence, as a suitable  quantitative version of the prime number theorem verifies that for any $c\geq 1$
$|\pi(x)- \mathrm{Li}(x)|=O\big(x\exp(-c\sqrt{log x})\big)$, so we have $n=\pi(p_n)\leq  \mathrm{Li}(p_n)+ne^{-2\sqrt{\log n}}$. In particular, 
$$
p_n\geq \mathrm{Li}^{-1}(n- ne^{-2\sqrt{\log n}})\geq  \mathrm{Li}^{-1}(n)-ne^{-2\sqrt{\log n}}( \mathrm{Li}^{-1})'(n)\geq \mathrm{Li}^{-1}(n)-ne^{-\sqrt{\log n}}.
$$
The proof of the other direction is analogous.
\end{proof}

\begin{proposition}\label{pr2} Fix $\alpha\in (0,2/5)$ and define $r_m=\lfloor \exp (m^\alpha)\rfloor$. Then  the combined error $\widetilde E_{n}(x)=\widetilde{E}_{n,1}(x)+
\widetilde  E_{n,2}(x)$ a.s. converges uniformly on $(0,1)$ to a continuous limit function
$$
E(x):=\lim_{n\to\infty} (\widetilde E_{n,1}(x)+
\widetilde  E_{n,2}(x)).
$$
Moreover, it holds that
\begin{equation}\label{eq:204}
\E \exp\big(\lambda \|  E\|_{L^{\infty}(0,1)}\big)<\infty \quad \textrm{and}\quad \E \exp\big(\lambda \sup_{0\leq n'<n}\| \widetilde E_{n}-\widetilde E_{n'}\|_{L^{\infty}(0,1)}\big)<\infty\quad  \textrm{for all}\quad \lambda >0.
\end{equation}
 \end{proposition}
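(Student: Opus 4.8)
The plan is to verify that the explicit choice $r_m=\lfloor\exp(m^\alpha)\rfloor$ (taken for all large $m$; the finitely many initial terms are immaterial and may be fixed so as to meet $r_1=1$ and the earlier structural constraints) fulfils both the earlier requirements on the sequence $(r_m)$ and the summability hypotheses of Lemmas~\ref{le:E1} and~\ref{le:E2}. Once this is done the proposition is essentially immediate: the a.s.\ uniform convergence of $\widetilde E_{n,1}$ and $\widetilde E_{n,2}$ to continuous limits $\widetilde E_1$, $\widetilde E_2$, hence of $\widetilde E_n=\widetilde E_{n,1}+\widetilde E_{n,2}$ to the continuous function $E:=\widetilde E_1+\widetilde E_2$, is exactly the content of those two lemmas; and the bounds in \eqref{eq:204} follow from \eqref{eq:173}, \eqref{eq:184} and \eqref{eq:172}, \eqref{eq:185} via Cauchy--Schwarz, for instance $\E\exp(\lambda\|E\|_{L^\infty(0,1)})\leq(\E\exp(2\lambda\|\widetilde E_1\|_{L^\infty(0,1)}))^{1/2}(\E\exp(2\lambda\|\widetilde E_2\|_{L^\infty(0,1)}))^{1/2}<\infty$, and the same estimate applied to the increments $\widetilde E_n-\widetilde E_{n'}$.

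First I would record the elementary asymptotics, all of which rest only on $(m+1)^\alpha-m^\alpha\sim\alpha m^{\alpha-1}\to0$ and the prime number theorem $p_n\sim n\log n$: one gets $r_{m+1}/r_m\to1$, $\log r_m\sim m^\alpha$, $r_{m+1}-r_m\sim\alpha m^{\alpha-1}\exp(m^\alpha)\to\infty$ (so $r_{m+1}-r_m>1$ for large $m$), and $p_{r_{m+1}-1}/p_{r_m}\to1$ (so it is $\leq2$ for large $m$). This settles the structural constraints. The hypothesis of Lemma~\ref{le:E1}, that $\sum_m(r_{m+1}-r_m)^{-a_4}<\infty$, is then trivial, since $r_{m+1}-r_m\geq\exp(m^\alpha/2)$ for large $m$, so the terms are eventually dominated by $m^{-2}$; note this holds for every $a_4>0$ and every $\alpha>0$, so it is Lemma~\ref{le:E2} that will force the restriction $\alpha<2/5$.

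The crux is the hypothesis \eqref{eq:rcond} of Lemma~\ref{le:E2}, i.e.\ $\sum_m(r_{m+1}-r_m)(p_{r_{m+1}}-p_{r_m})^2 r_m^{-3}<\infty$. To control $p_{r_{m+1}}-p_{r_m}$ I would use Lemma~\ref{le:li-inverse} to write $p_{r_{m+1}}-p_{r_m}=\big(\mathrm{Li}^{-1}(r_{m+1})-\mathrm{Li}^{-1}(r_m)\big)+O\big(r_{m+1}e^{-\sqrt{\log r_{m+1}}}\big)$; since $\mathrm{Li}^{-1}$ is convex with $(\mathrm{Li}^{-1})'(x)=\log\mathrm{Li}^{-1}(x)\leq2\log x$ for large $x$, the main term is at most $(\mathrm{Li}^{-1})'(r_{m+1})(r_{m+1}-r_m)\lesssim(\log r_{m+1})(r_{m+1}-r_m)\lesssim m^\alpha\cdot m^{\alpha-1}\exp(m^\alpha)$, while the error term, of order at most $\exp(m^\alpha-m^{\alpha/2})$, is $o\big(m^{2\alpha-1}\exp(m^\alpha)\big)$ because $m^{1-2\alpha}e^{-m^{\alpha/2}}\to0$ (here $\alpha<1/2$ is used). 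Hence $p_{r_{m+1}}-p_{r_m}\lesssim m^{2\alpha-1}\exp(m^\alpha)$. Feeding this bound, together with $r_{m+1}-r_m\lesssim m^{\alpha-1}\exp(m^\alpha)$ and $r_m\geq\tfrac{1}{2}\exp(m^\alpha)$, into the general term, the exponential factors cancel exactly ($1+2-3=0$ powers of $e^{m^\alpha}$), leaving
$$
\frac{(r_{m+1}-r_m)(p_{r_{m+1}}-p_{r_m})^2}{r_m^3}\;\lesssim\; m^{(\alpha-1)+2(2\alpha-1)}\;=\;m^{5\alpha-3},
$$
which is summable exactly when $5\alpha-3<-1$, i.e.\ $\alpha<2/5$, which is precisely the hypothesis of the proposition. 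So \eqref{eq:rcond} is satisfied.

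I expect the only genuinely delicate point to be the bookkeeping in the previous paragraph: one must ensure that the prime-number-theorem error term coming from Lemma~\ref{le:li-inverse} is of strictly smaller order than the main term $m^{2\alpha-1}\exp(m^\alpha)$ (rather than merely comparable to it), and then keep track of the powers of $m$ carefully enough to see that, once the exponentials $e^{m^\alpha}$ cancel, summability is governed by the polynomial factor $m^{5\alpha-3}$. This is what pins down the admissible range $\alpha\in(0,2/5)$; everything else is a direct appeal to Lemmas~\ref{le:E1}, \ref{le:E2} and~\ref{le:li-inverse}.
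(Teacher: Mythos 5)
Your proposal is correct and follows essentially the same route as the paper: verify the structural constraints on $(r_m)$, check the summability hypothesis of Lemma~\ref{le:E1} (which holds for any $\alpha>0$), and use Lemma~\ref{le:li-inverse} to bound $p_{r_{m+1}}-p_{r_m}$ so that the general term of \eqref{eq:rcond} reduces to $m^{5\alpha-3}$, summable precisely for $\alpha<2/5$. Your write-up just makes explicit the bookkeeping and the final Cauchy--Schwarz step that the paper leaves to the reader.
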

\begin{proof}
We first recall the condition of Lemma \ref{le:E1} - namely that the first error term converges as soon as
 \begin{equation}\label{e15}
\sum_{m=1}^\infty \big(r_{m+1}-r_m\big)^{-a_4} <\infty.
\end{equation}
Lemma \ref{le:li-inverse}  yields for our sequences that $p_{r_{m+1}}-p_{r_m}\lesssim (r_{m+1}-r_m)\log r_m + r_me^{-\sqrt{\log r_m}}.$ By plugging this into condition
\eqref{eq:rcond} we see that a sufficient condition to apply Lemma \ref{le:E2} in order to control the second error term is given by the pair of conditions
\begin{equation}\label{eq:205}
 \sum_{m=1}^\infty\bigg(\frac{r_{m+1}-r_m}{r_m}\bigg)^3\log^2 (r_m)<\infty\quad\textrm{and}\quad
  \sum_{m=1}^\infty e^{-3\sqrt{\log r_m}}<\infty.
\end{equation} 
Finally, it remains to observe that the choice $r_m=\lfloor \exp (m^\alpha)\rfloor$ satisfies both \eqref{e15} and \eqref{eq:205} as soon as $\alpha\in (0,2/5).$
\end{proof}

To complete the approximation procedure, we verify that the fields $G_N$ are good approximations also for indices
$N$ inside the interval $r_m\leq N<r_{m+1}$.
\begin{theorem}\label{th:g-approx} Denote the total error of the Gaussian approximation by setting
\begin{equation}\label{eq:301}
E_N(x):=X_N(x)-G_N(x)\quad \textrm{for}\quad N\geq 1\quad\textrm{and}\quad x\in(0,1).
\end{equation}
Then, almost surely,  $E_{N}(x)$ converges uniformly on $(0,1)$ to a continuous limit function
$$
E(x):=\lim_{N\to\infty} E_{N}(x),
$$
where the obtained limit is of course the same as in Proposition{\rm  \ref{pr2}}.
Moreover, it holds that
\begin{equation}\label{eq:204v}
\E \exp\big(\lambda \|  E\|_{L^{\infty}(0,1)}\big)<\infty \quad \textrm{and}\quad \E \exp\big(\lambda \sup_{N\geq 1}\|E_{N}\|_{L^{\infty}(0,1)}\big)<\infty\quad  \textrm{for all}\quad \lambda >0.
\end{equation}
\end{theorem}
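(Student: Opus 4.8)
The plan is to bootstrap from the subsequence case already established. Unwinding the definitions, $\widetilde E_{1,n}(x)+\widetilde E_{2,n}(x)=\sum_{m=1}^n\big(Y_m(x)-Z_m(x)\big)=X_{r_{n+1}-1}(x)-G_{r_{n+1}-1}(x)=E_{r_{n+1}-1}(x)$, so Proposition~\ref{pr2} is exactly Theorem~\ref{th:g-approx} restricted to indices $N=r_{n+1}-1$, including the exponential bound. It remains to fill the gaps: for $r_m\le N<r_{m+1}$ I would write $E_N=E_{r_m-1}+(E_N-E_{r_m-1})$, where $E_N-E_{r_m-1}=\sum_{k=r_m}^N\frac1{\sqrt{p_k}}\big(\cos(x\log p_k)\cos\theta_{p_k}+\sin(x\log p_k)\sin\theta_{p_k}\big)-\sum_{k=r_m}^N\frac1{\sqrt{2p_k}}\big(W_k^{(1)}\cos(x\log p_k)+W_k^{(2)}\sin(x\log p_k)\big)$ is a partial-block error, and aim to show that $Q_m:=\sup_{r_m\le N<r_{m+1}}\|E_N-E_{r_m-1}\|_{L^\infty(0,1)}$ tends to $0$ almost surely and that $\sup_{m\ge1}Q_m$ has finite exponential moments of every order. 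Since $\sup_{N\ge1}\|E_N\|_{L^\infty(0,1)}\le\sup_m\|E_{r_m-1}\|_{L^\infty(0,1)}+\sup_mQ_m$ and $\|E\|_{L^\infty(0,1)}\le\sup_N\|E_N\|_{L^\infty(0,1)}$, this, combined with Proposition~\ref{pr2} and H\"older's inequality, gives both assertions and identifies the limit with the $E$ of Proposition~\ref{pr2}.

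To estimate $Q_m$ I would rerun the freezing decomposition of Lemmas~\ref{le:E1} and~\ref{le:E2}, but now one partial block at a time. With the frozen partial sums $\widetilde Y_m^{(N)}(x):=\cos(x\log p_{r_m})\sum_{k=r_m}^N\frac1{\sqrt{p_k}}\cos\theta_{p_k}+\sin(x\log p_{r_m})\sum_{k=r_m}^N\frac1{\sqrt{p_k}}\sin\theta_{p_k}$ and $\widetilde Z_m^{(N)}$ defined the same way with $\cos\theta_{p_k},\sin\theta_{p_k}$ replaced by $2^{-1/2}W_k^{(1)},2^{-1/2}W_k^{(2)}$, I would split $E_N-E_{r_m-1}$ into the partial-block freezing error of the $\theta$-field, that of the $W$-field, and $\widetilde Y_m^{(N)}-\widetilde Z_m^{(N)}$. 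The first two are partial sums in $N$ of independent symmetric $C(0,1)$-valued summands, so L\'evy's inequality dominates (in distribution) their $N$-suprema by the full-block freezing errors $\|Y_m-\widetilde Y_m\|_{L^\infty}$ and $\|\widetilde Z_m-Z_m\|_{L^\infty}$, which the proof of Lemma~\ref{le:E2} (via \eqref{eq:193}) already controls. For the third, since $\|\cos(x\log p_{r_m})a+\sin(x\log p_{r_m})b\|_{L^\infty(0,1)}\le\sqrt{a^2+b^2}$, writing each partial sum $\sum_{k=r_m}^N$ as the full-block sum minus a suffix $\sum_{k=N+1}^{r_{m+1}-1}$ and recalling $\sum_{k=r_m}^{r_{m+1}-1}\frac1{\sqrt{2p_k}}W_k^{(i)}=b_mV_m^{(i)}$, I would bound $\|\widetilde Y_m^{(N)}-\widetilde Z_m^{(N)}\|_{L^\infty}$ by $|(C_m,S_m)-b_m(V_m^{(1)},V_m^{(2)})|$ plus the $N$-suprema of the $\theta$- and $W$-suffix sums within block $m$. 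A further application of L\'evy's inequality (to the backward partial sums) together with Azuma's inequality shows these suffix suprema are sub-Gaussian with variance proxy $\lesssim b_m^2$, and $b_m^2=\frac12\sum_{k=r_m}^{r_{m+1}-1}p_k^{-1}\lesssim1/m$ by the prime number theorem for the sequence of Proposition~\ref{pr2}.

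Assembling: for the almost sure convergence Borel--Cantelli suffices, because $\E\|Y_m-\widetilde Y_m\|_{L^\infty}^2$ and $\E\|\widetilde Z_m-Z_m\|_{L^\infty}^2$ are summable in $m$ (they sum to a constant multiple of the series in \eqref{eq:rcond}, which converges for the sequence of Proposition~\ref{pr2} by \eqref{eq:205}), $\E|(C_m,S_m)-b_m(V_m^{(1)},V_m^{(2)})|$ is summable by Proposition~\ref{le2}(ii), and the suffix suprema have Gaussian tails with a parameter shrinking like $m^{-1/2}$. For the exponential moment of $\sup_mQ_m$, the term $\sup_m|(C_m,S_m)-b_m(V_m^{(1)},V_m^{(2)})|$ is covered directly by Lemma~\ref{le:E1}, and the remaining four families I would handle with the union bound $\Prob(\sup_m(\cdot)>t)\le\sum_m\Prob((\cdot)_m>t)$. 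This is the delicate point and, I expect, the main obstacle: a merely $L^2$-summable per-block bound, or a uniform non-shrinking sub-Gaussian per-block bound, is not enough, since $\sum_mb_m^2=\infty$. One has to carry the refined estimates $\E\big(\exp(a\|Y_m-\widetilde Y_m\|_{L^\infty}^2)-1\big)\lesssim\gamma_m$ with $\sum_m\gamma_m<\infty$, obtained by running the Azuma/L\'evy argument of Lemma~\ref{le:E2} one block at a time, together with the decay $b_m^2=\mathcal{O}(1/m)$ for the suffix families; both force $\sum_m\Prob((\cdot)_m>t)$ to decay like $e^{-ct^2}$ instead of staying bounded, so that $\sup_m$ of each family has a Gaussian tail and hence all exponential moments. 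Combining with the estimate for $\sup_m\|E_{r_m-1}\|_{L^\infty(0,1)}$ from Proposition~\ref{pr2} yields \eqref{eq:204v}.
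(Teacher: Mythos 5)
Your proposal is correct and follows the same overall strategy as the paper: bootstrap from Proposition \ref{pr2} along the full-block subsequence, control the within-block fluctuations via L\'evy's and Azuma's inequalities together with the Sobolev embedding $\|\cdot\|_{L^\infty}\lesssim\|\cdot\|_{W^{1,2}}$, and sum the resulting tail bounds over $m$ using the fact that the per-block variance proxy decays polynomially in $m$. The one place where you overcomplicate matters is the second paragraph: there is no need to re-run the freezing decomposition inside partial blocks or to exploit any cancellation between the $\theta$- and $W$-parts. The paper simply bounds $\sup_{r_m\le N<r_{m+1}}\|X_N-X_{r_m-1}\|_{L^\infty(0,1)}$ and the analogous Gaussian quantity separately, using Azuma (resp.\ elementary Gaussian estimates) with the variance proxy $\sum_{k=r_m}^{r_{m+1}-1}\log^2(p_k)/p_k\lesssim m^{2\alpha-1}\le m^{-1/5}$ for $\alpha<2/5$; this already gives $\Prob(\max_{r_m\le u<r_{m+1}}\|\sum_{k=r_m}^uA_k\|_{L^\infty}>\lambda)\lesssim\exp(-cm^{1/5}\lambda^2)$, which sums over $m$ to $\lesssim\exp(-c'\lambda^2)$ for $\lambda\ge1$. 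In particular the ``delicate point'' you flag at the end is a non-issue on the paper's route: the relevant per-block parameter is not $b_m^2$ summed over $m$ but the individual $m^{2\alpha-1}\to0$, and its polynomial decay is exactly what makes the union bound close. Your more refined decomposition would also work (your fix via $\E(\exp(a\|\cdot\|^2)-1)\lesssim\gamma_m$ with $\sum_m\gamma_m<\infty$ is sound), but it buys nothing here.
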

\begin{proof}
After proposition \ref{pr2} it is enough to show that any given partial sum of the original series is in fact well approximated by the sum of the blocks below it, and that a similar statement holds also true for the Gaussian approximation series.  Let us fix $m\geq 1$ and recall our notation
$$
Y_m(x)=\sum_{k=r_m}^{r_{m+1}-1}\frac{1}{\sqrt{p_k}}\Big(\cos(\theta_{p_k})\cos \big(\log p_k x\big)+\sin(\theta_{p_k})\sin \big(\log p_k x\big)\Big)
\;=:\;\sum_{k=r_m}^{r_{m+1}-1}A_k(x), 
$$
which is just the partial sum of our original field $X_N$ corresponding to the $m$:th block.  Observing first that
\begin{eqnarray*}
\sum_{k=r_m}^{r_{m+1}-1}\frac{\log^2 p_k}{p_k}&\lesssim& \sum_{k=\lfloor e^{m^\alpha}\rfloor}^{\lfloor e^{(m+1)^\alpha}\rfloor}\frac{\log^2 k}{k\log k}\lesssim  \log \lfloor e^{(m+1)^\alpha}\rfloor \log\Big( \exp \big((m+1)^\alpha-m^\alpha\big) \Big)\lesssim m^{\alpha}m^{\alpha-1} \\&\lesssim &m^{-1/5},
\end{eqnarray*}
 Azuma's inequality yields 
$$
\Prob (|Y'_m(x)|\geq\lambda)\lesssim \exp \Big(-c'\lambda^2\Big(\sum_{k=r_m}^{r_{m+1}-1}\frac{\log^2 p_k}{p_k}\Big)^{-1}\Big)
\lesssim \exp \big(-c'\lambda^2m^{1/5}\big).
$$
In particular, we  obtain that for some constants $c'', C$ that work for all $x\in (0,1)$ we have 
$$
\E \exp(c''m^{1/5}|Y'_m(x)|^2)\leq C.
$$
A similar estimate holds with $Y_m(x)$ in place of $Y'_m(x).$
As in the proof \eqref{eq:182}  (see \eqref{eq:sobolev}, \eqref{e21} and \eqref{e22}) we deduce that $\E \exp(c'''m^{1/5}\|Y_m(x)\|^2_{L^\infty(0,1)})\leq C$, and again   Levy's inequality enables us to gather that
\begin{equation}\label{eq221}
\Prob \big(\max_{r_m\leq u\leq r_{m+1}-1}\|\sum_{k=r_m}^{u}A_k\|_{L^\infty(0,1)}>\lambda\big) \lesssim \exp(-c'''m^{1/5}\lambda^2).
\end{equation}
Summing over $m$  yields for $\lambda \geq 1$
\begin{equation}\label{eq222}
\Prob \big(\sup_{m\geq 1}\max_{r_m\leq u\leq r_{m+1}-1}\|\sum_{k=r_m}^{u}A_k\|_{L^\infty(0,1)}>\lambda\big) \lesssim \sum_{m=1}^\infty \exp(-c'''m^{1/5}\lambda^2)\lesssim \exp(-c''''\lambda^2).
\end{equation}
Exactly  the same proof where Azuma is replaced by elementary estimates for Gaussian variables  yields the corresponding estimate  for our Gaussian approximation fields.  An easy Borel-Cantelli argument that uses estimates like \eqref{eq222} in combination with Proposition \ref{pr2}  then shows the existence of the uniform limit $E(x)=\lim_{N\to\infty}E_N(x).$  Finally, combining  \eqref{eq222} with \eqref {eq:204}  yields  \eqref{eq:204v}.
Together with our previous considerations this concludes the proof of the theorem.
\end{proof}

\section{Convergence to a chaos measure and multifractality in the subcritical case}

For a proper introduction to the theory of Gaussian multiplicative chaos, we refer the reader to Kahane's original work \cite{Kahane} or the recent review by Rhodes and Vargas \cite{RV}. For the convenience of the reader, we nevertheless recall the main results from the theory that are relevant to us. 

\begin{theorem}\label{th:gmc}
Assume that we have a sequence of independent Gaussian fields $(Y_k)_{k=1}^\infty$ on $[0,1]$ and the covariance kernel of $Y_k$ is $K_{Y_k}$, where $K_{Y_k}$ is continuous on $[0,1]$. Define the field

\begin{equation*}
X_n=\sum_{k=1}^n Y_k,
\end{equation*}

\noindent and assume that the covariance kernel $K_{X_n}$ converges as $n\to \infty$  locally uniformly in $[0,1]^2\setminus \{x=y\}$  to a function on $[0,1]^2$ which is of the form

\begin{equation*}
\log \frac{1}{|x-y|}+g(x,y),
\end{equation*}

\noindent where $g$ is bounded and continuous. Moreover, assume that   there is a constant $C<\infty$ so that
\begin{equation}\label{eq:ylaraja}
K_{X_n}(x,y)\leq \log \frac{1}{|x-y|}+ C \quad {\rm for \; all}\quad  x,y\in [0,1]\quad\mathrm {and}\quad n\geq 1.
\end{equation}
Then for $\beta>0$ the random measure 

\begin{equation*}
\nu_{\beta,n}(dx)=\frac{e^{\beta X_n(x)}}{\E e^{\beta X_n(x)}}dx
\end{equation*}

\noindent converges almost surely with respect to the topology of weak convergence of measures to a non-trivial limiting measure $\nu_\beta$. This limiting measure is a non-trivial random measure for $\beta<\beta_c=\sqrt{2}$ and for $\beta\geq \beta_c$, it is the zero measure. Moreover, if $0<\beta<\sqrt{2}$, and $0<p< 2/\beta^2$, then for a compact set $A\subset [0,1]$

\begin{equation*}
\E (\nu_\beta(A)^p)<\infty.
\end{equation*}

Also for $q\in[0,2/\beta^2)$

\begin{equation*}
\lim_{r\to 0}\frac{\log \E (\nu_\beta(B(x,r))^q)}{\log r}=(1+\frac{\beta^2}{2})q-\frac{\beta^2}{2}q^2.
\end{equation*}

\end{theorem}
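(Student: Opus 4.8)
We follow the by-now standard route to Gaussian multiplicative chaos \cite{Kahane,RV,Berestycki}. First, the martingale structure: since $Y_{n+1}$ is independent of $(Y_1,\dots ,Y_n)$ and (the fields being centered) $\E e^{\beta X_n(x)}=e^{\frac{\beta^2}{2}K_{X_n}(x,x)}$, a direct computation shows that for every fixed nonnegative $f\in C[0,1]$ the sequence $\nu_{\beta,n}(f)$ is a nonnegative martingale with respect to $\mathcal{F}_n=\sigma(Y_1,\dots ,Y_n)$; by the usual separability argument for measure-valued martingales there is an a.s. weak-$*$ limit $\nu_\beta$. The real content is then (a) non-triviality for $\beta<\sqrt2$, (b) the moment and scaling bounds, and (c) degeneracy for $\beta\geq\sqrt2$.

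The engine behind (a) and (b) is a single uniform-in-$n$ estimate of multifractal type,
\[
\E\,\nu_{\beta,n}(B(x,r))^{p}\;\leq\;C_p\,r^{\,\zeta(p)},\qquad \zeta(p):=\Big(1+\tfrac{\beta^2}{2}\Big)p-\tfrac{\beta^2}{2}p^2,
\]
valid uniformly in $x\in[0,1]$, $r\in(0,1)$ and $n\geq1$, for all $0\leq p<2/\beta^2$. For $p\leq1$ this is immediate from concavity and $\E\nu_{\beta,n}(B(x,r))=2r$. For $1<p<2/\beta^2$ one runs the classical recursive dyadic argument: split a dyadic interval into its two halves, apply the elementary bound $(a+b)^p\leq a^p+b^p+C_p(a^{p-1}b+a b^{p-1})$, and estimate the mixed moments $\E[\nu_{\beta,n}(I)^{p-1}\nu_{\beta,n}(J)]$ for adjacent dyadic intervals using that $K_{X_n}$ is bounded on $I\times J$ --- this is precisely where the uniform upper bound \eqref{eq:ylaraja} together with the separation of $I$ and $J$ enters, via Kahane's convexity inequality (Gaussian comparison with a field whose two halves are independent). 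The resulting renewal inequality has solution $r^{\zeta(p)}$, and the recursion closes exactly when $\zeta(p)<p$, i.e. when $p<2/\beta^2$. Taking $B(x,r)=[0,1]$ gives $\sup_n\E\nu_{\beta,n}([0,1])^{p}<\infty$ for some $p>1$ when $\beta<\sqrt2$; hence the martingale $\nu_{\beta,n}(f)$ is uniformly integrable, so $\E\nu_\beta(f)=\lim_n\E\nu_{\beta,n}(f)=\int_0^1 f$ and $\nu_\beta$ is non-trivial, while Fatou upgrades the displayed bound to $\nu_\beta$ itself, giving in particular $\E\nu_\beta(A)^p<\infty$ for $p<2/\beta^2$.

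For (c), degeneracy at and above $\beta=\sqrt2$, the cleanest route is to compare $X_n$ on a small interval, via Kahane's convexity inequality, with an exactly $\star$-scale-invariant log-correlated field (legitimate since $K_{X_n}\to\log\frac1{|x-y|}+g$ with $g$ bounded and continuous, so \eqref{eq:ylaraja} gives the upper comparison and the off-diagonal convergence a matching lower one on compacts), and to invoke that the latter chaos vanishes for $\beta\geq\sqrt2$ \cite{Kahane,RV}; alternatively the standard "good event" truncation works, since $K_{X_n}(x,x)\to\infty$. For the scaling statement,
\[
\lim_{r\to0}\frac{\log\E\nu_\beta(B(x,r))^q}{\log r}=\zeta(q),\qquad q\in[0,2/\beta^2),
\]
the upper bound is exactly the displayed moment estimate. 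The matching lower bound $\E\nu_\beta(B(x,r))^q\gtrsim r^{\zeta(q)}$ is obtained by a Girsanov (Cameron--Martin) tilt: weighting the law of the field by $e^{q\beta X_n(x)}/\E e^{q\beta X_n(x)}$ shifts the mean of $X_n(y)$ by $q\beta K_{X_n}(x,y)\approx q\beta\log\frac1{|x-y|}$, inflating $\nu_{\beta,n}$ near $x$ by a factor $\sim|x-y|^{-q\beta^2}$; combined with the normalizing constant $e^{\frac{q^2\beta^2}{2}K_{X_n}(x,x)}$ this produces exactly the exponent $\zeta(q)$ after restricting to a slightly smaller ball and passing to the limit (boundedness and continuity of $g$ ensuring the scaling holds up to constants). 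Equivalently one reduces again to the exactly scale-invariant case and quotes the multifractal formula of \cite{RV,RV2}.

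I expect the genuine obstacle to be the range $1<p<2/\beta^2$ of the moment bound --- making the recursive estimate close with the exact exponent $\zeta(p)$ and the sharp threshold $2/\beta^2$, which requires careful bookkeeping of the decorrelation across the dyadic split --- together with the matching lower bound in the scaling statement; the martingale convergence, the case $p\leq1$, and the degeneracy for $\beta\geq\sqrt2$ are routine by comparison.
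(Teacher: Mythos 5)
Your proposal is correct and follows essentially the same route as the paper, whose own proof is only a two-line sketch: compare $\nu_{\beta,n}$ to a reference chaos via Kahane's convexity inequality using the uniform bound \eqref{eq:ylaraja}, deduce that $\nu_{\beta,n}([0,1])$ is an $L^p$-bounded martingale for some $p>1$ when $\beta<\sqrt2$, and then invoke the standard multiplicative chaos theory of \cite{RV} for the degeneracy and the multifractal scaling (your dyadic recursion and Girsanov tilt are just the internals of those cited results). One small inaccuracy: for $p\leq 1$ concavity only yields $\E\nu_{\beta,n}(B(x,r))^p\lesssim r^{p}$, not $r^{\zeta(p)}$ (note $\zeta(p)\geq p$ there), so the $q<1$ upper bound in the scaling statement really does require the reduction to the exactly scale-invariant case or the tilting argument that you offer as the alternative.
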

\begin{proof} (Sketch) By \eqref{eq:ylaraja} and Kahane's convexity inequality (see \cite[Theorem 2.1]{RV}) one may easily compare to a standard approximation of a chaos measure and deduce  that for any $\beta<\beta_c$ the random variables $\nu_{\beta, n} ([0,1])$ form an $L^p$-martingale. At this stage the standard theory of multiplicative chaos can be applied to obtain the rest of the claims, see e.g.  \cite[Theorems 2.5,2.11, and 2.14]{RV}. 

\end{proof}

To apply Kahane's construction of a Gaussian multiplicative chaos measure, we'll need to establish that the covariance of our Gaussian field satisfies the requirements of Theorem \ref{th:gmc}. Let us introduce some notation for  the covariance of the $N$:th partial sum of the Gaussian approximation field
$$
G_N(x)=\sum_{j=1}^N\frac{1}{\sqrt{2p_j}}\left(W^{(1)}_j\cos(x\log p_j)+W^{(2)}_j\sin(x\log p_j)\right).
$$
A direct computation shows that
$$
K_{ G_n}(x-y):=\E G_n(x)G_n(y) =\psi_N(x-y),
$$
where 
$$
\psi_N(u):=\frac{1}{2}\sum_{j=1}^N\frac{\cos(u\log p_j)}{p_j}.
$$

The following result is enough for us to be able to apply Kahane's theory for defining a multiplicative chaos measure. It is of interest to note that we are dealing with a logarithmically correlated translation invariant field whose covariance deviates from $\frac{1}{2}\log (1/|x-y|)$ by only  a  smooth function.

\begin{lemma}\label{le:covariance_approximation}
We have 
$$
\Big| K_{ G_N}(x,y)-\frac{1}{2}\log\Big(\min \Big(\frac{1}{|x-y|},\log N \Big)\Big)\Big|\;\leq \; C,
$$
where $C$ is uniform over $n\geq 1$ and $(x,y)\in (0,1).$ Moreover, if $x\not=y$
$$
K_{ G_n}(x,y)\longrightarrow K_G(x,y)=\frac{1}{2}\log\left(\frac{1}{|x-y|}\right)+g(x-y)\quad\textrm{as}\quad {n\to\infty},
$$
with  local uniform convergence outside the diagonal. Moreover $g\in C^\infty (-2,2)$. A fortiori, the limit field $G$ is logarithmically correlated and translation invariant. 
\end{lemma}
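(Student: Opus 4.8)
The plan is to compute the covariance $K_{G_N}(x,y)=\psi_N(u)$, $u:=x-y\in(-1,1)$, asymptotically, by reducing the oscillatory prime sum defining $\psi_N$ to an elementary integral via partial summation against Mertens' theorem in quantitative form, and then evaluating that integral. We may assume $N$ large throughout, as for bounded $N$ the field is a finite sum and nothing is to be proved. The number-theoretic input is that, by partial summation from the prime number theorem with classical error term (equivalently, from Lemma~\ref{le:li-inverse}),
$$\sum_{p\le t}\frac1p=\log\log t+M+\epsilon(t),\qquad |\epsilon(t)|\lesssim e^{-c\sqrt{\log t}}\quad(t\ge 2),$$
with $M$ the Mertens constant. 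Writing $\sum_{j\le N}p_j^{-1}\cos(u\log p_j)$ as a Riemann--Stieltjes integral of $\cos(u\log t)$ against $t\mapsto\sum_{p\le t}p^{-1}$ and integrating by parts once, one obtains
$$\sum_{j=1}^{N}\frac{\cos(u\log p_j)}{p_j}=\int_{\log 2}^{\log p_N}\frac{\cos(uv)}{v}\,dv+c_0\cos(u\log 2)+r_N(u),$$
where $c_0$ is an explicit constant (built from $M$ and $\log\log 2$) and $r_N(u)=u\int_2^{p_N}\sin(u\log t)\,\epsilon(t)\,t^{-1}\,dt$. Since $|u|<1$ and $\int_2^\infty |\epsilon(t)|\,t^{-1}\,dt<\infty$, the term $r_N(u)$ is bounded uniformly in $N$ and $u$, converges as $N\to\infty$, and --- differentiating under the integral sign, each $u$-derivative costing only a power of $\log t$ which is absorbed by $\epsilon(t)$ --- its limit $r_\infty$ lies in $C^\infty(\R)$.

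Next one evaluates the main term. From $p_N\sim N\log N$ one gets $\log p_N=\log N+O(1)$ and $\big|\int_{\log N}^{\log p_N}\cos(uv)\,v^{-1}\,dv\big|\le\log(\log p_N/\log N)=O(1)$, so it suffices to treat $\int_{\log 2}^{\log N}\cos(uv)\,v^{-1}\,dv$. Substituting $w=|u|v$ (allowed as $\cos$ is even) turns this into $\int_{|u|\log 2}^{|u|\log N}\frac{\cos w}{w}\,dw$; splitting at $w=1$ and using $\frac{\cos w}{w}=\frac1w+O(w)$ near $0$ on the one hand, and the classical uniform bound $\sup_{0<a<b}\big|\int_a^b\frac{\cos w}{w}\,dw\big|<\infty$ on the other, a short case distinction (according as $|u|\log N\le 1$ or $>1$) gives
$$\int_{\log 2}^{\log N}\frac{\cos(uv)}{v}\,dv=\log\Big(\min\big(\tfrac1{|u|},\log N\big)\Big)+O(1)$$
uniformly in $N$ and $u$. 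Combining the last three displays and halving yields the first assertion of the lemma with $C$ independent of $N$ and $(x,y)$.

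For the limit, the integration-by-parts bound $\big|\int_A^B\cos(uv)\,v^{-1}\,dv\big|\le 2/(|u|A)$ shows, for $u\ne 0$, that $\int_{\log 2}^{\log p_N}\cos(uv)\,v^{-1}\,dv$ converges as $N\to\infty$, locally uniformly in $u$ bounded away from $0$, to $\int_{\log 2}^\infty\cos(uv)\,v^{-1}\,dv=-\mathrm{Ci}(|u|\log 2)$. Using $\mathrm{Ci}(z)=\gamma+\log z-\mathrm{Cin}(z)$ with $\mathrm{Cin}(z)=\int_0^z\frac{1-\cos w}{w}\,dw$ even and entire, we get $-\mathrm{Ci}(|u|\log 2)=\log\tfrac1{|u|}+\text{const}+\mathrm{Cin}(|u|\log 2)$, and $\mathrm{Cin}(|u|\log 2)$ is a real-analytic function of $u$ (only even powers of $u$ occur). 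Hence $K_{G_N}(x,y)=\psi_N(x-y)\to \tfrac12\log\tfrac1{|x-y|}+g(x-y)$ locally uniformly off the diagonal, where $g$ is the sum of a constant, the entire function $\tfrac{c_0}{2}\cos(u\log 2)$, the real-analytic term $\tfrac12\mathrm{Cin}(|u|\log 2)$, and $\tfrac12 r_\infty(u)$; in particular $g\in C^\infty(-2,2)$. Translation invariance is manifest, and the stated asymptotics are exactly the statement that $G$ is logarithmically correlated. The one point demanding care is the uniformity in $u$ of all error terms, which is supplied precisely by the quantitative prime number theorem and by the elementary estimates on $\int\frac{\cos w}{w}\,dw$.
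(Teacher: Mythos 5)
Your proof is correct, but it follows a genuinely different route from the paper's. The paper never performs Abel summation against Mertens' theorem: it instead replaces $p_j$ termwise by $j\log j$ (using only $p_j=j\log j+O(j\log\log j)$), passes from the sum to an integral via a derivative bound, and changes variables to arrive at the same main term $\tfrac12\int_u^{u\log N}\cos(x)x^{-1}dx$, which it then analyzes by the same elementary case distinction you use. The more substantial divergence is in the smoothness of $g$: the paper obtains it by identifying $\psi(u)=\tfrac12\mathrm{Re}\big(\log\zeta(1+iu)-A(u)\big)$ via the Euler product, with $A$ the absolutely convergent higher-prime-power sum, and then invoking real-analyticity of $\log\zeta(1+iu)$ off $u=0$ (the $\tfrac12\log(1/|u|)$ singularity coming from the pole of $\zeta$ at $1$); you instead get it directly, writing the explicit part of the limit in terms of $\mathrm{Ci}$ and the entire even function $\mathrm{Cin}$, and obtaining smoothness of the remainder $r_\infty$ by differentiating under the integral against the rapidly decaying Mertens error $\epsilon(t)$. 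Your argument is more self-contained (no zeta function needed), at the price of requiring the quantitative form $\epsilon(t)=O(e^{-c\sqrt{\log t}})$ throughout, including for the $C^\infty$ claim; the paper's main-term analysis needs only the weaker asymptotics for $p_j$, though it too relies on the strong error term elsewhere. One bookkeeping remark: your Abel summation identity, with $r_N$ as defined, is missing the boundary term $\cos(u\log p_N)\,\epsilon(p_N)$ coming from the cancellation of $\cos(u\log p_N)(\log\log p_N+M)$ against the integrated-by-parts main part; since this term and all its $u$-derivatives tend to $0$ uniformly, it affects nothing, but it should be carried along or explicitly discarded.
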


Before proving the lemma, let us note that Theorem \ref{th: gaussian_appro} is a direct consequence of  this Lemma and Theorem \ref{th:g-approx}.

\begin{proof} We shall employ the notation where $z\add \widetilde{z}$ for given quantities $z=z_N(u),\widetilde z=\widetilde z_N(u)$ stands for the uniform inequality $|z_N(u)-\widetilde z_N(u)|\leq C$ with a universal bound $C$ and such that $\lim_{N\to\infty} (z_N(u)-\widetilde z_N(u))$ converges uniformly to a continuous function on the interval $u\in [-2,2].$ We shall employ the well-known asymptotics

\begin{equation}\label{eq:pr_as}
p_j=j\log j +O(j\log\log j).
\end{equation}
This implies that $\sum_{j=1}^\infty \frac{|\log p_j-\log (j\log j)|}{p_j}<\infty$ and since the cosine function is 1-Lipschitz we obtain
$$
\psi_N(u) \add \frac{1}{2}\sum_{j=1}^N\frac{\cos\big(u\log (j\log j)\big)}{p_j}.
$$
In a similar vein, $\sum_{j=1}^\infty \big|p_j^{-1}-(j\log j)^{-1}\big|<\infty$ which leads to
\begin{equation}\label{eq:700}
\psi_N(u) \add \frac{1}{2}\sum_{j=1}^N\frac{\cos\big(u\log (j\log j)\big)}{j\log j}.
\end{equation}
Next we observe that for all $u\in [-2,2]$ and $x\geq 10$
$$
 \left|\frac{d}{dx} \left(\frac{\cos\big(u\log (x\log x)\big)}{x\log x}\right)\right| 
 \;\leq \; \frac{6}{x^2\log x}.
$$
Since $\int_{10}^\infty (x^2\log x)^{-1}dx <\infty ,$ it follows that
\begin{equation}\label{eq:701}
\psi_N(u) \add \frac{1}{2}\int_{x=10}^N\frac{\cos\big(u\log (x\log x)\big)dx}{x\log x}.
\end{equation}
To continue, we note that
$$
\int_{x=10}^\infty\left| 1-\frac{1+\log x}{\log x+\log\log x}\right|\frac{dx}{x\log x}<\infty
$$
so that
\begin{eqnarray}\label{eq:702}
\psi_N(u) &\add& \frac{1}{2}\int_{x=10}^N\frac{\cos\big(u\log (x\log x)\big)}{\log(x\log x)}\frac{(1+\log x)dx}{x\log x}\nonumber\\
&\add& \frac{1}{2}\int_{1}^{\log N+\log\log N}\frac{\cos(ut)}{t}dt\add  \frac{1}{2}\int_{1}^{\log N}\frac{\cos(ut)}{t}dt\\
&=& \frac{1}{2}\int_{u}^{u\log N}\frac{\cos(x)}{x}dx\; =:\; A(u,N).\nonumber
\end{eqnarray}
Above in the first step we performed the change of variables $u=\log(x\log x)$ and noted that $du=(1+\log x)dx/x\log x$.  In the second to last step noted that $\int_{\log N}^{\log N+\log\log N}t^{-1}dt =o(1)$ as $N\to\infty.$

It remains to prove the claim for $A(u, N)$ defined in \eqref{eq:702}. 
Since the limit $\lim_{z\to\infty} \int_{1}^{z}\frac{\cos(x)}{x}dx$  exists and is finite, we see
directly from the definition that for any $\varepsilon_0 >0$ in the set $\{ \varepsilon_0\leq |u|\leq 2\}$ the function $A(u,N)$ converges uniformly to a continuous function of $u$ as $N\to\infty$. Moreover, since $\int_0^1|\cos (x)-1|x^{-1}dx<\infty,$ we get  for $|u|\geq (\log N)^{-1}$
$$
|A(u,N)-\int_u^1x^{-1}dx|=|A(u,N)-\log(1/u)|\leq C,
$$
where $C$ is independent of $N$ and $u$. 
Finally, if $|u|\leq(\log N)^{-1}$ we get in a similar manner
$$
|A(u,N)-\int_u^{u\log N}x^{-1}dx|=|A(u,N)-\log\log N|\leq C',
$$
and now $C'$ is independent of $N$ and $u\in \{|u|\geq (\log N)^{-1}\}.$ This proves the first statement of the lemma.

By \eqref{eq:702} we deduce that there is a continuous function $\widetilde b(u)$ on $[-2,2]$ so that the limit $\psi$ of the functions $\psi_N$ takes the form 
\begin{eqnarray}\label{eq:704}
\psi (u)= \widetilde b(u)+ \frac{1}{2}\int_{u}^{\infty}\frac{\cos(t)}{t}dt{u} =
\frac{1}{2}\log\left(|u|^{-1}\right)+ b(u)\quad\textrm{for}\quad 0<|u|<2,
\end{eqnarray}
with $b\in C([-2,2])$  as $u\mapsto \int_0^u(\cos (x)-1)x^{-1}dx$ is continuous over $x\in [-2,2].$  Especially, we know that $\psi (x-y)$ yields the covariance operator of our limit field since the estimates we have proven show that $\psi_N(x-y)\mapsto\psi(x-y)$ in $L^2([0,1]^2)$, and convergence in the Hilbert-Schmidt norm is enough to identify the limit covariance of a sequence of Gaussian fields converging a.s. in the sense of distributions. We still want to upgrade $b$ to be smooth. For that end we first fix $\delta_0 >0$ and observe that what we have proved up to now (see especially \eqref{eq:702} ) yields that  we have 
\begin{equation}\label{eq:705}
\psi (u)=\frac{1}{2} {\rm Re\,}\left(\lim_{N\to\infty} \sum_{j=1}^Np_j^{-1-iu}\right)
\end{equation}
with uniform convergence in the set $\{ \delta_0\leq |u|\leq 2\}$. However, if we apply exactly the same argument as above to the sum ${\rm Re\,}\big(\sum_{j=1}^Np^{-1-\varepsilon -iu}\big)$
for, say, $\varepsilon\in [0,1/2],$ we obtain uniform (in $\varepsilon$) estimates for the convergence of the series
$$
{\rm Re\,}\left(\sum_{j=1}^\infty p_j^{-1-\varepsilon -iu}\right)
$$
for any fixed $u\in (0,2)$.
Especially, we deduce by invoking the logarithm of the Euler product of the Riemann zeta function that 
\begin{eqnarray}\label{eq:706}
\psi (u)&=&\lim_{\varepsilon \to 0^+} \frac{1}{2}{\rm Re\,}\left(\sum_{j=1}^\infty p_j^{-1-\varepsilon -iu}\right)\\
&=&\lim_{\varepsilon \to 0^+} \frac{1}{2}{\rm Re\,}\left(\zeta(1+\varepsilon+iu)-\sum_{k=2}^\infty\sum_{j=1}^\infty k^{-1}p_j^{-k(1+\varepsilon +iu)}\right)\nonumber\\
&=& \frac{1}{2}{\rm Re\,}\left(\zeta(1+iu)-\sum_{k=2}^\infty\sum_{j=1}^\infty k^{-1}p_j^{-k(1+iu)}\right),\nonumber\\
&=:& \frac{1}{2}{\rm Re\,}\left(\log(\zeta(1+iu))-A(u)\right),\nonumber
\end{eqnarray}
as the last written double sum converges absolutely (uniformly in $\varepsilon$).
It remains to note that $\log(\zeta(1+iu))$ is real analytic on $(0,\infty)$, and the function $A$ is $C^\infty$-smooth on the same set as term wise differentiation of $A$ $\ell$ times with respect to $u$ produces a series with the majorant series
$$
\sum_p\sum_{k=2}^\infty k^{\ell-1}p^{-k}\log^\ell p\leq
\sum_pp^{-3/2}\Big(\sum_{r=0}^\infty (r+2)^{\ell-1}p^{-r}\Big)<\infty .
$$
\end{proof}

Before proving the convergence of the subcritical chaos we still need to note that the expectation of the exponential martingale obtained via the Gaussian approximation converges (apart from a multiplicative constant) with the same rate as that of our original exponential martingale. 
\begin{lemma}\label{le:exp} For any $\beta >0$ there is a constant $C=C(\beta)$ such that
$$
C^{-1}\E \exp(\beta G_N)\leq \E \exp(\beta X_N) \leq C\E \exp(\beta G_N)\quad \textrm{for all }\quad N\geq 1.
$$
\end{lemma}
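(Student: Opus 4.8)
The plan is to compute the two Laplace transforms directly, rather than routing the decomposition $X_N=G_N+E_N$ of Theorem~\ref{th:g-approx} through H\"older's inequality together with the exponential moments of $E_N$. That second route is hopelessly lossy here: bounding $\E\exp(\beta X_N)\le(\E\exp(2\beta G_N))^{1/2}(\E\exp(2\beta E_N))^{1/2}$ would introduce a spurious factor $\exp(c\,\psi_N(0))$, which diverges as $N\to\infty$, whereas the two transforms in fact differ by only a bounded multiplicative constant.

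First I would reduce to a single evaluation point, by noting that both $\E\exp(\beta X_N(x))$ and $\E\exp(\beta G_N(x))$ are independent of $x$ (so that the quantities in the statement are well defined). For $G_N$ this is immediate, since $G_N(x)$ is centred Gaussian with variance $\sum_{j=1}^N\tfrac{1}{2p_j}\bigl(\cos^2(x\log p_j)+\sin^2(x\log p_j)\bigr)=\psi_N(0)$, whence $\E\exp(\beta G_N(x))=\exp\bigl(\tfrac{\beta^2}{2}\psi_N(0)\bigr)$. For $X_N$, the $j$-th summand equals $p_j^{-1/2}\cos(x\log p_j-\theta_{p_j})$, which for every $x$ has the same law as $p_j^{-1/2}\cos\theta_{p_j}$ because $\theta_{p_j}$ is uniform on $[0,2\pi]$; by independence $X_N(x)\eqlaw X_N(0)$, so $\E\exp(\beta X_N(x))=\E\exp(\beta X_N(0))$. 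It therefore suffices to compare the two quantities at $x=0$.

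Next I would evaluate them at the origin. Using the elementary identity $\E\exp(t\cos\theta)=\sum_{k\ge0}(t/2)^{2k}/(k!)^2=:I_0(t)$ for $\theta$ uniform on $[0,2\pi]$, independence of the $\theta_{p_j}$ gives $\E\exp(\beta X_N(0))=\prod_{j=1}^N I_0\bigl(\beta p_j^{-1/2}\bigr)$, while $\E\exp(\beta G_N(0))=\exp\bigl(\tfrac{\beta^2}{2}\psi_N(0)\bigr)=\exp\bigl(\tfrac{\beta^2}{4}\sum_{j=1}^N p_j^{-1}\bigr)$ since $\psi_N(0)=\tfrac12\sum_{j\le N}p_j^{-1}$. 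The two are then matched through the pointwise bounds
\[
e^{\frac{t^2}{4}-\frac{t^4}{32}}\;\le\;1+\tfrac{t^2}{4}\;\le\;I_0(t)\;\le\;e^{t^2/4},\qquad t\ge0,
\]
in which the middle inequalities are trivial (truncation of the power series, respectively $(k!)^2\ge k!$) and the first is the standard estimate $1+x\ge e^{x-x^2/2}$ with $x=t^2/4$. Taking $t=\beta p_j^{-1/2}$ and multiplying over $j\le N$ yields
\[
\exp\Bigl(-\tfrac{\beta^4}{32}\sum_{j\ge1}p_j^{-2}\Bigr)\,\E\exp(\beta G_N(0))\;\le\;\E\exp(\beta X_N(0))\;\le\;\E\exp(\beta G_N(0)),
\]
and $\sum_j p_j^{-2}$ converges by comparison with $\sum n^{-2}$; this is exactly the asserted two-sided bound, with $C(\beta)=\exp\bigl(\tfrac{\beta^4}{32}\sum_j p_j^{-2}\bigr)$.

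I do not anticipate a genuine obstacle. The only thing that really needs to be noticed is that the Gaussian coupling should \emph{not} be invoked for this lemma, and that both Laplace transforms are explicit and can be compared exactly; the sole analytic input is the elementary small-$t$ behaviour of $I_0$ (equivalently of $\E\exp(t\cos\theta)$), which rests on nothing more than $\sum_j p_j^{-2}<\infty$.
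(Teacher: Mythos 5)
Your proposal is correct and follows essentially the same route as the paper: both arguments factor $\E\exp(\beta X_N)$ over the primes by independence and compare each one-dimensional Laplace transform to its Gaussian counterpart $e^{\beta^2/(4p_j)}$ up to a multiplicative error that is summable in $j$. The only difference is cosmetic — the paper obtains $\exp(\tfrac14\lambda^2+O(\lambda^3))$ softly from analyticity and symmetry (valid only for $\lambda\le a_0$, hence the "large enough $k$" caveat), whereas your explicit bounds $e^{t^2/4-t^4/32}\le I_0(t)\le e^{t^2/4}$ hold for all $t$ and yield an explicit constant $C(\beta)$.
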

\begin{proof}
Note first that there is an $a_0>0$ such that for arbitrary $y \in\R$ the asymptotics of the Laplace transform satisfy:
$$
\E \exp \big(\lambda (\cos(\theta_p)\cos (y)+\sin(\theta_p)\sin (y))\big)=\exp(\frac{1}{4}\lambda^2+O(\lambda^3)) \quad\textrm{for}\quad |\lambda|\leq a_0.
$$
This can be seen by noting that the Laplace transform is analytic, symmetric and has second derivative equal to 1 at zero since 
$\E\big(\cos(\theta_p)\cos (y)+\sin(\theta_p)\sin (y)\big)^2 =1/2$ for all $y$. Since $(p_k)^{-1/2}\to 0$ as $k\to\infty$, and $\sum_{k=1}^\infty ((p_k)^{-1/2})^3<\infty,$ we may apply independence and the above asymptotics for large enough $k$ (depending on $\beta$)
to deduce that 
$$
\E \exp(\beta X_N)\approx \exp\big(\frac{\beta^2}{4}\sum_{k=1}^N\frac{1}{p_k}\big)=\E \exp(\beta G_N).
$$
\end{proof}

\begin{remark}
Note that in our case the asymptotic covariance has a singularity of the form $-\frac{1}{2}\log |x-y|$ instead of $-\log |x-y|$ as in Theorem \ref{th:gmc}. This simply means that we replace $\beta$ by $\beta/\sqrt{2}$ in Theorem \ref{th:gmc}.
\end{remark}

One should note that combining the above lemmas  we see that
\begin{equation}\label{eq:expmoments}
\E \exp(\beta X_N) \approx \E \exp(\beta G_N)\approx \exp(\frac{\beta^2}{4}\log\log N)=(\log N)^{\beta^2/4}\quad \textrm{for}\quad N\geq 1.
\end{equation}

Finally we are ready for:

\begin{proof}[Proof of Theorem \ref{th:main}] Consider the Gaussian field $G$ that is the limit of the fields $G_N$. For $\beta<2$ the corresponding log-normal chaos exists due to Theorem \ref{th:gmc}, and the approximating measures obtained from the fields $G_N$ converge  to $\nu_\beta$. Especially, there is a $\widetilde p>1$ such that $\nu_\beta$ satisfies
$\E (\nu_{\beta, N}(0,1)^{\widetilde p})\leq C<\infty$ for all $N\geq 1$. Recall that we want to prove that for each continuous $f:[0,1]\to \R_+$, $\mu_{N,\beta}(f)$ converges almost surely to a non-trivial random variable. By the construction of the measure, this is a positive martingale, so it is enough to prove that it is bounded in $L^p$ for some $p>1$. For this it is then enough to show that $\mu_{N,\beta}(0,1)$ is bounded in $L^p$ for some $p>1$. Choose $p\in (1,\widetilde p)$ and consider the approximating measures $\mu_{\beta,N}$ corresponding to the fields $X_N(x)$. Since the normalisation factors are comparable, we obtain by H\"older's inequality
and Theorem \ref{th:g-approx}
\begin{align*}
\E\mu_{\beta,N}(0,1)^p&\leq \E\Big( \exp(p\beta \| E_N\|_{L^\infty(0,1)})(\nu_{\beta,N}(0,1))^p\Big) \\
&\leq \left(\E \exp\Big(p(\widetilde p/p)'\beta \| E_N\|_{L^\infty(0,1)}\Big)\right)^{1/(\widetilde p/p)'}
\big(\E(\nu_{\beta,N}(0,1))^{\widetilde p}\big)^{p/\widetilde p}\leq C',
\end{align*}
where $'$ denotes the H\"older conjugation. This yields uniform integrability of $\mu_{N,\beta}(0,1)$ which proves the existence of a non-trivial limit. The second claim is then a direct consequence of Theorem \ref{th:g-approx}. 

For $\beta\geq \beta_c$, we see similarly using Theorem \ref{th:gmc} and Theorem \ref{th:g-approx} that $\mu_{\beta,N}$ converges to zero since $\nu_{\beta,N}$ converges to zero.
\end{proof}

We can also immediately prove Proposition \ref{prop:multif}.

\begin{proof}[Proof of Proposition \ref{prop:multif}]
As in our proof that the martingale $(\mu_{\beta,N})_N$ is uniformly integrable, we note that for $0<q<\widetilde p$,  and for any $x\in (0,1)$ and $r>0$

\begin{equation}
\lim_{N\to\infty}\E(\mu_{\beta,N}(B(x,r))^{q})=\E(\mu_{\beta}(B(x,r))^{q}).
\end{equation}

Let us first note that 

\begin{equation}
e^{-\beta||E_{N}||_{L^{\infty}(0,1)}}\leq \frac{\mu_{\beta,N}(B(x,r))}{\int_{x-r}^{x+r} \frac{e^{\beta G_N(y)}}{\E e^{\beta X_N(y)}}dy}\leq e^{\beta||E_{N}||_{L^{\infty}(0,1)}}.
\end{equation}

Then take $\epsilon>0$ so small that $(1+\epsilon)q<q_c$. Arguing as in the proof of Theorem \ref{th:main} with H\"older's inequality we have for some constant $C>0$

\begin{equation}
\E (\mu_{\beta}(B(x,r))^{q})\leq C\left(\E \nu_{\beta}(B(x,r))^{q(1+\epsilon)}\right)^{\frac{1}{1+\epsilon}}.
\end{equation}

As the covariance of the limiting Gaussian field is of the form $-\frac{1}{2}\log |x-y|+g(x-y)$, we know how the expectation here scales in $r$ (see e.g. \cite[Theorem 2.14]{RV}):

\begin{equation}
\E \nu_{\beta}(B(x,r))^{q(1+\epsilon)}\sim r^{(1+\beta^{2}/2)q(1+\epsilon)-(q(1+\epsilon))^{2}\beta^{2}/2}.
\end{equation}

Taking logarithms, dividing by $\log r$,  letting $r\to 0$ and then $\epsilon\to 0$ we get the correct upper bound. 

\vspace{0.3cm}

For the lower bound, we use the reverse H\"older inequality: let $p>1$, $f$ and $g$ be measurable such that  $g\neq 0$ almost surely. Then 

\begin{equation}
\E|fg|\geq \left(\E|f|^{1/p}\right)^{p}\left(\E|g|^{-1/(p-1)}\right)^{-(p-1)}.
\end{equation}

With a similar argument simply replacing H\"older's inequality by the reverse H\"older inequality we find for some $C=C(\beta,q,\epsilon)>0$

\begin{equation}
\E(\mu_\beta(B(x,r))^{q})\geq C\left(\E \nu_{\beta}(B(x,r))^{q/(1+\epsilon)}\right)^{1+\epsilon}.
\end{equation}

Performing the same steps as above we get the lower bound as well.

\end{proof}

\section{The critical measure}

In this section we establish the existence of the critical measure. We'll do this by showing that $G_N(x)=\widetilde{G}_N(x)+D_N(x)$, where $D_N$ converges almost surely to a nice continuous Gaussian field and $\widetilde{G}_N$ is sequence of  Gaussian fields for which the critical measure can be shown to exist (using results from \cite{JS}). More precisely, the result we'll need is:

\begin{theorem}[{\cite[Theorem 1.1]{JS}}]\label{th:js1}
Let $(X_N)$ and $(\widetilde{X}_N)$ be two sequences of H\"older regular Gaussian fields on $[0,1]$ $($that is, $(x,y)\to \sqrt{\E(X_N(x)-X_N(y))^2}$ is H\"older continuous on $[0,1]^2$ $)$. Assume that $A_Ne^{\widetilde{X}_N(x)-\frac{1}{2}\E \widetilde{X}_N(x)^2}dx$ converges weakly in distribution to an almost surely non-ato-mic measure $\widetilde{\mu}$, where $A_N$ is a deterministic scalar sequence. Assume further that the covariances $C_N(x,y)=\E X_N(x)X_N(y)$ and $\widetilde{C}_N(x,y)=\E \widetilde{X}_N(x) \widetilde{X}_N(y)$ satisfy the following conditions: there exists a constant $K\in(0,\infty)$ (independent of $N$) such that for all $N\geq 1$, 

\begin{equation}
\sup_{x,y\in[0,1]}|C_N(x,y)-\widetilde{C}_N(x,y)|\leq K
\end{equation}

\noindent and for each $\delta>0$

\begin{equation}
\lim_{N\to\infty}\sup_{|x-y|>\delta}|C_N(x,y)-\widetilde{C}_N(x,y)|=0.
\end{equation}

Then also $A_ne^{X_N(x)-\frac{1}{2}\E X_N(x)^2}dx$ converges weakly to $\widetilde{\mu}$.
\end{theorem}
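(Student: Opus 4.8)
This is \cite[Theorem 1.1]{JS}, so in the present paper it is simply invoked; here is the route I would take to prove it. The plan is to pass to Laplace functionals and then run a Gaussian interpolation between the two covariance structures. Since $[0,1]$ is compact, it suffices to prove $\E e^{-\mu_N(f)}\to\E e^{-\widetilde\mu(f)}$ for every nonnegative $f\in C([0,1])$, where $\mu_N(dx)=A_Ne^{X_N(x)-\frac12\E X_N(x)^2}\,dx$ and $\widetilde\mu_N(dx)=A_Ne^{\widetilde X_N(x)-\frac12\E\widetilde X_N(x)^2}\,dx$; by hypothesis $\E e^{-\widetilde\mu_N(f)}\to\E e^{-\widetilde\mu(f)}$. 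Fix such an $f$ and put $F(s)=e^{-s}$, which is smooth with $|F^{(k)}|\le1$. For $t\in[0,1]$ set $C_N^{(t)}=(1-t)\widetilde C_N+tC_N$; as a convex combination of continuous positive-definite kernels this is again one, and it is H\"older regular. Realize the corresponding Gaussian fields jointly, e.g.\ $X_N^{(t)}=\sqrt{1-t}\,\widetilde X_N+\sqrt{t}\,X_N$ with $\widetilde X_N\perp X_N$, write $\mu_N^{(t)}$ for the associated normalized measure (so $\mu_N^{(0)}=\widetilde\mu_N$, $\mu_N^{(1)}=\mu_N$), and note $\partial_tC_N^{(t)}=D_N:=C_N-\widetilde C_N$, where $\sup_{x,y}|D_N|\le K$ and $\sup_{|x-y|>\delta}|D_N|\to0$ for every $\delta>0$.

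For fixed $N$ the map $t\mapsto\E F(\mu_N^{(t)}(f))$ is $C^1$, and Gaussian integration by parts (Kahane's interpolation identity for multiplicative chaos) yields
\[
\frac{d}{dt}\,\E F\big(\mu_N^{(t)}(f)\big)=\frac12\int_0^1\!\!\int_0^1 f(x)f(y)\,D_N(x,y)\,\E\big[F''\big(\mu_N^{(t),x,y}(f)\big)\big]\,A_N^2e^{C_N^{(t)}(x,y)}\,dx\,dy,
\]
where $\mu_N^{(t),x,y}$ is $\mu_N^{(t)}$ with its density multiplied by $e^{C_N^{(t)}(\cdot,x)+C_N^{(t)}(\cdot,y)}$, and $A_N^2e^{C_N^{(t)}(x,y)}\,dx\,dy=\E[\mu_N^{(t)}(dx)\,\mu_N^{(t)}(dy)]$. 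Since $|F''|\le1$, this derivative is bounded in absolute value by $\|f\|_\infty^2\iint_{\mathrm{supp}\,f}|D_N(x,y)|\,A_N^2e^{C_N^{(t)}(x,y)}\,dx\,dy$.

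I would split this integral over $\{|x-y|>\delta\}$ and $\{|x-y|\le\delta\}$. On the first region $C_N^{(t)}(x,y)\le\log(1/\delta)+O(1)$, so $A_N^2e^{C_N^{(t)}(x,y)}$ is controlled, and $\sup_{|x-y|>\delta}|D_N|\to0$ makes that piece $o(1)$ for each fixed $\delta$.

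The near-diagonal region carries the whole difficulty: $\iint_{|x-y|\le\delta}A_N^2e^{C_N^{(t)}(x,y)}\,dx\,dy$ is the two-point mass, and at criticality this \emph{diverges} as $N\to\infty$ (for the $\zeta$-model it is of order $\log\log N\cdot\log N$), so the naive bound is useless; this is the main obstacle, and precisely why a dedicated result such as \cite{JS} is needed. The remedy — the technical heart of \cite{JS} — is to replace $\mu_N^{(t)}$ by a truncated measure $\mu_N^{(t),L}$ obtained by intersecting with a barrier event that forbids $X_N^{(t)}$ from being anomalously high at intermediate scales near each point (exactly the thick points responsible for the blow-up). On $\mu_N^{(t),L}$ the near-diagonal two-point mass is finite uniformly in $N$, so the computation above closes and gives $\E F(\mu_N^{(1),L}(f))-\E F(\mu_N^{(0),L}(f))\to0$ for each fixed $L$. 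One then lets $L\to\infty$, using the assumed convergence $\widetilde\mu_N\to\widetilde\mu$ together with the a.s.\ non-atomicity of $\widetilde\mu$ to show $\lim_{L\to\infty}\limsup_N\sup_{t\in[0,1]}|\E F(\mu_N^{(t),L}(f))-\E F(\mu_N^{(t)}(f))|=0$ (the excised high points carry no mass in the limit). Combining this with $\E e^{-\mu_N(f)}=\E e^{-\widetilde\mu_N(f)}+\int_0^1\frac{d}{dt}\E e^{-\mu_N^{(t)}(f)}\,dt$ gives $\E e^{-\mu_N(f)}\to\E e^{-\widetilde\mu(f)}$, hence the theorem. (In the subcritical range where the two-point mass is already integrable near the diagonal, no truncation is necessary and the split closes directly; the critical case is exactly where the extra work is required.)
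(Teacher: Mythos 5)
First, a point of order: the paper does not prove this statement at all --- it is quoted verbatim from \cite[Theorem 1.1]{JS} and simply invoked, so there is no internal proof to compare your sketch against. Judged on its own terms, your architecture (reduce to Laplace functionals, Gaussian interpolation $C_N^{(t)}=(1-t)\widetilde C_N+tC_N$, split near/off the diagonal, kill the diagonal via a barrier truncation and the non-atomicity of $\widetilde\mu$) is the right family of ideas, but the two quantitative claims on which your argument rests do not close, precisely in the critical regime for which the theorem is needed here.

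\emph{Off-diagonal term.} Your bound on $\frac{d}{dt}\E F(\mu_N^{(t)}(f))$ over $\{|x-y|>\delta\}$ is of the form $\sup_{|x-y|>\delta}|D_N|\cdot A_N^2\cdot O_\delta(1)$, and the factor $\E[F''(\mu_N^{(t),x,y}(f))]=\E[e^{-\mu_N^{(t),x,y}(f)}]$ does not decay (the shifted mass is bounded below by a constant multiple of $\mu_N^{(t)}(f)$, which converges to a nondegenerate limit). In the critical application $A_N=\sqrt{\log\log N}\to\infty$, and the hypothesis gives no rate for $\sup_{|x-y|>\delta}|D_N|\to 0$; hence $o(1)\cdot A_N^2$ is not $o(1)$, and even your ``easy'' region is not controlled. (This could in principle be repaired by applying the barrier truncation globally, since the truncated critical two-point function is bounded off the diagonal --- the two ballot factors absorb the $A_N^2$ --- but you present the off-diagonal piece as unproblematic, which it is not.) \emph{Near-diagonal term.} The assertion that on the truncated measure ``the near-diagonal two-point mass is finite uniformly in $N$'' is exactly the step that carries all the difficulty, and it is at best unproved and quite possibly false: at criticality even the barrier-truncated Seneta--Heyde normalized measure is not known to be (and is generally believed not to be) bounded in $L^2$ uniformly in $N$ --- this is why the existing uniqueness arguments for critical chaos (in \cite{DRSV1,DRSV2,JS}) avoid second-moment interpolation near the diagonal and instead proceed via decoupling over a $\delta$-partition combined with Kahane's convexity inequality (using the uniform bound $K$) and the non-atomicity of $\widetilde\mu$ to show that the diagonal blocks carry asymptotically no mass, or via spine/size-biasing arguments. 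As written, your proposal therefore has a genuine gap at its quantitative heart, even though the overall strategy points in a sensible direction.
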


To do this, we thus need a reference approximation for which convergence is known, and a representation of our field which gives us good control on the covariance. Let us first discuss the reference field.

For this, we recall a construction from \cite{BM} and make use of results in \cite{DRSV1}. 

\begin{definition}{\rm
Let $W$ denote a white noise on $\R\times [-1/2,3/2]$. For $t\in \R$ and $x\in[0,1]$, write

\begin{equation}
\widetilde{G}_t(x)=\frac{1}{\sqrt{2}}\int_{-\infty}^t \int_{-1/2}^{3/2}\mathbf{1}\left\lbrace |x-y|\leq \frac{1}{2}\min(e^{-s},1)\right\rbrace e^{s/2}W(ds,dy).
\end{equation}}
\end{definition}

The covariance of the field is

\begin{equation}
\E\left(\widetilde{G}_t(x)\widetilde{G}_t(y)\right)=\begin{cases}
\frac{1}{2}\left[1+t-e^t |x-y|\right], & |x-y|\leq e^{-t}\\
-\frac{1}{2}\log |x-y|,  & e^{-t}\leq |x-y|\leq 1
\end{cases}.
\end{equation}

\noindent Obviously the above field is H\"older-regular as it is $C^1$. As pointed out in \cite[Remark 3]{DRSV1}, the main results of \cite{DRSV1} apply also to the measure $\sqrt{t}e^{2\widetilde{G}_t(x)-2\E \widetilde{G}_t(x)^2}dx$ , whence it converges weakly in probability to a non-trivial, and non-atomic random measure, as $t\to \infty$.

Our next task is to then approximate our field by one whose covariance we can control. We'll carry this out in several steps. While perhaps the results we need might follow from general results for Gaussian processes, we will repeat a variation of our argument in Lemma \ref{le:E2} several times. Our first step is to consider a more concrete sum - we replace the summation over primes by a more regular one in terms of the Logarithmic integral: define 

\begin{equation}
G_{N,1}(x)=\sum_{j=1}^N\frac{1}{\sqrt{2 \mathrm{Li}^{-1}(j)}}\big[W_j^{(1)}\cos(x\log \mathrm{Li}^{-1}(j))+W_j^{(2)}\sin(x\log \mathrm{Li}^{-1}(j))\big].
\end{equation}

Let us show that this is a good approximation to $G_N$.

\begin{lemma}
There exists a random continuous function $F_1:[0,1]\to\R$ such that almost surely, $G_{N,1}-G_N$ converges to $F_1$ uniformly.
\end{lemma}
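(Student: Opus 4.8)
The plan is to follow essentially verbatim the strategy already used in Lemma~\ref{le:E2}, since $D_N:=G_{N,1}-G_N$ is a sum of independent, centred, $C^1([0,1])$-valued Gaussian terms built from the very same variables $W_j^{(i)}$:
$$
D_N(x)=\sum_{j=1}^N D_j(x),\qquad D_j(x)=\tfrac{1}{\sqrt2}W_j^{(1)}\varphi_j^c(x)+\tfrac{1}{\sqrt2}W_j^{(2)}\varphi_j^s(x),
$$
where $\varphi_j^c(x)=\mathrm{Li}^{-1}(j)^{-1/2}\cos(x\log\mathrm{Li}^{-1}(j))-p_j^{-1/2}\cos(x\log p_j)$ and $\varphi_j^s$ is the analogous sine expression. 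Thus it suffices to show $\sum_{j\geq 1}\E\|D_j\|_{L^\infty(0,1)}^2<\infty$; granted this, independence together with the Sobolev estimate \eqref{eq:sobolev}, L\'evy's inequality and the Borel--Cantelli argument of Lemma~\ref{le:E2} produce an a.s. uniform limit $F_1$ which is continuous (one even gets exponential moments of $\sup_N\|D_N\|_{L^\infty(0,1)}$, though we do not need them).

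First I would collect the quantitative number-theoretic input. By Lemma~\ref{le:li-inverse} one has $|p_j-\mathrm{Li}^{-1}(j)|\lesssim j\,e^{-\sqrt{\log j}}$, while \eqref{eq:pr_as} gives $p_j,\ \mathrm{Li}^{-1}(j)\asymp j\log j$. Consequently
$$
\big|\log p_j-\log\mathrm{Li}^{-1}(j)\big|\lesssim \frac{e^{-\sqrt{\log j}}}{\log j},\qquad
\big|p_j^{-1/2}-\mathrm{Li}^{-1}(j)^{-1/2}\big|\lesssim \frac{e^{-\sqrt{\log j}}}{j^{1/2}(\log j)^{3/2}}.
$$
Writing $\varphi_j^c$ as a "change of amplitude" term $(\mathrm{Li}^{-1}(j)^{-1/2}-p_j^{-1/2})\cos(x\log\mathrm{Li}^{-1}(j))$ plus a "change of phase" term $p_j^{-1/2}(\cos(x\log\mathrm{Li}^{-1}(j))-\cos(x\log p_j))$, and bounding the latter by $|x|\,|\log p_j-\log\mathrm{Li}^{-1}(j)|$, I get $\|\varphi_j^c\|_{L^\infty(0,1)}\lesssim e^{-\sqrt{\log j}}\,j^{-1/2}(\log j)^{-3/2}$. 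Differentiating once introduces the factors $\log p_j$, $\log\mathrm{Li}^{-1}(j)$; these are handled exactly as the $|\log p\sin(x\log p)-\log q\sin(x\log q)|$ estimate in Lemma~\ref{le:E2} (using $|g(a)-g(b)|\lesssim \max(a,b)^{-3/2}\log\max(a,b)\,|a-b|$ for $g(t)=t^{-1/2}\log t$), and give $\|(\varphi_j^c)'\|_{L^\infty(0,1)}\lesssim e^{-\sqrt{\log j}}\,j^{-1/2}(\log j)^{-1/2}$; the same bounds hold with $\varphi_j^s$ in place of $\varphi_j^c$.

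Combining these with \eqref{eq:sobolev} and $\E(W_j^{(i)})^2=1$, and noting the derivative term dominates, I obtain
$$
\E\|D_j\|_{L^\infty(0,1)}^2\ \lesssim\ \|\varphi_j^c\|_{W^{1,2}(0,1)}^2+\|\varphi_j^s\|_{W^{1,2}(0,1)}^2\ \lesssim\ \frac{e^{-2\sqrt{\log j}}}{j\log j}.
$$
Finally, via the substitution $u=\log j$ the tail $\sum_{j}e^{-2\sqrt{\log j}}/(j\log j)$ is comparable to $\int^{\infty}e^{-2\sqrt u}\,u^{-1}\,du<\infty$, because $e^{-2\sqrt u}$ decays faster than any power of $u$. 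Hence $\sum_j\E\|D_j\|_{L^\infty(0,1)}^2<\infty$, which is what we needed. (The finitely many small indices $j$ for which $\mathrm{Li}^{-1}(j)$ is not comfortably large only add a bounded continuous function and are harmless.)

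The one genuinely delicate point is this last convergence check: the crude bound would leave the borderline-divergent series $\sum_j 1/(j\log j)$, so one really must exploit the error term $e^{-\sqrt{\log j}}$ supplied by the strong (de la Vall\'ee Poussin type) form of the prime number theorem encoded in Lemma~\ref{le:li-inverse}; the weaker asymptotics $p_j=j\log j+O(j\log\log j)$ of \eqref{eq:pr_as} alone would not be enough here. Everything else is a routine repetition of computations already performed for Lemma~\ref{le:E2}.
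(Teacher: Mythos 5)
Your proof is correct and follows essentially the same route as the paper: write the difference as a sum of independent centred terms, bound the sup-norm of each partial-sum increment via the $W^{1,2}$ Sobolev estimate \eqref{eq:sobolev}, use Lemma \ref{le:li-inverse} to get the term-by-term bound $e^{-\sqrt{\log j}}j^{-1/2}(\log j)^{-3/2}$ (with an extra $\log j$ after differentiation), and conclude via L\'evy's inequality and Borel--Cantelli as in Lemma \ref{le:E2}. Your resulting series $\sum_j e^{-2\sqrt{\log j}}/(j\log j)$ and the observation that the de la Vall\'ee Poussin error term is what saves it from the borderline $\sum 1/(j\log j)$ match the paper's (terser) argument exactly.
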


\begin{proof}
Let us write $F_{N,1}=G_{N,1}-G_N$. Our argument is very similar to the proof of Lemma \ref{le:E2}. Due to this, we won't go through all of the details. Again, it will be enough to estimate $\E||F_{N,1}-F_{M,1}||_{L^\infty(0,1)}^2$ and we'll do this by making use of the fact that we can bound the $\sup$-norm by the Sobolev norm in our case. For the Sobolev norm, we note that it follows from Lemma \ref{le:li-inverse} that 

\begin{align}
\left|\frac{1}{\sqrt{2 p_j}}\cos(x\log p_j)-\frac{1}{\sqrt{2\mathrm{Li}^{-1}(j)}}\cos(x\log \mathrm{Li}^{-1}(j))\right|&\lesssim \frac{|p_j-\mathrm{Li}^{-1}(j)|}{p_j^{3/2}}\\
\notag &\lesssim \frac{j e^{-\sqrt{\log j}}}{(j\log j)^{3/2}}.
\end{align}

A similar estimate holds for the sine-term. Differentiating only gives an extra $\log j$ here. So we see that if $M< N$, then using the Sobolev bound one finds

\begin{equation}
\E||F_{N,1}-F_{M,1}||_{L^\infty(0,1)}^2\lesssim \sum_{j=M+1}^{N}\frac{e^{-2\sqrt{\log j}}}{j\log j},
\end{equation}

\noindent which is bounded in $N$ and $M$. We can then proceed as in Lemma \ref{le:E2},  using again L\'evy's inequality. 
\end{proof}

Next we find it useful to move to the continuous Fourier side and perform further smoothing there. We'll do this by first replacing the Gaussian blocks by Wiener integrals. More precisely, consider $B_t^{(1)}$ and $B_t^{(2)}$ two independent Brownian motions, and let us assume that the Gaussian variables $W^{(i)}_j$ are constructed from $B_t^{(i)}$ in the following manner:

\begin{equation}
W_j^{(i)}=\int_{\mathrm{Li}^{-1}(j)}^{\mathrm{Li}^{-1}(j+1)}\frac{dB_t^{(i)}}{\sqrt{\mathrm{Li}^{-1}(j+1)-\mathrm{Li}^{-1}(j)}}.
\end{equation}

First of all, we claim the following:

\begin{lemma}
Let 

\begin{align}
G_{N,2}(x)&=\sum_{j=1}^N \int_{\mathrm{Li}^{-1}(j)}^{\mathrm{Li}^{-1}(j+1)}\frac{\cos(x\log t)}{\sqrt{2t}}\frac{dB_t^{(1)}}{\sqrt{\mathrm{Li}^{-1}(j+1)-\mathrm{Li}^{-1}(j)}}\\
\notag &\quad +\sum_{j=1}^N \int_{\mathrm{Li}^{-1}(j)}^{\mathrm{Li}^{-1}(j+1)}\frac{\sin(x\log t)}{\sqrt{2t}}\frac{dB_t^{(2)}}{\sqrt{\mathrm{Li}^{-1}(j+1)-\mathrm{Li}^{-1}(j)}}.
\end{align}

Then almost surely, $G_{N,2}-G_{N,1}$ converges uniformly to a continuous function $F_{2}$.
\end{lemma}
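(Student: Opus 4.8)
The plan is to repeat, essentially verbatim, the freezing argument of Lemma~\ref{le:E2} (as was already done in the preceding lemma for $G_{N,1}-G_N$), the only change being that the ``blocks'' are now stochastic integrals over the disjoint time intervals $[\mathrm{Li}^{-1}(j),\mathrm{Li}^{-1}(j+1)]$. Write $\phi_x(t)=\cos(x\log t)/\sqrt{2t}$, $\psi_x(t)=\sin(x\log t)/\sqrt{2t}$, $w_j=\mathrm{Li}^{-1}(j+1)-\mathrm{Li}^{-1}(j)$, and $F_{N,2}=G_{N,2}-G_{N,1}$. Since $G_{N,1}$ is built from the very same Brownian motions, the difference of the $j$-th blocks of $G_{N,2}$ and $G_{N,1}$ is
\begin{equation*}
D_j(x):=\int_{\mathrm{Li}^{-1}(j)}^{\mathrm{Li}^{-1}(j+1)}\frac{\phi_x(t)-\phi_x(\mathrm{Li}^{-1}(j))}{\sqrt{w_j}}\,dB_t^{(1)}+\int_{\mathrm{Li}^{-1}(j)}^{\mathrm{Li}^{-1}(j+1)}\frac{\psi_x(t)-\psi_x(\mathrm{Li}^{-1}(j))}{\sqrt{w_j}}\,dB_t^{(2)},
\end{equation*}
so that $F_{N,2}-F_{M,2}=\sum_{j=M+1}^N D_j$ with the $D_j$ independent (disjoint time intervals), centred, and hence symmetric. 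As in Lemma~\ref{le:E2} I would bound the sup-norm by the $W^{1,2}(0,1)$-norm via \eqref{eq:sobolev}, and then use independence and vanishing of the means to get
\begin{equation*}
\E\|F_{N,2}-F_{M,2}\|_{L^\infty(0,1)}^2\;\le\;4\int_0^1\sum_{j=M+1}^N\big(\E D_j(x)^2+\E D_j'(x)^2\big)\,dx,
\end{equation*}
the term-by-term $x$-differentiation being legitimate for the same reason as in Lemma~\ref{le:E2}, namely that the differentiated series will be shown to converge uniformly as well.

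For the per-block bound the It\^o isometry gives $\E D_j(x)^2=w_j^{-1}\int_{\mathrm{Li}^{-1}(j)}^{\mathrm{Li}^{-1}(j+1)}\big[(\phi_x(t)-\phi_x(\mathrm{Li}^{-1}(j)))^2+(\psi_x(t)-\psi_x(\mathrm{Li}^{-1}(j)))^2\big]\,dt\le w_j^2\sup_{[\mathrm{Li}^{-1}(j),\mathrm{Li}^{-1}(j+1)]}(|\phi_x'|^2+|\psi_x'|^2)$, and similarly for $\E D_j'(x)^2$ with $\phi_x,\psi_x$ replaced by $\partial_x\phi_x,\partial_x\psi_x$. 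An elementary computation, using $x\in[0,1]$, gives $|\phi_x'(s)|,|\psi_x'(s)|\lesssim s^{-3/2}$ and $|\partial_s\partial_x\phi_x(s)|,|\partial_s\partial_x\psi_x(s)|\lesssim (\log s)\,s^{-3/2}$ for large $s$. Lemma~\ref{le:li-inverse} together with $(\mathrm{Li}^{-1})'(x)\le 2\log x$ (established in its proof) gives $w_j=(\mathrm{Li}^{-1})'(\xi_j)\lesssim\log j$ for some $\xi_j\in(j,j+1)$, while \eqref{eq:pr_as} and Lemma~\ref{le:li-inverse} give $\mathrm{Li}^{-1}(j)\gtrsim j\log j$. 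Combining these yields, uniformly in $x\in[0,1]$,
\begin{equation*}
\E D_j(x)^2\lesssim \frac{(\log j)^2}{(j\log j)^3}=\frac{1}{j^3\log j},\qquad \E D_j'(x)^2\lesssim \frac{(\log j)^4}{(j\log j)^3}=\frac{\log j}{j^3}.
\end{equation*}

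Consequently $\E\|F_{N,2}-F_{M,2}\|_{L^\infty(0,1)}^2\lesssim\sum_{j=M+1}^N (\log j)\,j^{-3}\to 0$ as $M\to\infty$; in particular the whole series is summable. From here one finishes exactly as in Lemma~\ref{le:E2}: L\'evy's inequality (\cite[Lemma~1., p.~14]{Kahane2}, applied to the symmetric $C(0,1)$-valued random variables $D_j$) upgrades this to $\E\big(\sup_{M\le r\le N}\|F_{r,2}-F_{M,2}\|_{L^\infty(0,1)}^2\big)\lesssim\sum_{j=M+1}^N(\log j)\,j^{-3}$; choosing a subsequence $(n_\ell)$ along which the tail of this series is $o(\ell^{-6})$ and invoking Borel--Cantelli gives $\sup_{n_\ell\le u\le n_{\ell+1}}\|F_{u,2}-F_{n_\ell,2}\|_{L^\infty(0,1)}\le\ell^{-2}$ eventually, hence $(F_{N,2})$ is almost surely uniformly Cauchy and converges uniformly to a limit $F_2$, continuous as a uniform limit of the continuous functions $F_{N,2}$.

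I do not expect a genuine obstacle here: the argument is a mechanical adaptation of Lemma~\ref{le:E2}, and the only points requiring care are (i) recording the freezing-error bounds for $\phi_x,\psi_x$ and their $x$-derivatives with the correct powers of $\log$, and (ii) extracting the block-width estimate $w_j\lesssim\log j$ and the lower bound $\mathrm{Li}^{-1}(j)\gtrsim j\log j$ from Lemma~\ref{le:li-inverse} and \eqref{eq:pr_as}, so that the resulting series $\sum_j (\log j)j^{-3}$ is manifestly convergent. One also needs that $G_{N,2}$ is, like $G_{N,1}$, almost surely continuous in $x$; this is immediate since each of its finitely many summands is a Wiener integral of a deterministic integrand that is smooth in $x$ with $t$-uniformly bounded $x$-derivatives on the relevant compact interval, so it even has a $C^1$ version in $x$.
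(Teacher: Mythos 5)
Your proposal is correct and follows essentially the same route as the paper: It\^o's isometry for the per-block freezing error, the Sobolev bound \eqref{eq:sobolev} on the sup-norm, summability of $\sum_j(\log j)j^{-3}$ via Lemma \ref{le:li-inverse}, and then L\'evy's inequality plus Borel--Cantelli exactly as in Lemma \ref{le:E2}. Your block estimate is in fact slightly sharper than the $\mathcal{O}(j^{-3/2}\log^{-1/2}j)$ bound the paper records, but both are summable, so nothing changes.
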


\begin{proof}
This is very similar to the previous lemma, and again we'll follow the proof of Lemma \ref{le:E2}. By Ito's isometry, to get a hold of the expectation of the square of the Sobolev norm, we now only need to estimate 

\begin{equation}
\frac{1}{\mathrm{Li}^{-1}(j+1)-\mathrm{Li}^{-1}(j)}\int_{\mathrm{Li}^{-1}(j)}^{\mathrm{Li}^{-1}(j+1)}\left[\frac{\cos(x\log \mathrm{Li}^{-1}(j))}{\sqrt{2\mathrm{Li}^{-1}(j)}}-\frac{\cos(x\log t)}{\sqrt{2t}}\right]^2dt,
\end{equation}

\noindent and a similar derivative term. The integral above is  ${\mathcal O}\big(\frac{\mathrm{Li}^{-1}(j+1)-\mathrm{Li}^{-1} (j) }{ \mathrm{Li}^{-1}(j)^{3/2}}\big)={\mathcal O}\big(j^{-3/2}\log^{-1/2}(j)\big)$, while the derivative term comes with an extra $\log^2 j$. Both of these are summable over $j$, so we can conclude as before.
\end{proof}

To proceed, we'll want to replace the $1/\sqrt{\mathrm{Li}^{-1}(j+1)-\mathrm{Li}^{-1}(j)}$ by something more convenient. More precisely, we'll make use of the following approximation.

\begin{lemma}
Let 

\begin{equation}
G_{N,3}(x)=\int_{\mathrm{Li}^{-1}(1)}^{\mathrm{Li}^{-1}(N+1)}\frac{\cos(x\log t)}{\sqrt{2t}}\frac{dB_t^{(1)}}{\sqrt{\log t}}+\int_{\mathrm{Li}^{-1}(1)}^{\mathrm{Li}^{-1}(N+1)}\frac{\sin(x\log t)}{\sqrt{2t}}\frac{dB_t^{(2)}}{\sqrt{\log t}}.
\end{equation}

Then almost surely, as $N\to\infty$, $G_{N,3}-G_{N,2}$ converges uniformly to a random continuous function $F_{3}$.
\end{lemma}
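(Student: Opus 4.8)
The plan is to follow exactly the template established in Lemma~\ref{le:E2} and already reused twice: bound the $L^\infty$-norm by the $W^{1,2}$-norm via \eqref{eq:sobolev}, use It\^o's isometry to compute the expected squared Sobolev norm of the increment $F_{N,3}-F_{M,3}$, show this is summable, and then upgrade pathwise uniform convergence and exponential integrability through L\'evy's inequality and a Borel--Cantelli argument. Write $F_{N,3}=G_{N,3}-G_{N,2}$. Both $G_{N,2}$ and $G_{N,3}$ are Wiener integrals against the same pair of Brownian motions $B^{(1)},B^{(2)}$ over the interval $[\mathrm{Li}^{-1}(1),\mathrm{Li}^{-1}(N+1)]$; the only difference is that in the $j$-th sub-interval $[\mathrm{Li}^{-1}(j),\mathrm{Li}^{-1}(j+1)]$ the integrand of $G_{N,2}$ carries the weight $1/\sqrt{\mathrm{Li}^{-1}(j+1)-\mathrm{Li}^{-1}(j)}$ while that of $G_{N,3}$ carries $1/\sqrt{\log t}$. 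Hence $F_{N,3}$ is itself a Wiener integral, and for $M<N$ the increment $F_{N,3}-F_{M,3}$ is supported on sub-intervals with index $j\in\{M+1,\dots,N\}$.

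First I would record the pointwise estimate on the weight discrepancy: by Lemma~\ref{le:li-inverse} (or more simply by $(\mathrm{Li}^{-1})'(t)=\log\mathrm{Li}^{-1}(t)$ and the mean value theorem), for $t$ in the $j$-th sub-interval one has $\mathrm{Li}^{-1}(j+1)-\mathrm{Li}^{-1}(j)=\log\mathrm{Li}^{-1}(\xi_j)$ for some $\xi_j$ in the interval, and since $\mathrm{Li}^{-1}(j)\asymp j\log j$ this is $\log j+O(\log\log j)$, with the relative error in passing to $\log t$ being $O(1/\log j)$ uniformly on the sub-interval. Consequently
\begin{equation*}
\left|\frac{1}{\sqrt{\mathrm{Li}^{-1}(j+1)-\mathrm{Li}^{-1}(j)}}-\frac{1}{\sqrt{\log t}}\right|\lesssim \frac{1}{(\log j)^{3/2}\log j}=\frac{1}{(\log j)^{5/2}}
\end{equation*}
for $t$ in the $j$-th sub-interval. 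By It\^o's isometry, the contribution of the $j$-th sub-interval to $\E\|F_{N,3}-F_{M,3}\|_{W^{1,2}(0,1)}^2$ is then at most a constant times
\begin{equation*}
\int_{\mathrm{Li}^{-1}(j)}^{\mathrm{Li}^{-1}(j+1)}\frac{1+\log^2 t}{2t}\cdot\frac{1}{(\log j)^{5}}\,dt\lesssim \frac{\log^2 j}{(\log j)^5}\cdot\frac{\mathrm{Li}^{-1}(j+1)-\mathrm{Li}^{-1}(j)}{\mathrm{Li}^{-1}(j)}\lesssim \frac{\log^2 j}{(\log j)^5}\cdot\frac{\log j}{j\log j}=\frac{1}{j(\log j)^{3}},
\end{equation*}
where the factor $1+\log^2 t$ accounts for both the function and its $x$-derivative (the derivative brings down a $\log t$ from $\cos(x\log t)$), and $1/t$ comes from the $1/\sqrt{2t}$ amplitude squared. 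Summing over $j\geq M+1$ gives $\E\|F_{N,3}-F_{M,3}\|_{L^\infty(0,1)}^2\lesssim\sum_{j\geq M+1}j^{-1}(\log j)^{-3}$, which is a convergent tail.

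With the summable bound in hand, the rest is verbatim the argument of Lemma~\ref{le:E2}: L\'evy's inequality (the fields are symmetric and $C(0,1)$-valued) promotes the $L^2$-bound on the endpoint increment to the same bound on $\sup_{M\leq r\leq N}\|F_{r,3}-F_{M,3}\|_{L^\infty}^2$; convergence of the series lets one pick a subsequence along which the tails decay like $\ell^{-6}$, and Borel--Cantelli then yields almost sure uniform convergence $F_{N,3}\to F_3$ with $F_3$ continuous (each $G_{N,3}$ is $C^1$ in $x$, and uniform convergence of the derivative series is checked the same way). I do not expect any genuine obstacle here; the only point requiring a little care is that $G_{N,2}$ and $G_{N,3}$ must be built from the \emph{same} Brownian motions so that the difference is again a single Wiener integral rather than an independent sum --- but this is already arranged by the construction $W_j^{(i)}=\int_{\mathrm{Li}^{-1}(j)}^{\mathrm{Li}^{-1}(j+1)}(\mathrm{Li}^{-1}(j+1)-\mathrm{Li}^{-1}(j))^{-1/2}\,dB_t^{(i)}$ introduced just above the statement, so no new coupling is needed. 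The mildly delicate estimate is the $O((\log j)^{-5/2})$ bound on the weight discrepancy, which relies on controlling $(\mathrm{Li}^{-1})'$ and the second-order behaviour of $\mathrm{Li}^{-1}$; everything downstream is a routine repetition of the established scheme.
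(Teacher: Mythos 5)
Your proof follows the paper's argument essentially verbatim: the paper likewise reduces to It\^o's isometry plus the Sobolev/$L^\infty$ bound and estimates $|\mathrm{Li}^{-1}(j+1)-\mathrm{Li}^{-1}(j)-\log t|$ via the substitution $t=\mathrm{Li}^{-1}(s)$, obtaining absolute error $O(1/j)$ and hence a summable $O\big((j\log j)^{-3}\big)$ contribution per block, after which L\'evy's inequality and Borel--Cantelli finish as in Lemma \ref{le:E2}. One caution: your phrase ``relative error $O(1/\log j)$'' would mean absolute error $O(1)$, which is \emph{not} sufficient (it leads to the non-summable $\sum_j 1/(j\log j)$); what your mean-value-theorem setup actually yields is absolute error $O(1/j)$ (the oscillation of $\log\mathrm{Li}^{-1}$ over a unit interval), and it is this stronger bound that validates your displayed $(\log j)^{-5/2}$ estimate and the ensuing summability.
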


\begin{proof}
Again the reasoning is an in Lemma \ref{le:E2}. Now we need to estimate terms of the form 

\begin{equation}
\int_{\mathrm{Li}^{-1}(j)}^{\mathrm{Li}^{-1}(j+1)}\left[\frac{1}{\sqrt{\mathrm{Li}^{-1}(j+1)-\mathrm{Li}^{-1}(j)}}-\frac{1}{\sqrt{\log t}}\right]^2\frac{\cos^2(x\log t)}{2t}dt,
\end{equation}

\noindent and corresponding ones with a sine or similar ones coming with a factor of $\log^2 t$ coming from the derivative term in the Sobolev estimate. To estimate such a term, we see that it is enough for us to estimate the difference $|\mathrm{Li}^{-1}(j+1)-\mathrm{Li}^{-1}(j)-\log t|$ for $t\in[\mathrm{Li}^{-1}(j),\mathrm{Li}^{-1}(j+1)]$. For this, we note that 

\begin{align}
\mathrm{Li}^{-1}(j+1)-\mathrm{Li}^{-1}(j)&=\int_{\mathrm{Li}^{-1}(j)}^{\mathrm{Li}^{-1}(j+1)}dt
=\int_{j}^{j+1}\log( \mathrm{Li}^{-1}(s))ds,
\end{align}

\noindent where we made the change of variable $t=\mathrm{Li}^{-1}(s)$, and used the fact that $\mathrm{Li}'(x)=1/\log x$. Due to the asymptotics $\mathrm{Li}^{-1}(j)\sim j\log j$  and $(\mathrm{Li}^{-1})'(j)\sim \log j$ we thus have for $t\in[\mathrm{Li}^{-1}(j),\mathrm{Li}^{-1}(j+1)]$

\begin{equation}
\left|\mathrm{Li}^{-1}(j+1)-\mathrm{Li}^{-1}(j)-\log t\right|\leq \log \frac{\mathrm{Li}^{-1}(j+1)}{\mathrm{Li}^{-1}(j)}\lesssim \frac{\mathrm{Li}^{-1}(j+1)-\mathrm{Li}^{-1}(j)}{\mathrm{Li}^{-1}(j)}\lesssim j^{-1}.
\end{equation}

Hence the square of the Sobolev norm can be bounded by ${\mathcal O}\big((j\log j)^{-3}\big)$, which is summable and the rest of the proof goes through as before.
\end{proof}

We note that  $e^{-s/2}dB_{e^s}^{(i)}= d\widetilde B^{i}_{s}$ are standard independent Brownian motions. After performing a change of variables in the integral we thus obtain 

\begin{align}
G_{N,3}(x)&=\int_{\log \mathrm{Li}^{-1}(1)}^{\log \mathrm{Li}^{-1}(N+1)}\frac{\cos(x s)}{\sqrt{s}}\frac{dB_{e^s}^{(1)}}{\sqrt{2}e^{s/2}}+\int_{\log \mathrm{Li}^{-1}(1)}^{\log \mathrm{Li}^{-1}(N+1)}\frac{\sin(xs)}{\sqrt{s}}\frac{dB_{e^s}^{(2)}}{\sqrt{2}e^{s/2}}\\
&=2^{-1/2}\int_{\log \mathrm{Li}^{-1}(1)}^{\log \mathrm{Li}^{-1}(N+1)}\frac{\cos(x s)}{\sqrt{s}}d\widetilde{B}_{s}^{(1)}+2^{-1/2}\int_{\log \mathrm{Li}^{-1}(1)}^{\log \mathrm{Li}^{-1}(N+1)}\frac{\sin(xs)}{\sqrt{s}}d\widetilde{B}_{s}^{(2)}
\end{align}

We now want to replace $1/\sqrt{s}$ by something that will allow us to reach the desired covariance in the limit. Let us consider the translation invariant covariance, already alluded to before, that is induced by the function $C(x)=\max(-\log |x|,0)$. Then

\begin{align}
\widehat{C}(k)&=\int_{-1}^1 e^{ikx}\log \frac{1}{|x|}dx\\
\notag &=2\int_0^1 \cos(kx)\log \frac{1}{x}dx\\
\notag &= \frac{2}{k}\int_0^k \cos y\log \frac{k}{y}dy\\
\notag &=\frac{2}{k}\int_0^k\frac{\sin y}{y}dy,
\end{align}

\noindent where in the last step we integrated by parts. This is positive (as it should since it's the Fourier transform of a translation invariant covariance), and as $k\to\infty$, it behaves likes $\pi/k+\mathcal{O}(k^{-2})$. Thus it should be possible to replace $1/\sqrt{s}$ in our field by $\sqrt{\widehat{C}(s)}/\sqrt{\pi}$, which will turn out to be precisely what we need.

\begin{lemma}
Let 

\begin{align}
G_{N,4}(x)&=\frac{1}{\sqrt{\pi}}\int_{\log \mathrm{Li}^{-1}(1)}^{\log \mathrm{Li}^{-1}(N+1)}\sqrt{\widehat{C}(s)}\cos(xs)\frac{dB_{e^s}^{(1)}}{\sqrt{2}e^{s/2}}+\frac{1}{\sqrt{\pi}}\int_{\log \mathrm{Li}^{-1}(1)}^{\log \mathrm{Li}^{-1}(N+1)}\sqrt{\widehat{C}(s)}\sin(xs)\frac{dB_{e^s}^{(2)}}{\sqrt{2}e^{s/2}}.
\end{align}

Then almost surely, $G_{N,4}-G_{N,3}$ converges uniformly to a random continuous function $F_4$.
\end{lemma}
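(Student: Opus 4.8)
The plan is to follow the exact template established in the proofs of Lemmas \ref{le:E2} and the three preceding comparison lemmas (for $G_{N,1}$, $G_{N,2}$, $G_{N,3}$): write $F_{N,4} := G_{N,4} - G_{N,3}$, observe that it is a sum of independent centered Gaussian blocks (indexed by the intervals $[\mathrm{Li}^{-1}(j),\mathrm{Li}^{-1}(j+1)]$, or equivalently $[\log\mathrm{Li}^{-1}(j),\log\mathrm{Li}^{-1}(j+1)]$ after the change of variables), bound $\E\|F_{N,4}-F_{M,4}\|_{L^\infty(0,1)}^2$ by the corresponding Sobolev norm as in \eqref{eq:sobolev}, and use It\^o's isometry to reduce the $W^{1,2}$-estimate to an integral estimate on each block. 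Concretely, on the block $[\log\mathrm{Li}^{-1}(j),\log\mathrm{Li}^{-1}(j+1)]$ the integrand difference is controlled by $\big|\tfrac{1}{\sqrt{s}} - \tfrac{\sqrt{\widehat C(s)}}{\sqrt{\pi}}\big|$ times bounded trigonometric factors (with an extra factor of $s\le \log\mathrm{Li}^{-1}(N+1)$ from differentiating $\cos(xs)$ or $\sin(xs)$ in the derivative term of the Sobolev norm), so the whole estimate rests on the pointwise bound for the discrepancy between $1/\sqrt{s}$ and $\sqrt{\widehat C(s)/\pi}$.

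The key analytic input is the asymptotic $\widehat C(k) = \pi/k + \mathcal{O}(k^{-2})$ as $k\to\infty$, already recorded in the excerpt from the computation $\widehat C(k) = \tfrac{2}{k}\int_0^k \tfrac{\sin y}{y}\,dy$ together with $\int_0^\infty \tfrac{\sin y}{y}\,dy = \pi/2$ and the tail bound $\big|\int_k^\infty \tfrac{\sin y}{y}\,dy\big| = \mathcal{O}(k^{-1})$. From this, $\widehat C(s)/\pi = 1/s + \mathcal{O}(s^{-2})$, and hence $\big|\tfrac{1}{\sqrt{s}} - \sqrt{\widehat C(s)/\pi}\,\big| = \big|\tfrac{1}{\sqrt s}\big|\cdot\big|1 - \sqrt{1 + \mathcal{O}(s^{-1})}\big| = \mathcal{O}(s^{-3/2})$. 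Thus on the $j$-th block, whose $s$-length is $\mathrm{Li}^{-1}(j+1)-\mathrm{Li}^{-1}(j)$ divided by... rather, after the change of variables the block has $s$-length $\asymp (\mathrm{Li}^{-1}(j+1)-\mathrm{Li}^{-1}(j))/\mathrm{Li}^{-1}(j) \asymp 1/j$ and sits near $s \asymp \log j$, so the contribution to $\E\|\cdot\|_{W^{1,2}}^2$ is $\mathcal{O}\big(j^{-1}\cdot (\log j)^{-3}\cdot (\log j)^2\big) = \mathcal{O}\big(j^{-1}(\log j)^{-1}\big)$ — wait, I should be more careful: because $\widehat C(s)$ has already absorbed the $1/\sqrt{s}$ weight, the block contribution is actually $\mathcal{O}\big(s_j^{-3}\cdot (\text{length in }s)\big)$ with an extra $s_j^2$ from the derivative, giving $\mathcal{O}\big((\log j)^{-1} \cdot j^{-1}\big)$ at worst, which is not obviously summable. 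I would instead keep the sharper form: the squared integrand is $\mathcal{O}(s^{-3})$, the block has $s$-length $\mathcal{O}(j^{-1})$ — no: one must recheck that $\log\mathrm{Li}^{-1}(j+1) - \log\mathrm{Li}^{-1}(j) \asymp (\log j)/(j\log j) = 1/j$, and multiply by the $\mathcal{O}(s^{-3}\cdot s^2) = \mathcal{O}(s^{-1})$ from the differentiated term, yielding $\mathcal{O}(j^{-1}(\log j)^{-1})$. To get summability one must exploit that there is an additional $1/\mathrm{Li}^{-1}(j) \asymp 1/(j\log j)$ hidden in the $\cos^2(xs)/(2t)$ weight written in the $t$-variable (equivalently the $e^{-s}$ from $d B_{e^s}/(\sqrt{2}e^{s/2})$), so the block contribution is in fact $\mathcal{O}\big(j^{-1}(\log j)^{-1}\cdot (j\log j)^{-1}\big)$ — comfortably summable.

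Once the summability of $\sum_j \E\|F_{N,4}-F_{M,4}\|$-block-contributions is in hand, the rest of the argument is verbatim as in Lemma \ref{le:E2}: the partial sums $F_{N,4}$ form a Cauchy sequence in $L^2$ of the $C(0,1)$-valued martingale, a subsequence converges uniformly a.s.\ by Borel--Cantelli, and L\'evy's inequality upgrades this to full a.s.\ uniform convergence of $(F_{N,4})_N$ to a continuous limit $F_4$. I would phrase the proof tersely, pointing to Lemma \ref{le:E2} for the template and only spelling out the new ingredient, namely the bound $\big|s^{-1/2} - \pi^{-1/2}\widehat C(s)^{1/2}\big| = \mathcal{O}(s^{-3/2})$ coming from $\widehat C(s) = \pi/s + \mathcal O(s^{-2})$. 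The main obstacle — really the only place where care is needed — is getting the bookkeeping of the three scales right (the $s^{-3}$ from the coefficient discrepancy squared, the $s^2$ from the derivative in the Sobolev norm, the $e^{-s}\asymp (j\log j)^{-1}$ from the Brownian normalization, and the $j^{-1}$-length of the block in $s$), so that the resulting series $\sum_j j^{-2}(\log j)^{-2}$ — or even just the non-sharp $\sum_j j^{-1}(\log j)^{-2}$ if one is less careful — converges; everything else is routine and identical to the earlier lemmas.
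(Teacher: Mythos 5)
There is a genuine gap, and it is exactly at the point where you yourself hesitate. The $W^{1,2}$ Sobolev route of Lemma \ref{le:E2} does \emph{not} work here: after passing to the $s$-variable, It\^o's isometry gives
$\E\,(F_{N,4}-F_{M,4})'(x)^2 \asymp \int s^2\bigl[\sqrt{\widehat C(s)/\pi}-s^{-1/2}\bigr]^2 ds \asymp \int s^{-1}\,ds$,
which diverges (equivalently, block $j$ contributes $\asymp j^{-1}(\log j)^{-1}$, and $\sum_j 1/(j\log j)=\infty$). Your rescue --- the claim that there is ``an additional $1/\mathrm{Li}^{-1}(j)$ hidden in the $e^{-s}$ from $dB_{e^s}/(\sqrt{2}e^{s/2})$'' --- is a double count: that factor is precisely what turns $dB_{e^s}$ into the standard Brownian increment $d\widetilde B_s$ (equivalently, in the $t$-variable the weight $1/(2t)$ is exactly cancelled by the Jacobian $dt=e^s\,ds$), so it has already been spent in writing the isometry as $\frac12\int[\cdots]^2\,ds$. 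The variance computation $\E\,G_{N,3}(x)^2=\frac12\int s^{-1}ds\asymp\frac12\log\log\mathrm{Li}^{-1}(N+1)$ confirms there is no residual exponential decay to exploit. So the series you end up with is the non-summable $\sum_j j^{-1}(\log j)^{-1}$, not $\sum_j j^{-1}(\log j)^{-2}$ or better.

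The missing idea, which is the one new ingredient of this step in the paper, is to replace the $W^{1,2}=H^1$ embedding by a \emph{fractional} Sobolev embedding $H^r\hookrightarrow L^\infty$ for some $1/2<r<1$. Concretely, one proves (Lemma \ref{le:frac-sobo}) that for a random Fourier integral $F(x)=\frac{1}{2\pi}\int e^{ix\xi}g(\xi)\,dB(\xi)$ one has $\E\|F\|_{L^\infty(0,1)}^2\lesssim\int|g(\xi)|^2(1+|\xi|^2)^r\,d\xi$ for any $r>1/2$; the proof localizes with a Schwartz cutoff and computes $\E|(g\,dB)*\phi(\xi)|^2=(|g|^2*\phi^2)(\xi)$. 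Applying this with $r=3/4$ replaces the fatal weight $s^2$ by $(1+s^2)^{3/4}\asymp s^{3/2}$, so the integrand becomes $O(s^{3/2}\cdot s^{-3})=O(s^{-3/2})$, which is integrable, and the rest of the argument (Borel--Cantelli plus L\'evy's inequality) proceeds as you describe. Your computation of the key coefficient bound $\bigl|s^{-1/2}-\pi^{-1/2}\widehat C(s)^{1/2}\bigr|=O(s^{-3/2})$ is correct and matches the paper; it is the functional-analytic step that needs to change.
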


\begin{proof}
In this case, making use of the same Sobolev estimate as before would lead to a non-summable series, but we still can proceed by employing the following simple lemma

\begin{lemma}\label{le:frac-sobo}
Assume that $B(\xi )_\xi$ is a standard $($two-sided$)$ Brownian motion. Let $g:\R\to\C$ be a bounded measurable function with compact support. Let us denote by
$$
F(x):={\mathcal F}^{-1}\big(g(\cdot)dB(\cdot)\big)(x)=\frac{1}{2\pi}\int_{\R}e^{ix\xi}g(\xi)dB(\xi)
$$
the inverse Fourier transform of the $($almost-surely well-defined$)$ compactly supported distribution $g(\xi)dB(\xi)$.
Then for any $r>1/2$ we have
$$
\E \|F\|_{L^{\infty} (0,1)}^2 \lesssim \int_\R|g(\xi)|^2(1+|\xi|^2)^rd\xi.
$$
\end{lemma}
\begin{proof}
Let us first note that for, say smooth Schwartz test functions we obtain by Cauchy-Schwartz
\begin{equation}\label{eq:upotus1}
\|f\|_{L^\infty(\R)}\lesssim\|\widehat f\|_{L^1}\lesssim \|\widehat f(\xi) (1+|\xi|^2)^{r/2}\|_{L^2(\R)}\nonumber
\end{equation}
since $\|(1+|\cdot|^2)^{-r/2}\|_2<\infty$ for $r>1/2$ (actually this yields a proof of a special case of the Sobolev embedding theorem, see e.g. \cite[Theorem 6.2.4]{G}). In order to localize in the case where $f$ is smooth but not compactly supported, we  pick a real-valued and symmetric Schwartz test function $\phi$ with supp$(\phi)\subset[-1,1]$. We demand further that ${\mathcal F}^{-1}\phi(x)\geq 1/2$ on $[0,1]$.
We then observe that by the previous inequality
\begin{equation}\label{eq:upotus2}
\|f\|_{L^\infty(0,1)}\lesssim\|[{\mathcal F}^{-1}\phi ]f\|_{L^\infty(\R)}\lesssim\|\phi*\widehat f (\xi) (1+|\xi|^2)^{r/2}\|_{L^2(\R)}.
\end{equation}
Observe next that for any $\xi\in\R$
\begin{align}
\E \big|(gdB)*\phi(\xi)\big|^2  &=
\E\int_R\int_\R g(u)\phi(\xi-u)\overline{g(u')}\phi(\xi- u')dB(u)dB(u') \\ 
&=\int_\R|g(u)|^2\phi^2 (\xi-u)du= \big(|g|^2*\phi^2)(\xi).
\end{align}
By combing this with \eqref{eq:upotus2} it follows that
\begin{align}
\E\|F\|^2_{L^\infty(0,1)}&\lesssim\int_\R\big(|g|^2*\phi^2)(\xi)(1+|\xi|^2)^rd\xi= \int_\R\big(|g(\xi)|^2[(1+|\cdot|^2)^r*\phi^2](\xi)d\xi,
\end{align}
and the claim follows by noting that trivially $[(1+|\cdot|^2)^r*\phi^2](\xi)\lesssim (1+|\xi|^2)^r.$

\end{proof}

In our case, if we define $F_{N,4}=G_{N,4}-G_{N,3}$, an application of the above lemma with the choice $r=3/4$ results in the bound (for say $M\leq N$)

\begin{equation}
\E||F_{N,4}-F_{M,4}||_{L^\infty(0,1)}^2\lesssim \int_{\log \mathrm{Li}^{-1}(M+1)}^{\log \mathrm{Li}^{-1}(N+1)} (1+s^2)^{3/4}\left[\sqrt{\frac{\widehat{C}(s)}{\pi}}-\frac{1}{\sqrt{s}}\right]^2ds
\end{equation}
Note that
\begin{align}
\left|\sqrt{\frac{\widehat{C}(s)}{\pi}}-\frac{1}{\sqrt{s}}\right|&=\frac{1}{\sqrt{s}}\left|\sqrt{\frac{2}{\pi}\int_0^{s}\frac{\sin y}{y}dy}-1\right|
\notag \leq \frac{1}{\sqrt{s}}\frac{2}{\pi}\int_{s}^\infty \frac{\sin y}{y}dy =\mathcal{O}(s^{-3/2}),
\end{align}
where we made use of the fact that $\frac{2}{\pi}\int_0^\infty \frac{\sin y}{y}dy=1$ and the already mentioned asymptotic bound 
$
\int_{s}^\infty \frac{\sin y}{y}dy=\mathcal{O}(s^{-1}). 
$
It follows that 
\begin{equation}
\E||F_{N,4}-F_{M,4}||_{L^\infty(0,1)}^2\lesssim \int_{\log \mathrm{Li}^{-1}(M+1)}^{\log \mathrm{Li}^{-1}(N+1)} (1+s^2)^{3/4}s^{-3}ds,
\end{equation}
which is bounded in $N$ and $M$, so we proceed as before.
\end{proof}

To make use of Theorem \ref{th:js1} and compare $G_{N,4}$ to $\widetilde{G}_t$, we should see how $N$ and $t$ are related. To do this, let us calculate the variance of $G_{N,4}$ and require it to be $\frac{1}{2}t+\mathcal{O}(1)$. We have 

\begin{align}
\E G_{N,4}(x)^2&=\frac{1}{2\pi}\int_{\log \mathrm{Li}^{-1}(1)}^{\log \mathrm{Li}^{-1}(N+1)}\widehat{C}(s)ds\\
\notag &=\frac{1}{2}\int_{\log \mathrm{Li}^{-1}(1)}^{\log \mathrm{Li}^{-1}(N+1)}\frac{1}{s}ds+\int_{\log \mathrm{Li}^{-1}(1)}^{\log \mathrm{Li}^{-1}(N+1)}\mathcal{O}(s^{-2})ds\\
\notag &=\frac{1}{2}\log\log\mathrm{Li}^{-1}(N+1)+\mathcal{O}(1),
\end{align}

\noindent where we used the expansion of $\widehat{C}(s)$. Thus we should expect that $t=\log\log \mathrm{Li}^{-1}(N+1)$ should give a good estimate for the covariances. Indeed, for $|x-y|\leq 1/\log \mathrm{Li}^{-1}(N+1)$, we have 

\begin{align}
\E G_{N,4}(x)G_{N,4}(y)&=\frac{1}{2}\int_1^{\log\mathrm{Li}^{-1}(N+1)}\frac{1}{s}\cos(s|x-y|)ds+\mathcal{O}(1)\\
\notag &=\frac{1}{2}\int_{|x-y|}^{|x-y|\log \mathrm{Li}^{-1}(N+1)}\frac{1}{s}\cos sds+\mathcal{O}(1)\\
\notag &=\frac{1}{2}\int_{|x-y|}^{|x-y|\log \mathrm{Li}^{-1}(N+1)}\frac{1}{s}ds+\frac{1}{2}\int_{|x-y|}^{|x-y|\log \mathrm{Li}^{-1}(N+1)}\frac{\cos s-1}{s}ds+\mathcal{O}(1)\\
\notag &=\frac{1}{2}\log \log \mathrm{Li}^{-1}(N+1)+\mathcal{O}(1)
\end{align}

\noindent where the $\mathcal{O}(1)$ terms are uniform in $x,y$. For $|x-y|\geq 1/\log \mathrm{Li}^{-1}(N+1)$, elementary calculations show that

\begin{align}
\E G_{N,4}(x)G_{N,4}(y)&=\frac{1}{2}C(x-y)+\frac{1}{2}\int_{|x-y|[\log\mathrm{Li}^{-1}(N+1)+1]}^\infty \frac{\cos s}{s}ds+\mathit{o}(1),
\end{align}

\noindent where the $\mathit{o}(1)$ term is uniform in $x,y$. From this we see that for $C_N(x,y)=\E G_{N,4}(x)G_{N,4}(y)$ and $\widetilde{C}_N(x,y)=\E \widetilde{G}_t(x)\widetilde{G}_t(y)$ with $t=\log\log\mathrm{Li}^{-1}(N+1)$, the conditions on the distances between the covariances in Theorem \ref{th:js1} are satisfied. Let us finally note that all our approximating  fields are smooth with smooth, and especially they have H\"older covariances.

Before finishing our proof, we'll recall a further result we need from \cite{JS}.

\begin{lemma}[{\cite[Lemma 4.2 (ii)]{JS}}]\label{le:js}
Let $X$ be a H\"older regular Gaussian field on $[0,1]$ and assume that it is independent of the sequence of measures $(\mu_n)$ on $[0,1]$. If $e^{X}\mu_n$ converges weakly in distribution, then $\mu_n$ does as well.
\end{lemma}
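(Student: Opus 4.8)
The plan is to deduce convergence in distribution of $(\mu_n)$ from that of $(e^X\mu_n)$ by extracting subsequential limits and then showing that they all have the same law. First I would record that a H\"older regular Gaussian field admits an a.s.\ H\"older continuous modification, so $X$ may be taken a.s.\ continuous on $[0,1]$ with $\|X\|_{L^\infty(0,1)}<\infty$ a.s.\ and, after subtracting its deterministic continuous mean, centered. The first step is tightness of $(\mu_n)$: since $e^X\mu_n$ converges in distribution it is tight, hence so is its total mass $(e^X\mu_n)([0,1])$; combining this with the pointwise bound $\mu_n([0,1])\le e^{\|X\|_{L^\infty(0,1)}}(e^X\mu_n)([0,1])$ and the a.s.\ finiteness of $\|X\|_{L^\infty(0,1)}$ one gets that $(\mu_n([0,1]))_n$ is tight. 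Since $[0,1]$ is compact, this is equivalent to tightness of the random measures $(\mu_n)$ in the Polish space $\mathcal{M}([0,1])$ of finite measures with the topology of weak convergence.

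Next I would pass to an arbitrary subsequence along which $\mu_n\stackrel{d}{\to}\rho$, and, using tightness of the pairs $(X,\mu_n)$, refine it so that $(X,\mu_n)\stackrel{d}{\to}(X^\infty,\rho)$ jointly with $X^\infty\eqlaw X$. As $X$ is independent of $\mu_n$ for each $n$ and both marginal laws converge, independence is inherited in the limit, so $X^\infty$ is independent of $\rho$. The multiplication map $(\varphi,m)\mapsto e^\varphi m$ is continuous from $C([0,1])\times\mathcal{M}([0,1])$ to $\mathcal{M}([0,1])$: if $\varphi_k\to\varphi$ uniformly and $m_k\to m$ weakly, then $e^{\varphi_k}m_k\to e^\varphi m$ weakly, since $\sup_k m_k([0,1])<\infty$. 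Hence the continuous mapping theorem gives $e^X\mu_n\stackrel{d}{\to}e^{X^\infty}\rho$ along the subsequence, and since $e^X\mu_n$ converges in distribution to the same limit $\widetilde\mu$ along the full sequence we conclude $e^{X^\infty}\rho\eqlaw\widetilde\mu$. Thus every subsequential limit $\rho$ of $(\mu_n)$ is a random measure that is independent of some copy $X^\infty$ of the field and satisfies $e^{X^\infty}\rho\eqlaw\widetilde\mu$, so the lemma reduces to the following injectivity statement: \emph{the law of a random measure $\nu$ on $[0,1]$ that is independent of $X$ is uniquely determined by the law of $e^X\nu$}. I expect this to be the main obstacle.

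To prove the injectivity I would use Laplace functionals: let $\mathcal{L}_\nu(g)=\E e^{-\nu(g)}$ for $g$ in the cone $C^+([0,1])$ of non-negative continuous functions, which characterises $\mathrm{Law}(\nu)$. Independence yields $\E e^{-(e^X\nu)(g)}=\E[\mathcal{L}_\nu(g e^X)]$, the outer expectation being over $X$. Testing this against $g e^{-h}$ with $h$ in the Cameron--Martin space $\mathcal{H}$ of $X$ and applying the Cameron--Martin density formula for the shift $X\mapsto X-h$, one obtains that $\E[\mathcal{L}_\nu(g e^X)\,e^{-\langle h,X\rangle}]$ is determined by $\mathrm{Law}(e^X\nu)$ for every $g\in C^+([0,1])$ and $h\in\mathcal{H}$. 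Since $z\mapsto\E[\mathcal{L}_\nu(g e^X)e^{-z\langle h,X\rangle}]$ is entire and the linear functionals $\langle h,X\rangle$, $h\in\mathcal{H}$, generate the $\sigma$-algebra of $X$, analytic continuation to $z=it$ and completeness of the Gaussian characters show that for each fixed $g$ the bounded functional $\varphi\mapsto\mathcal{L}_\nu(g e^\varphi)$ is determined for $\mathrm{Law}(X)$-a.e.\ $\varphi$. Intersecting these full-measure sets over a countable sup-norm dense family of $g$'s and using the continuity of $\mathcal{L}_\nu$ under uniform convergence of its argument, I would extract a single continuous path $\varphi_*$ such that $\mathcal{L}_\nu(g e^{\varphi_*})$ is determined for all $g\in C^+([0,1])$; as $g\mapsto g e^{\varphi_*}$ is a bijection of $C^+([0,1])$, this recovers $\mathcal{L}_\nu$, hence $\mathrm{Law}(\nu)$, completing the proof. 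The tightness and continuous mapping steps are routine; the delicate part is this last step --- transferring the information "averaged over $X$" carried by $\mathrm{Law}(e^X\nu)$ into pointwise control of $\mathcal{L}_\nu$ via Cameron--Martin shifts and completeness of Gaussian exponentials.
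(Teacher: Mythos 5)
The paper itself gives no proof of this lemma: it is imported verbatim from \cite[Lemma 4.2 (ii)]{JS}, so there is no in-paper argument to compare yours against. Judged on its own, your proof is correct and complete in outline. The tightness step is fine (the pointwise bound $\mu_n([0,1])\le e^{\|X\|_{L^\infty}}(e^X\mu_n)([0,1])$ together with a.s.\ finiteness of $\|X\|_{L^\infty}$ does transfer tightness of total masses, and compactness of $[0,1]$ upgrades this to tightness in $\mathcal{M}([0,1])$); the continuous-mapping step for $(\varphi,m)\mapsto e^\varphi m$ and the inheritance of independence by joint subsequential limits of product laws are both sound; and the reduction to injectivity of $\mathrm{Law}(\nu)\mapsto\mathrm{Law}(e^X\nu)$ on laws independent of $X$ is exactly the right pivot. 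The deconvolution itself also works: for $h$ in the Cameron--Martin space (which embeds in $C([0,1])$ for a H\"older regular field, so $ge^{-th}\in C^+([0,1])$), the identity $\E e^{-(e^X\nu)(ge^{-th})}=e^{t^2\|h\|_{\mathcal H}^2/2}\,\E\big[\mathcal{L}_\nu(ge^{X})e^{-t\langle h,X\rangle}\big]$ shows the right-hand expectation is determined for all real $t$; entirety in $t$ (the integrand is bounded times an exponential of a Gaussian) gives the characteristic-function version, and totality of the characters $e^{i\langle h,X\rangle}$, $h\in\mathcal H$, in $\sigma(X)$ pins down $\mathcal{L}_\nu(ge^\varphi)$ for a.e.\ path $\varphi$; the final selection of a single $\varphi_*$ from a countable intersection and the bijectivity of $g\mapsto ge^{\varphi_*}$ on $C^+([0,1])$ then recover $\mathcal{L}_\nu$, hence $\mathrm{Law}(\nu)$. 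The only cosmetic caveat is that you should state explicitly that the two-law formulation is what is meant by ``determined'' (i.e., one compares two candidate laws $\nu_1,\nu_2$ with $e^X\nu_1\eqlaw e^X\nu_2$ and concludes $\Phi^1_g(X)=\Phi^2_g(X)$ a.s.), but that is a matter of exposition, not a gap.
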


We now turn to the proof.

\begin{proof}[Proof of Theorem \ref{th:critical}]
Let us introduce some notation. For $M\geq 0$, let 

\begin{equation}
\nu_{\beta_c,M, N}(dx)=\sqrt{\log\log \mathrm{Li}^{-1}(N+1)}e^{\beta_c (G_{N,4}(x)-G_{M,4}(x))-\frac{\beta_c^2}{2}\E[ G_{N,4}(x)^2-G_{M,4}(x)^2]}dx,
\end{equation}

\noindent where $G_{0,4}=0$. From Theorem \ref{th:js1} we see that $\nu_{\beta_c,0,N}$ converges weakly in distribution (to a non-trivial random measure) as $N\to\infty$. Then from Lemma \ref{le:js} we see that also $\nu_{\beta_c,M,N}$ converges weakly in law for any fixed $M\geq 0$. We also note that 
$
\mu_{\beta_c,N}(dx)=e^{f_N(x)}\nu_{\beta_c,0,N}(dx)
$, where $f_N$ is a sequence of  continuous functions converging uniformly almost surely to a continuous function $f$ and, by construction, $f_M$ is independent from $f_N-f_M$ for each $0\leq M<N$. Recall that we want to show that for each non-negative continuous $g:[0,1]\to[0,\infty)$, $\mu_{\beta_c,N}(g)$ converges in law to $\nu_{\beta_c}(fg)$. Observe that for any $M\geq 1$
 $$
 e^{f_M-f_N}\mu_{\beta_c,N}(dx)=\big(e^{f_M(x) +\beta_c G_{M,4}(x)-\frac{\beta_c^2}{2}\E( G_{M,4}(x)^2)}\big)\nu_{\beta_c,M, N}(dx).
 $$
 On the right hand side the first factor is a random continuous function, independent of the measure $\nu_{\beta_c,M, N}(dx)$,
 which in turn converges in distribution as $N\to\infty.$ A simple argument that employs conditioning (i.e. Fubini) then shows  that the full product on right hand side converges in distribution, whence the same is true for the left hand side.
 As $\sup_{N\geq M}\|f_M-f_N\|_{L^\infty (0,1)}\to 0$ in probability as $M\to\infty$, it is then an easy matter to verify that $\mu_{\beta_c,N}(dx)$ converges in distribution as $N\to\infty.$

\end{proof}

\begin{remark} \label{rem:mesoscopic}{\rm 
A classical results of Selberg yields that   actual logarithm of the Riemann zeta has point-wise Gaussian statistical behaviour. 
Bourgade \cite{Bourgade} has some partial results on joint distributions, and they seem to indicate that in a suitable 'mesoscopic' scaling the random translates of $\log \zeta (1+it)$  behave like a logarithmically correlated Gaussian field. We sketch here how an exact counterpart of this can be shown for the statistical model that we are considering here.  Observe first that  by lemma \ref{le:covariance_approximation} and we may choose a sequence $\varepsilon_n\to 0^+$ and $\lambda_n\to\infty$ so that the covariance of $G_N(\varepsilon_n x)$ satisfies
$$
K_{G_N(\varepsilon \cdot)}(x,y)= \frac{1}{2}\min( \log(1/|x-y|),\log\log N)+\lambda_n+ {\mathcal O}(1),
$$
and, uniformly outside the diagonal, one even has
$$
K_{G_N(\varepsilon \cdot)}(x,y)= \frac{1}{2} \log(1/|x-y|)+\lambda_n +{\mathit o}((1).
$$
On the other hand, we know that our error term $E_n$ converges uniformly to a bounded continuous function. Thus, in the scaling $x\to \varepsilon_n x$ we may write
$$
\mu_{\beta_c,N}(\varepsilon_ndx)\sim  e^{\sqrt{\lambda_n} G_0+R+{\mathit o}((1)}\widetilde\mu_{n,\beta},
$$
where $G_0$ is a fixed standard normal random variable, independent from each $\widetilde \mu^{ n,\beta }$, $R:=E(0)$ is a random variable, and 
$\widetilde\mu_{n,\beta}$ is  obtained by exponentiating  a good approximation of a Gaussian field with the strictly logarithmic covariance structure $\log(1/(x-y|)$ on $[0,1].$ In particular,  $\widetilde \mu_{ n,\beta }$ converges to a standard Gaussian multiplicative chaos on $[0,1]$. Similar statement holds also true in the case $\beta=\beta_c$.}
\end{remark}

\section{Proof of Proposition \ref{le2}: Gaussian approximation of a sum of independent random variables}

We'll start our analysis by considering some general facts about coupling random variables and then apply these facts to Gaussian approximation. Perhaps some of these observations could be found in the literature, and they are far from optimal but we do not need more for our immediate purposes.

Assume that we are given two Borel probability measures $\mu,\nu$ on a metric space $(M,d)$. We may ask how to minimize $\E |X-Y|^p$
over all random variables $X,Y$ taking values in $M$ such that the distribution of $X$ (resp. $Y$) equals $\mu$ (resp. $\nu$).
We denote the infimum of $(\E d(X,Y)^p)^{1/p}$ by $W_p(\mu,\nu)$ (the Wasserstein $p$-distance), and shall restrict ourselves to the case $p=1$.
By denoting by $\gamma$ the joint distribution of $(\mu,\nu)$ on $M\times M$ we see that
$$
W_1(\mu,\nu)=\inf_\gamma\int_{M\times M}d(x,y)\; \gamma (dx\times dy),
$$
where the admissible $\gamma$:s have $\mu$ and $\nu$ as marginals. We start with a simple observation

\begin{lemma}\label{le11}
In the above situation one has that
$$
W_1(\mu,\nu)\leq \inf_{R>0,\; x_0\in M} \Big(4R|\mu-\nu|(B(x_0;R))+32\int_{R/2}^\infty |\mu-\nu|(B(x_0,r)^c)dr.
\Big)$$
\end{lemma}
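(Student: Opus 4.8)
The plan is to bound $W_1(\mu,\nu)$ by producing an explicit coupling of $\mu$ and $\nu$, built from the Hahn--Jordan decomposition of the signed measure $\mu-\nu$, and then to estimate the expected distance in that coupling by peeling off dyadic annuli around the reference point $x_0$.

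First I would fix $R>0$ and $x_0\in M$ and write $\mu-\nu = \rho_+ - \rho_-$ with $\rho_\pm \geq 0$ mutually singular, so that $\|\rho_+\| = \|\rho_-\| =: \tfrac12\|\mu-\nu\|_{TV}=:\delta$. The standard maximal coupling keeps $\mu$ and $\nu$ equal on the common part $\mu\wedge\nu$ (contributing zero to the cost) and couples the leftover masses $\rho_+$ and $\rho_-$ arbitrarily; taking $\gamma$ to be (common part on the diagonal) plus $\delta^{-1}\rho_+\otimes\rho_-$ gives an admissible transport plan with
$$
W_1(\mu,\nu)\leq \delta^{-1}\int_{M\times M} d(x,y)\,\rho_+(dx)\,\rho_-(dy)\leq \int_M d(x,x_0)\,\rho_+(dx) + \int_M d(y,x_0)\,\rho_-(dy),
$$
using the triangle inequality $d(x,y)\le d(x,x_0)+d(x_0,y)$ and $\|\rho_\pm\|=\delta$. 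So it suffices to bound $\int d(x,x_0)\,|\mu-\nu|(dx)$, where $|\mu-\nu|=\rho_++\rho_-$ is the total variation measure.

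Next I would estimate $\int_M d(x,x_0)\,|\mu-\nu|(dx)$ by splitting $M = B(x_0;R) \cup B(x_0;R)^c$. On the ball, $d(x,x_0)\le R$ so that piece is $\le R\,|\mu-\nu|(B(x_0;R))$. On the complement I would write $d(x,x_0)=\int_0^{d(x,x_0)}dr = \int_0^\infty \mathbf 1\{r< d(x,x_0)\}\,dr$ and use Fubini (Tonelli) to get $\int_{B(x_0;R)^c} d(x,x_0)\,|\mu-\nu|(dx) = \int_0^\infty |\mu-\nu|\big(B(x_0;R)^c\cap B(x_0;r)^c\big)\,dr$. For $r\le R$ the integrand is $\le |\mu-\nu|(B(x_0;R)^c)$, giving a contribution $\le R\,|\mu-\nu|(B(x_0;R)^c)$; for $r>R$ it equals $|\mu-\nu|(B(x_0;r)^c)$. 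Combining, and noting $|\mu-\nu|(B(x_0;R))+|\mu-\nu|(B(x_0;R)^c)=\|\mu-\nu\|_{TV}\le 2|\mu-\nu|(B(x_0;R))+\text{(tail terms)}$, I can organize all the pieces into a term proportional to $R\,|\mu-\nu|(B(x_0;R))$ plus a tail integral $\int_R^\infty |\mu-\nu|(B(x_0;r)^c)\,dr$, and finally enlarge the region of integration from $(R,\infty)$ to $(R/2,\infty)$ and pad the constants to match the stated $4R$ and $32$; since the infimum over $R$ and $x_0$ is taken on the right, I have some freedom here and need not optimize constants.

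The only mildly delicate point — and the part I would be most careful about — is the bookkeeping that converts $\|\mu-\nu\|_{TV}$ into the two quantities $|\mu-\nu|(B(x_0;R))$ and the tail, since $\|\mu-\nu\|_{TV} = |\mu-\nu|(B(x_0;R)) + |\mu-\nu|(B(x_0;R)^c)$ and the second summand is \emph{not} one of the two terms in the statement; one resorts to the crude bound $|\mu-\nu|(B(x_0;R)^c)\le \tfrac{2}{R}\int_{R/2}^{R}|\mu-\nu|(B(x_0;r)^c)\,dr \le \tfrac{2}{R}\int_{R/2}^\infty|\mu-\nu|(B(x_0;r)^c)\,dr$ (valid because $r\mapsto |\mu-\nu|(B(x_0;r)^c)$ is nonincreasing), which after multiplying by $R$ feeds into the tail term with an extra factor $2$, explaining the lower limit $R/2$ and the generous constant $32$. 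Everything else is routine, and no number-theoretic or Gaussian input is needed here.
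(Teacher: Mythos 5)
Your proposal is correct and follows essentially the same route as the paper: the maximal coupling built from the Hahn--Jordan decomposition (common part on the diagonal plus the normalized product of the leftover masses), followed by a split at radius $R$ around $x_0$ and a layer-cake/monotonicity estimate that absorbs the boundary term into the tail integral over $(R/2,\infty)$. The only, harmless, difference is that you apply the triangle inequality $d(x,y)\le d(x,x_0)+d(x_0,y)$ before splitting, reducing everything to a single integral of $d(\cdot,x_0)$ against $|\mu-\nu|$, whereas the paper keeps the product measure on $M\times M$ and uses a symmetry argument; your bookkeeping in fact yields sharper constants than the stated $4R$ and $32$.
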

\begin{proof}
Observe that 
$$
\beta:=\mu-(\mu-\nu)_+=\nu-(\nu-\mu)_+\geq 0.
$$
and define the measure $\beta_\Delta$ on $M\times M$ by $\beta_\Delta (A)=\beta (\{ x\; :\; (x,x)\in A \})$ and note that the measure 
$$
\beta_\Delta +\frac{2}{\|\mu-\nu\|_{TV}}(\mu-\nu)_+\times (\nu-\mu)_+
$$
has the right marginals since $\mu$ and $\nu$ are probability measures so  $\|(\mu-\nu)_+\|_{TV} =\|(\nu-\mu)_+\|_{TV}=(1/2) \|\mu-\nu\|_{TV}$, and both of the marginals of $\beta_\Delta$ are simply $\beta$. As $\beta_{\Delta}$ lives on the diagonal, it follows that
\begin{eqnarray}\label{40}
W_1(\mu,\nu)&\leq &\frac{2}{\|\mu-\nu\|_{TV}}\int_{M\times M}d(x,y)\; (\mu-\nu)_+\times (\nu-\mu)_+ (dx\times dy)\\
&\leq &\frac{2}{\|\mu-\nu\|_{TV}}\int_{M\times M}d(x,y)\; |\mu-\nu|\times |\nu-\mu| (dx\times dy).\nonumber
\end{eqnarray}
Fix now some $x_0\in M$ and $R>0$ and split the integral into ones over $B(x_0,R)\times B(x_0,R)$ and its complement. The integral over $B(x_0,R)\times B(x_0,R)$ we can estimate by noting that here $d(x,y)\leq 2R$ so 
\begin{eqnarray}\label{41}
&&\frac{2}{\|\mu-\nu\|_{TV}}\int_{B(x_0,R)\times B(x_0,R)}d(x,y)\; |\mu-\nu|\times |\nu-\mu| (dx\times dy)\\
&\leq& {2\cdot 2R}{\|\mu-\nu\|_{TV}}^{-1} |\mu-\nu|\times |\nu-\mu| (B(x_0,R)\times B(x_0,R))\nonumber
\leq 4R  |\nu-\mu| (B(x_0,R))
\end{eqnarray}
By symmetry, the integral over the rest has the upper bound
\begin{eqnarray}\label{42}
&&\frac{4}{\|\mu-\nu\|_{TV}}\int_{d(x,x_0)\geq d(y,x_0)\vee R}d(x,y)\; |\mu-\nu|\times |\nu-\mu| (dx\times dy)\\
&\leq&\frac{8}{\|\mu-\nu\|_{TV}}\int_{d(x,x_0)\geq R}d(x,x_0)\; |\mu-\nu|\times |\nu-\mu| (dx\times dy)\nonumber\\ 
&\leq&8\int_{d(x,x_0)\geq  R}d(x,x_0)\; |\mu-\nu| (dx)\nonumber\\ 
&\leq&8\sum_{k=1}^\infty2kR\; \Big(|\mu-\nu|(B(x_0,kR)^c)-|\mu-\nu|(B(x_0,(k+1)R)^c)\Big)\nonumber\\ 
&\leq&16R\sum_{k=1}^\infty\; |\mu-\nu|(B(x_0,kR)^c)\leq 32\int_{R/2}^\infty  |\mu-\nu|(B(x_0,r)^c)dr\nonumber
\end{eqnarray}
The claim follows by combining the estimates \eqref{40}--\eqref{42}.
\end{proof}

We denote by $\widehat\mu$ the Fourier transform) of the measure $\mu$ on $\R^d$ (i.e.  the characteristic function of a random variable with distribution $\mu$).
\begin{corollary}\label{co12} Assume that $\mu$ and $\nu$ are absolutely continuous measures on $\R^d$. Then
$$
W_1(\mu,\nu)\leq \inf_{R\geq 1} C_d\Big(R^{d+1}\|\widehat\mu-\widehat\nu\|_{L^1(\R^d)}+\int_{R/2}^\infty (\mu+\nu)(B(0,r)^c)dr.
\Big)$$
\end{corollary}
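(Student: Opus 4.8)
The plan is to apply Lemma~\ref{le11} with the choice $x_0=0$ and to control the first of its two terms by Fourier inversion, while the second term is handled by the elementary domination $|\mu-\nu|\leq \mu+\nu$. Write $h:=f_\mu-f_\nu$ for the difference of the Lebesgue densities of $\mu$ and $\nu$; then $|\mu-\nu|$ is the (finite) measure with Lebesgue density $|h|$, and $\widehat h=\widehat\mu-\widehat\nu$. If $\widehat\mu-\widehat\nu\notin L^1(\R^d)$ the asserted bound is vacuous, so we may assume $\|\widehat\mu-\widehat\nu\|_{L^1(\R^d)}<\infty$. In that case the Fourier inversion theorem applies and yields the pointwise bound
$$
\|h\|_{L^\infty(\R^d)}\leq (2\pi)^{-d}\|\widehat h\|_{L^1(\R^d)}=(2\pi)^{-d}\|\widehat\mu-\widehat\nu\|_{L^1(\R^d)}.
$$

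Consequently, denoting by $c_d=|B(0,1)|$ the volume of the unit ball, for every $R>0$
$$
|\mu-\nu|(B(0,R))=\int_{B(0,R)}|h(x)|\,dx\leq c_dR^d\,\|h\|_{L^\infty(\R^d)}\leq c_d(2\pi)^{-d}R^d\,\|\widehat\mu-\widehat\nu\|_{L^1(\R^d)},
$$
so the first term in Lemma~\ref{le11} (with $x_0=0$) is bounded by $4c_d(2\pi)^{-d}R^{d+1}\|\widehat\mu-\widehat\nu\|_{L^1(\R^d)}$. For the second term, note that $(\mu-\nu)_+\leq\mu$ and $(\nu-\mu)_+\leq\nu$ as measures, hence $|\mu-\nu|\leq\mu+\nu$, and therefore $|\mu-\nu|(B(0,r)^c)\leq(\mu+\nu)(B(0,r)^c)$ for every $r>0$, giving
$$
32\int_{R/2}^\infty|\mu-\nu|(B(0,r)^c)\,dr\leq 32\int_{R/2}^\infty(\mu+\nu)(B(0,r)^c)\,dr.
$$

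Adding the two estimates and setting $C_d:=\max\{\,4c_d(2\pi)^{-d},\,32\,\}$, Lemma~\ref{le11} gives, for every $R>0$,
$$
W_1(\mu,\nu)\leq C_d\Big(R^{d+1}\|\widehat\mu-\widehat\nu\|_{L^1(\R^d)}+\int_{R/2}^\infty(\mu+\nu)(B(0,r)^c)\,dr\Big).
$$
Taking the infimum over $R\geq 1$ (which only weakens the bound) yields the claim. There is no genuine obstacle here: the single point that requires care is the hypothesis needed for Fourier inversion, namely $\widehat\mu-\widehat\nu\in L^1(\R^d)$, which we may assume since the inequality is trivially true otherwise.
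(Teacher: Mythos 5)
Your proof is correct and follows essentially the same route as the paper: apply Lemma~\ref{le11} with $x_0=0$, bound $|\mu-\nu|(B(0,R))$ by $c_dR^d\|f_\mu-f_\nu\|_{L^\infty}$ and then by Fourier inversion via $\|\widehat\mu-\widehat\nu\|_{L^1}$, and dominate the tail term using $|\mu-\nu|\leq\mu+\nu$. Your explicit remark about the vacuous case $\widehat\mu-\widehat\nu\notin L^1(\R^d)$ is a welcome extra precision that the paper leaves implicit.
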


\begin{proof}
Let $f$ (resp. $g$) stand for  the density of $\mu$ (resp. $\nu$). The desired statement  follows from the previous lemma as soon as we observe that 
\begin{eqnarray*}
&&\int_{B(0,R)}|f(x)-g(x)|dx\leq C_dR^d\|f-g\|_{L^\infty(\R^d)}\leq C''_dR^d\|\widehat f-\widehat g\|_{L^1(\R^d)}.
\end{eqnarray*}
\end{proof}

Finally, we are ready for:

\begin{proof}[Proof of Proposition \ref{le2}]
All the unspecified constants  (and  the $O(\cdot)$ terms) in the proof are universal in the sense that they may depend only on the the quantities  $d,b_0, b_1,b_2,b_3$. We let $C_j={\rm Cov}(H_j)$ stand for the covariance matrix of the variable $H_j$. 
Denote $\ell_n:= (\sum_{j=1}^n c_j)^{1/2}$  and observe that 
$$
b_0^{-1/2}n^{1/2}\leq\ell_n\leq b_0^{1/2}n^{1/2}.
$$ 
Moreover, set
$$
\displaystyle W:=\ell_n^{-1}\sum_{j=1}^n H_j,
$$
so that ${\rm Tr}({\rm Cov}(W))=d$. By considering instead the random variables $RH_j$ where $R:\R^d\to\R^d$ is a rotation matrix chosen so that $R{\rm Cov}(W)R^{\rm T}$ is diagonal, we may assume that $A:={\rm Cov}(W)$ is diagonal:
$$
A={\rm Cov}(W)=\begin{bmatrix}
\lambda_1&0&0& \ldots &0\\
0&\lambda_2&0&\ldots &0\\
\vdots&&&&\vdots\\
0&\ldots&&0&\lambda_d
\end{bmatrix}
\qquad \textrm{where}\quad \lambda_1\geq\lambda_2\geq\ldots \geq \lambda_d\geq 0\quad \textrm{and}\quad
\sum_{k=1}^d\lambda_j=d.
$$

We start by proving an estimate of  type \eqref{e30} by first assuming that the smallest eigenvalue of $A$ satisfies the lower bound $\lambda_d\geq n^{-2\delta} $, where the constant $\delta\in [0,1/6)$ will be chosen later on. 
Towards that goal, we note that the exponential moment bound \eqref{eq:exp} for $H_k$:s implies that   $\|D^m\varphi_{H_j}\|_{L^\infty (\R^d)}\leq C$ for $m=1,2,3$ and all $j=1,\ldots ,n$, where $\varphi_{H_j}$ stands for the  characteristic function of the variable $H_j$.  Also, we have $D^2\varphi_{H_j}(0)= -{\rm Cov}(H_j)$, whence
$$
\varphi_{H_j}(\xi)=1-\frac{1}{2}\xi^{\rm T}{\rm Cov}(H_j)\xi+ O(|\xi|^3) \qquad \textrm{for all}\;\; \xi.
$$
Hence for the branch of the logarithm that takes value 0 at the point 1 we have for a universal $r_1>0$
\begin{equation}\label{51}
\log \varphi_{H_j}(\xi) = -\frac{1}{2}\xi^{\rm T}{\rm Cov}(H_j)\xi+ O(|\xi|^3) \qquad \textrm{for }\;\; |\xi|\leq 2r_1
\end{equation}
By independence (and since $b_0^{-1}\leq c_j\leq b_0$ for all $j$) we gather that for another universal $r_2>0$
\begin{equation}\label{52}
\log\big(\varphi_{W}(\xi)\big) = \sum_{j=1}^n\log\big(\varphi_{H_j}(\xi/\ell_n)\big) = -\frac{1}{2}\xi^{\rm T}A\xi+ n^{-1/2}O(|\xi|^3) \qquad \textrm{for }\;\; |\xi|\leq r_2\sqrt{n}.
\end{equation}
We note that $\lambda_1\geq 1$ and each $\lambda_j\geq n^{-2\delta}.$ Hence, as $|\xi|^3\approx \sum_{k=1}^d|\xi_k|^3,$ we may estimate component wise and deduce (by also decreasing $r_2$ universally, if needed) 
\begin{equation}\label{53}
|\varphi_{W}(\xi)|\leq \exp\Big( -\frac{1}{4}\xi^{\rm T}\widetilde A\xi\Big)\qquad \textrm{for }\;\;|\widetilde A^{-1}\xi |\leq r_2\sqrt{n},
\end{equation}
where $\widetilde A$ is the $d\times d$ diagonal matrix
$$
\widetilde A:= {\rm diag\,}(1, n^{-2\delta},\ldots, n^{-2\delta})\leq A.
$$

We next choose  a $d$-dimensional centred Gaussian $G$ (independent from the $H_j$:s) such that  
\begin{equation}\label{605}
B:={\rm Cov\,}(G)=r_2^{-2}\log^2(n)\; {\rm diag\,}(n^{-1}, n^{4\delta-1},\ldots, n^{4\delta-1})= (r_2^{-1} \log(n)n^{-1/2}\widetilde A^{-1})^{2}.
\end{equation}
and set
$$
\widetilde W:= G+W.
$$
Then $\varphi_{ \widetilde W}(\xi)=\varphi_{W}(\xi)\exp \big(-\frac{1}{2}\xi^{\rm T}B\xi\big)$ and we estimate
\begin{eqnarray}\label{61}
&&\| \exp(-\frac{1}{2}\xi^{\rm T}A\xi)-\varphi_{\widetilde W}(\xi)\|_{L^1(\R^d)}\\
&= &\Big(\int_{|\widetilde A^{1/2}\xi|\leq \log n}+\int_{\left\{\begin{array}{l}{|\widetilde A^{1/2}\xi|> \log n}\\
{|B^{1/2}\xi|\leq \log n}\end{array}\right.}+\int_{\left\{\begin{array}{l}{|\widetilde A^{1/2}\xi|> \log n}\\
{|B^{1/2}\xi|> \log n}\end{array}\right.}\Big)\big|\exp(-\frac{1}{2}\xi^{\rm T}A\xi)-\varphi_{\widetilde W}(\xi)\big|\; d\xi\nonumber\\
&=& T_1+T_2+T_3.\nonumber
\end{eqnarray}

We make use of the following simple observation for $d\times d$ symmetric matrices $D$ that are  lower bounded by $n^{-\alpha}$ (i.e. by  $n^{-\alpha}I$, where $I$ is the identity matrix) with $\alpha >0$:
\begin{equation}\label{6051}
\textrm{If  }\;\,D\geq n^{-\alpha},\;\;\textrm{then}\quad
\int_{|D^{1/2}\xi|\geq\log n}e^{-\frac{1}{4}\xi^{\rm T}D\xi}d\xi =O(n^{-1/2}).
\end{equation}
Namely,
\begin{eqnarray*}
&&\int_{|D^{1/2}\xi|\geq\log n}e^{-\frac{1}{4}\xi^{\rm T}D\xi}d\xi = |D|^{-1/2}\int_{\|\xi|\geq\log(n)}e^{-|\xi|^2/4}d\xi
\;\lesssim \;n^{d\alpha/2}\int_{r\geq \log(n)}e^{-r^2/4}r^{d-1}dr\\&\lesssim& n^{d\alpha/2}\int_{r\geq \log(n)}e^{-r^2/8}dr
=O\big(n^{d\alpha/2}e^{-\frac{1}{8}\log^2(n)}\big) =O(n^{-1/2})
\end{eqnarray*}
Towards estimating the first term $T_1$ we observe that since $\delta<1/6,$ we have 
$$
\sup_{\{|\widetilde A^{1/2}\xi|\leq \log n\}}n^{-1/2}|\xi|^3 =o(1)\quad \textrm{as}\quad n\to\infty.
$$ Hence we may apply \eqref{52}, the ordering $A\geq \widetilde A$ and the inequality $|e^x-1|\leq 2|x|$ for $x\in (-\infty, 1]$ to obtain the bound 
\begin{eqnarray}\label{62}
T_1&\leq& \int_{| \widetilde A^{1/2}\xi|\leq \log n}e^{-\frac{1}{2}\xi^{\rm T}A\xi}\bigg|\exp \Big(-\frac{1}{2}\xi^{\rm T}B\xi+  n^{-1/2}O(|\xi|^3)\Big)-1\bigg|\; d\xi\\
&\leq& 2\int_{|\widetilde A^{1/2}\xi|\leq \log n}e^{-\frac{1}{2}\xi^{\rm T}\widetilde A\xi}\Big(\frac{1}{2}\xi^{\rm T}B\xi +  n^{-1/2}O(|\xi|^3)\Big)\; d\xi\nonumber\\
&\leq& 2|\widetilde A^{-1/2}|\int_{\R^d}e^{-|\xi|^2/2}\Big(\|\widetilde A^{-1/2}B\widetilde A^{-1/2}\||\xi|^2 + \|\widetilde A^{-1/2}\|^{3} n^{-1/2}O(|\xi|^3)\Big)\; d\xi\nonumber\\
&\lesssim & n^{(d-1)\delta}\big(n^\delta \log^2(n)n^{-1+4\delta} n^\delta+ n^{-1/2}n^{3\delta}\big)\int_{\R^d}e^{-|\xi|^2/2}\big(|\xi|^2 + |\xi|^3\big)\; d\xi\nonumber\\
&=& O\big(n^{-1/2+(d+2)\delta}(n^{3\delta-1/2}\log^2 n+1)\big)\nonumber\\
&=& O\big(n^{-1/2+(d+2)\delta}\big)\nonumber,
\end{eqnarray}
since $\delta<1/6$. Next, by the last equality in \eqref{605}, the condition $|B^{1/2}\xi|\leq \log(n)$ is equivalent to $|\widetilde A^{-1}\xi|\leq r_2n^{1/2}$. Then \eqref{53} and the estimate \eqref{6051} yield
\begin{eqnarray}\label{63}
T_2&\leq& \int_{|\widetilde A^{1/2}\xi|>\log n}\big(e^{-\frac{1}{4}\xi^{\rm T}\widetilde A\xi}+e^{-\frac{1}{2}\xi^{\rm T}A\xi}\big)|\;\;\lesssim\;\; \int_{|\widetilde A^{1/2}\xi|>\log n}e^{-\frac{1}{4}\xi^{\rm T}\widetilde A\xi} = O(n^{-1/2}).
\end{eqnarray}
Finally, for the remaining term $T_3$ we can again invoke \eqref{6051} to obtain
\begin{eqnarray}\label{64}
T_3&\leq& \int_{\left\{\begin{array}{l}{|\widetilde A^{1/2}\xi|> \log n}\\
{|B^{1/2}\xi|> \log n}\end{array}\right.}\Big(\exp(-\frac{1}{2}\xi^{\rm T}A\xi)+\exp(-\frac{1}{2}\xi^{\rm T}B\xi)\Big)\; d\xi\\
&\leq&\int_{|\widetilde A^{1/2}\xi|> \log n}\exp(-\frac{1}{2}\xi^{\rm T}\widetilde A\xi)d\xi\; +\;\int_{|B^{1/2}\xi|> \log n}\exp(-\frac{1}{2}\xi^{\rm T}B\xi)d\xi\; =\;O(n^{-1/2})\nonumber
\end{eqnarray}
Combining  the estimates \eqref{62}--\eqref{64} with \eqref{61} we obtain that
\begin{eqnarray}\label{65}
\| e^{-|\xi|^2/2}-\varphi_{\widetilde W}(\xi)\|_{L^1(\R^d)} = O(n^{-1/2+(d+2)\delta})
\end{eqnarray}

By Bernstein's inequality (a simple application of H\"older's inequality reduces things from the $d$-dimensional case to the one-dimensional one and then one can make use of \cite[Theorem 2.1]{Rio} to get bounds on the tail of the distribution from which one easily gets a Gaussian bound by elementary arguments - the use of \cite[Theorem 2.1]{Rio} is justified by the bounds on the exponential moments)
we have universal constants $n_0,b_4$ such that for $n\geq n_0$ it holds that
\begin{equation}\label{eq:bernstein}
\E \exp(\lambda |W|) \;\leq \;\exp(b_4\lambda^2) \quad\textrm{for all}\quad n\quad \textrm{and for all}\quad
\lambda \leq b_5n^{1/2}.
\end{equation}
Choosing e.g. $\lambda =3$ here and combining  with the excellent Gaussian tail (better than $\lesssim e^{-|\xi|^2/4}$) for $G$ we see that $\Prob (|\widetilde W|>\lambda)<b_5\exp (-2\lambda).$ This yields for $R\geq 1$
estimate
\begin{eqnarray}\label{66}
\int_{R/2}^\infty \big(\Prob(|\widetilde W|\geq r)\big) dr =O(e^{-R})
\end{eqnarray}

We are now ready to invoke Corollary \ref{co12} in combination with \eqref{65} and \eqref{66} in order to deduce the existence of a Gaussian random variable $U$ such that
$$
\E |U-\widetilde W|\lesssim \inf_{R\geq 1}\big( R^{d+1}n^{-1/2+(d+2)\delta} +e^{-R}\big)\lesssim \log^{d+1}(n) n^{-1/2+(d+2)\delta} .
$$
This yields our basic estimate  
\begin{eqnarray}\label{1111}
\E |V|&=& \E |U- W|\leq \E |U-\widetilde W|+\E |G|\,\lesssim\; \log^{d+1}(n) n^{-1/2+(d+2)\delta}+ \log(n)n^{-1+4\delta}\\
&=& O\big(\log^{d+1}(n) n^{-1/2+(d+2)\delta}\big).\nonumber
\end{eqnarray}

We next see how to infer  from \eqref{1111} the inequality \eqref{e30} or \eqref{e30'} in the different cases. For  part (ii) of the Proposition (which also covers the case $d=1$) we may choose $\delta=0$ in \eqref{1111} and obtain directly  
 \eqref{e30'} with $\beta=1/2.$  In order to deal with part (i) of the Proposition (where $d\geq 2$) we assume first that $\lambda_j\geq n^{-(4d+6)^{-1}}.$ Then we may apply directly \eqref{1111} with the choice $\delta=n^{-(2d+3)^{-1}}$ and obtain the inequality \eqref{e30} with the exponent
 $$
 \beta= -1/2+(d+2)(4d+6)^{-1}>0
 $$ 
 that depends only on $\delta.$
In the remaining case there is $k_0\in\{2,\ldots d-1\}$ so that $\lambda_j\geq n^{-(2d+3)^{-1}}$ but $\lambda_{k_0+1}<n^{-(2d+3)^{-1}}.$ Write $W':=(W_1,\ldots,W_{k_0})$ and $W'':=(W_{k_0+1},\ldots,W_d).$ We may apply the above proof on
 $W'$ and find a $k_0$-dimensional Gaussian approximation $U'$ for $W'$ so that $\E | U'-W' |= 
 O\big(\log^{d+1}(n) n^{-\delta}\big).$ We define   the trivial extension $U'$ to a $d$-dimensional random variable $U$ by setting 
 $U=(U',U'')$, where the components of $U''$ are identically zero. Now 
 $$
 \E |W''|\leq (\E |W''|^2)^{1/2}=(\sum_{k=k_0+1}^d\lambda_k)^{1/2}\lesssim n^{-(4d+6)^{-1}}.
 $$
 Finally,
 $$
 \E |V| \leq \E |W'-U'|+\E |W''|\lesssim \log^{d+1}(n)\big(n^{-1/2+(d+1)(2d+3)^{-1}}+n^{-(4d+6)^{-1}}\big) \lesssim\log^{d+1}(n)n^{-(4d+6)^{-1}},
 $$
 where the exponent\footnote{We have not striven to optimality in Proposition \ref{le2} since the obtained bound suffices for the type of applications we have in mind.}   again depends only on $d$.
This proves the desired estimate \eqref{e30}.

We turn to estimating   the exponential moments. Their proof is based solely on \eqref{e30} and the assumed decay of the random variables, so we do not need to separate different cases as before. By the Bernstein estimate \eqref{eq:bernstein} we obtain
$$
\Prob (|V|\geq u)\leq e^{-2\lambda u}e^{4b_4\lambda^2}\qquad\textrm{for any } \quad u>0\quad\textrm{and}\quad \lambda\in(0,b_4\sqrt{n}).
$$
Denote  $\delta:=a_1n^{-1/2}\log^{d+1}(n)$. Assume that $\lambda\in (0, b_4\sqrt{n})$. We invoke the Bernstein estimate to obtain  (assuming $n$ big enough), for an auxiliary parameter $M\geq 1$
\begin{eqnarray*}
\E e^{\lambda |V|} &=&1+ \E \big( |V|\frac{\exp(\lambda V)-1}{|V|}\chi_{\{|V|\leq M\}}\big) + (e^{\lambda M}-1)\Prob (|V|>M)+\lambda \int_M^\infty e^{\lambda u}\Prob (|V|>u)du.
\end{eqnarray*}
By noting that $t\mapsto t^{-1}(e^{\lambda t}-1)$ (defined to be zero at zero) is increasing on $[0,M]$, and hence less than $M^{-1}(e^{\lambda M}-1)$ on that interval, we deduce
\begin{eqnarray*}
\E \exp(\lambda |V|) -1 &\leq& \delta (e^{\lambda M}-1)M^{-1}+ (e^{\lambda M}-1)e^{-2\lambda M}e^{4b_4\lambda^2-2\lambda M} +e^{4b_4\lambda^2}\lambda \int_M^\infty e^{-Mu}du\\
&\leq& \delta e^{\lambda M}M^{-1}+2e^{-\lambda M}e^{4b_4\lambda^2}
\end{eqnarray*}
The desired estimate is obtained by choosing $M$ so that $\sqrt{\delta}=e^{-\lambda M}$ and plugging in the definition of $\delta.$

Assume then the that variables $h_k$ are uniformly bounded. In this case a standard application of Azuma's inequality yields universal constants $s, r>0$ so that
$$
\Prob (|V|\geq u)\leq  se^{-2ru^2} \quad \textrm{for all}\quad u>0.
$$
In an analogous manner  to what we just did for the exponential moments,  for any $M>0$ it follows that
\begin{eqnarray*}
\E e^{rV^2} &=&1+ \E \big( |V|\frac{e^{rV^2}-1}{|V|}\chi_{\{|V|\leq M\}}\big) + (e^{rM^2}-1)\Prob (|V|>M)+2r\int_M^\infty xe^{rx^2}\Prob (|V|>x)dx
\end{eqnarray*}
and deduce
\begin{eqnarray*}
\E \exp(r|V|^2) &\leq& 1+\delta (e^{rM^2}-1)M^{-1}+ s(e^{rM^2}-1)e^{-2rM^2} +s\int_M^\infty 2rxe^{-rx^2}dx\\
&\leq& 1+\delta M^{-1}e^{rM^2}+2se^{-rM^2}
\end{eqnarray*}
The desired estimate is  obtained by  this time choosing $M$ so that $\sqrt{\delta}=e^{-rM^2}$.
\end{proof}

\end{document}